\documentclass[11pt]{article}

\usepackage[a4paper,margin=1in]{geometry}
\usepackage{amsmath,amssymb,amsthm,mathtools}
\usepackage{bm}
\usepackage{aliascnt}
\usepackage{enumitem}
\usepackage{microtype}
\usepackage{booktabs}
\usepackage{hyperref}
\usepackage[nameinlink,capitalize,noabbrev]{cleveref}

\hypersetup{
  colorlinks=true,
  linkcolor=blue,
  citecolor=red,
  urlcolor=blue,
  pdftitle={A Critical Density Estimate for the Vlasov--Poisson System: Energy Conservation and Moment Propagation},
  pdfauthor={Zhaopeng Wang}
}

\numberwithin{equation}{section}
\allowdisplaybreaks[2]

\newtheorem{theorem}{Theorem}[section]

\newaliascnt{proposition}{theorem}
\newtheorem{proposition}[proposition]{Proposition}
\aliascntresetthe{proposition}

\newaliascnt{lemma}{theorem}
\newtheorem{lemma}[lemma]{Lemma}
\aliascntresetthe{lemma}

\newaliascnt{corollary}{theorem}
\newtheorem{corollary}[corollary]{Corollary}
\aliascntresetthe{corollary}

\theoremstyle{definition}
\newaliascnt{definition}{theorem}
\newtheorem{definition}[definition]{Definition}
\aliascntresetthe{definition}

\theoremstyle{remark}
\newaliascnt{remark}{theorem}
\newtheorem{remark}[remark]{Remark}
\aliascntresetthe{remark}

\crefname{theorem}{Theorem}{Theorems}
\crefname{proposition}{Proposition}{Propositions}
\crefname{lemma}{Lemma}{Lemmas}
\crefname{corollary}{Corollary}{Corollaries}
\crefname{definition}{Definition}{Definitions}
\crefname{remark}{Remark}{Remarks}

\newcommand{\R}{\mathbb{R}}
\newcommand{\T}{\mathbb{T}}
\newcommand{\M}{\mathcal{M}}
\newcommand{\Kcal}{\mathcal{K}}
\newcommand{\Pcal}{\mathcal{P}}
\newcommand{\Xcal}{\mathcal{X}}
\newcommand{\Ecal}{\mathcal{E}}
\newcommand{\Kstar}{\mathcal{K}_*}
\newcommand{\Div}{\operatorname{Div}}
\newcommand{\diver}{\operatorname{div}}
\newcommand{\Tr}{\operatorname{Tr}}
\newcommand{\tr}{\operatorname{tr}}
\newcommand{\Sym}{\operatorname{Sym}}
\newcommand{\supp}{\operatorname{supp}}
\newcommand{\dd}{\,\mathrm{d}}
\newcommand{\one}{\mathbf{1}}
\newcommand{\Om}{\Omega}
\newcommand{\rhos}{\rho^{\circ}}
\newcommand{\lamOm}{\lambda_{\Om}}
\newcommand{\delOm}{\delta_{\Om}}

\title{A Critical Density Estimate for the Vlasov--Poisson System:\\
Energy Conservation and Moment Propagation}

\author{
Zhaopeng Wang\\[0.4em]
\small School of Mathematics and Statistics\\
\small Beijing Institute of Technology\\
\small Beijing 100081, China\\[0.3em]
\small\texttt{zpwang@bit.edu.cn}
}
\date{}

\begin{document}
\maketitle

\begin{abstract}
This paper establishes a critical space--time density estimate for the
three-dimensional Vlasov--Poisson system in the whole space and on the
torus. This estimate has two consequences. First, every nonnegative weak
solution with finite initial kinetic energy in the bounded finite-energy
weak-solution class conserves the total energy. This gives a positive
answer, within this class, to the energy-conservation question raised by
Lions and Perthame \cite{LionsPerthame1991}. Second, in the periodic repulsive case, it yields
propagation of every velocity moment of order \(k>2\), thereby closing the
previously unresolved interval \(2<k\leq3\).
\end{abstract}

\noindent\textbf{Keywords.} Vlasov--Poisson system; compensated integrability;
moment propagation; weak solution; energy conservation.

\medskip
\noindent\textbf{2020 Mathematics Subject Classification.} 35Q49, 35Q83,
82C40, 82C70.

\section{Introduction}

\subsection{The Vlasov-Poisson system}
The Vlasov--Poisson system is a basic mean-field model for the evolution
of a collisionless ensemble of particles. It describes plasmas when the
self-consistent interaction is repulsive and stellar systems when it is
attractive. We consider the system in a spatial domain
\(\Omega\in\{\R^3,\T^3\}\), where
\(\T^3=\R^3/\mathbb Z^3\) is the unit-volume flat torus. The unknown
\(f=f(t,x,v)\geq0\) denotes the phase-space density of particles at time
\(t\in\R^+\), position \(x\in\Omega\), and velocity \(v\in\R^3\). The
initial-value problem reads
\begin{equation}\label{eq:VP-system}
 \begin{aligned}
  &\partial_t f+v\cdot\nabla_xf+E\cdot\nabla_vf=0,
  \qquad f|_{t=0}=f_0,\\
  &\rho(t,x)=\int_{\R^3}f(t,x,v)\dd v,
  \qquad E=-\nabla U,
  \qquad -\Delta U=\lamOm\gamma\rhos.
 \end{aligned}
\end{equation}
Here \(f_0=f_0(x,v)\geq0\) is the prescribed initial phase-space
density, while \(\rho(t,x)\) is the associated spatial particle
density. The functions \(U=U(t,x)\) and \(E=E(t,x)\) denote,
respectively, the self-consistent potential and force field. The parameter \(\gamma\in\{-1,1\}\) specifies the sign of the
interaction. The choice \(\gamma=1\) corresponds to the repulsive plasma
model, whereas \(\gamma=-1\) corresponds to the attractive
gravitational model. The
geometry-dependent coefficient \(\lamOm\) and source density \(\rhos\)
are defined as follows: in the whole-space case,
\(\lamOm=4\pi\) and \(\rhos=\rho\); in the periodic case,
\(\lamOm=1\) and $\rhos=\rho-\bar\rho$,
\[
 \bar\rho
 =\int_{\T^3}\rho(t,x)\dd x
 =\iint_{\T^3\times\R^3}f_0(x,v)\dd x\dd v.
\]
The quantity \(\bar\rho\) is constant in time by conservation of mass. In the whole-space case, the potential is normalized by
\[
 U(t,x)\to0
 \quad\text{as }|x|\to\infty.
\]
In the periodic case, the subtraction of the homogeneous background
\(\bar\rho\) ensures that the right-hand side of the Poisson equation
has zero spatial mean. The potential is uniquely determined by
\[
 \int_{\T^3}U(t,x)\dd x=0.
\]

The energy of the Vlasov--Poisson system is defined by $\Ecal(t):=\Kcal(t)+\Pcal(t)$, where the kinetic and potential energies are defined by
\begin{equation}\label{eq:energy-def}
 \Kcal(t):=\frac12\iint_{\Om\times\R^3}|v|^2f(t,x,v)\dd x\dd v,
 \qquad
 \Pcal(t):=\frac{\gamma}{2\lamOm}\int_{\Om}|E(t,x)|^2\dd x.
\end{equation}

The Cauchy theory for the Vlasov--Poisson system has been extensively
studied; see, for instance, the survey of Rein \cite{Rein2007} for a general
account. Global classical solutions for smooth compactly supported data are
known in both geometries; see
\cite{BattRein1991,LionsPerthame1991,Pfaffelmoser1992,Schaeffer1991}.
The existence of global finite-energy weak solutions in the whole space goes
back to Arsenev and Horst--Hunze \cite{Arsenev1975,HorstHunze1984}. Despite
this classical existence theory, two natural borderline problems remain.

The first concerns the propagation of low velocity moments. Set
\[
  M_k(t):=\iint_{\Om\times\R^3}|v|^kf(t,x,v)\dd x\dd v.
\]
Lions and Perthame proved propagation above order three and asked whether
moments between the energy level and order three are propagated
\cite{LionsPerthame1991}. Pallard later answered this question in the whole
space, proving propagation for every $k>2$, but his periodic argument reached
only $k>14/3$ \cite{Pallard2012}; Chen--Chen subsequently improved the periodic threshold to
$k>3$ \cite{ChenChen2019}. The periodic interval $2<k\leq3$
therefore remained outside the known characteristic-line estimates. Our
first theorem closes this interval and provides, at the same time, a unified
short proof of the low-order estimate in both geometries.

The second question concerns conservation of energy at the natural
finite-energy threshold. The classical equation contains no dissipative mechanism, and its
smooth solutions conserve the total energy.
 By contrast, the standard compactness construction
controls the energy only through weak lower semicontinuity. In the repulsive
case this naturally yields an energy inequality. A
strict loss in the limiting procedure may therefore reflect a defect of weak
compactness rather than a physical dissipation mechanism. Constructing weak solutions that preserve this physical energy structure is consequently important both for the
analysis and for the physical consistency of the model.

In the final discussion of their 1991 paper, Lions and Perthame explicitly
listed conservation of the total energy under only the energy and
$L^\infty$ bounds among the unresolved questions
\cite{LionsPerthame1991}. Ambrosio--Colombo--Figalli later emphasized that,
even after the Lagrangian structure of finite-energy solutions is recovered,
whether the formal conservation law holds for distributional or renormalized
solutions remains an important open problem
\cite{AmbrosioColomboFigalli2017}. The endpoint obstruction is transparent.
If a uniform moment of order $k>2$ is available, then
\[
 \iint_{|v|>R}|v|^2f_n\dd x\dd v
 \leq R^{-(k-2)}M_k(f_n),
\]
so the kinetic-energy tails of a smooth approximation vanish uniformly. One
may then pass the kinetic and field energies separately, using velocity
averaging to obtain strong compactness of the macroscopic density. A related
strategy was used by Wang--Zhang for the periodic Vlasov--Poisson system with
radiation damping, where it yields the physically prescribed energy
dissipation equality \cite{WangZhang2026}. That result relies on a velocity moment strictly above the energy level. At
the second moment, the tail estimate above disappears, and weak convergence
alone may leave an energy defect.

Our second theorem gives a universal energy identity within the bounded
finite-energy weak-solution class. More precisely, every nonnegative weak
solution with finite initial kinetic energy that is essentially bounded in
phase space and has uniformly bounded mass and second velocity moment on
finite time intervals satisfies the critical density estimate and conserves
the total energy. No renormalization or approximation hypothesis is required.
As a consequence, the standard smooth-approximation construction produces a
global renormalized weak solution with this property.

We do not assert that the solution-level bounds appearing in the theorem
follow solely from the initial datum for an arbitrary distributional
solution, and we do not address uniqueness. The pointwise-in-time endpoint
question for the $L^5$ regularity of the field raised in
\cite{LionsPerthame1991} is also different from the present result; the new
regularity obtained here is a space--time estimate.

\subsection{The common critical-density mechanism}

The two borderline problems are resolved by the same space--time estimate.
For a smooth solution, introduce the kinetic mass--momentum tensor
\begin{equation*}
 A(t,x)=
 \begin{pmatrix}
  \rho & j^{\mathsf T}\\
  j & \Pi
 \end{pmatrix},
 \qquad
 j=\int_{\R^3}vf\dd v,
 \qquad
 \Pi=\int_{\R^3}v\otimes vf\dd v.
\end{equation*}
It is positive semidefinite, and the local conservation laws give
\[
  \Div_{t,x}A=\binom{0}{\rho E}.
\]
The source $\rho E$ is integrable using only mass, the $L^\infty$ bound, and
the second moment. For the classical approximations, extension by zero
outside a finite time strip permits the use of Serre's Euclidean
compensated-integrability theorem \cite{Serre2019} on
$\R_t\times\R_x^3$. On $\R_t\times\T_x^3$ one uses Serre's mixed
Euclidean--periodic theorem \cite{Serre2024}; its additional periodic-trace
term is
\[
  \tr\Pi=\int_{\R^3}|v|^2f\dd v,
\]
which is exactly the local kinetic-energy density. Thus the passage from
$\R^3$ to $\T^3$ costs no velocity moment above order two.

For an arbitrary bounded finite-energy weak solution, the same macroscopic
tensor is available after deriving the momentum equation by velocity
truncation. A smooth time cutoff then replaces the endpoint traces used for
the classical approximations and yields the same compensated-integrability
estimate without any approximation hypothesis.

Compensated integrability was previously applied by Serre
\cite{Serre2019} to the repulsive Vlasov--Poisson equation through a
different tensorial construction. In that work, the kinetic
mass--momentum tensor is completed by a nonlocal interaction stress
\(S[\rho]\) so as to obtain a divergence-free positive tensor. The
positivity of this completion is tied to the repulsive sign, and the
resulting estimate controls the nonlocal quantity
\[
    \bigl(\rho\,\det S[\rho]\bigr)^{1/3}
\]
rather than furnishing a Lebesgue estimate for \(\rho\) itself. Here,
instead, the uncompleted local kinetic tensor is retained. Its divergence
is the integrable source
\[
    \begin{pmatrix}
        0\\
        \rho E
    \end{pmatrix},
\]
while its positivity follows solely from \(f\geq0\) and is therefore
independent of the sign of the interaction. The determinant estimate below
then converts compensated integrability directly into
\[
    \rho\in L^2((0,T)\times\Omega),
\]
both in the whole-space and periodic settings. Thus the new point is not
the first use of compensated integrability for the Vlasov--Poisson
equation, but a local Div--BV tensor mechanism that yields a direct
critical density estimate and makes the subsequent energy and moment
arguments possible.

The second ingredient is a sharp pointwise lower bound. If $0\leq g\leq F$,
$m:=\int_{\R^3}g\,\dd v>0$, and
\[
 A_g=\int_{\R^3}\binom{1}{v}\otimes\binom{1}{v}g(v)\dd v,
\]
then
\begin{equation*}
  \det A_g\geq cF^{-2}m^6.
\end{equation*}
We give a detailed proof in \cref{sec:serre}, including the covariance
ellipsoid and the precise Markov-inequality step. Since the space--time
tensor has dimension four, Serre's theorem controls $(\det A)^{1/3}$; the
lower bound above therefore yields
\begin{equation*}
 \rho\in L^2((0,T)\times\Om),
 \qquad
 E\in L^2(0,T;H^1(\Om))\hookrightarrow L^2(0,T;L^6(\Om)).
\end{equation*}
This is the organizing estimate of the paper. For $2<k\leq3$, it places the
local moment $\int|v|^{k-1}f\dd v$ in $L^{6/5}_x$ and closes the differential
inequality for $M_k$. At the critical second moment, it combines with
$j\in L^\infty_tL^{6/5}_x$ to give $E\cdot j\in L^1_{t,x}$. We then derive
the kinetic-energy balance directly for an arbitrary weak solution in the
stated bounded finite-energy class by a monotone velocity truncation; the
potential-energy balance follows from the macroscopic continuity equation
and the Lions--Magenes chain rule.

The contribution is therefore not a juxtaposition of a Euclidean energy
argument and a periodic moment argument. It is a common compensated-density
principle for the Vlasov--Poisson system, with two distinct critical
consequences. In particular, it connects and advances two of the questions
highlighted by Lions and Perthame: it supplies a unified low-order moment
mechanism and reaches the natural finite-energy threshold in the energy
identity.

\subsection{Main results}

Our first theorem concerns moment propagation. The complete $k>2$ statement
is formulated for the repulsive problem, because the higher-order estimates
with which we complete the range are available in that setting. The direct
argument for $2<k\leq3$ is insensitive to the sign once a uniform
finite-energy bound is known.

\begin{theorem}[Moment propagation on the torus]\label{thm:moment-main}
Let $\gamma=1$ and $k>2$. Assume
\[
 0\leq f_0\in L^1(\T^3\times\R^3)\cap L^\infty(\T^3\times\R^3),
 \qquad M_k(0)<\infty.
\]
Then there exists a global nonnegative renormalized weak solution of
\eqref{eq:VP-system}, obtained as a limit of classical solutions, such that
\[
  \sup_{0\leq t\leq T}M_k(t)<\infty
\]
for every $T>0$.
\end{theorem}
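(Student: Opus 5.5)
The plan is to split the range of $k$. For $k>3$ we invoke the periodic result of Chen--Chen \cite{ChenChen2019}, which already gives propagation in the repulsive case. It remains to treat $2<k\leq3$, and in fact the argument below works for any $k>2$, as long as the estimates are uniform along a smooth approximation. First regularize $f_0$ into smooth compactly supported data $f_0^n$. We choose them so that $\|f_0^n\|_{L^1\cap L^\infty}$ and $M_k^n(0)$ are uniformly bounded. Classical theory on $\T^3$ then gives global classical solutions $f_n$. For these, conservation of mass, of $\|f_n\|_{L^\infty}$, and of energy with $\gamma=1$ give uniform bounds on $\sup_t M_2(f_n)$. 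The field energy is also bounded, because the potential energy is nonnegative in the repulsive case.

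The core step is the critical density estimate. Consider the space--time tensor $A_n$ built from $(\rho_n,j_n,\Pi_n)$ on $\R_t\times\T^3_x$, truncated to $(0,T)$. It is positive semidefinite and satisfies $\Div_{t,x}A_n=(0,\rho_nE_n)^{\mathsf T}$. The source is bounded in $L^1$ via
\[
\|\rho_nE_n\|_{L^1}\lesssim \|\rho_n\|_{L^{5/3}}\|E_n\|_{L^{5/2}}.
\]
Here $\rho_n\in L^{5/3}$ comes from the $L^\infty$ bound and $M_2$ by the usual interpolation, and $E_n$ is controlled by elliptic estimates. The endpoint traces at $t=0,T$ are bounded by the mass and momentum. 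Serre's mixed Euclidean--periodic compensated integrability theorem \cite{Serre2024} then controls $\int_0^T\!\int_{\T^3}(\det A_n)^{1/3}$. Its extra periodic term is $\int\tr\Pi_n=2\mathcal K$, which is bounded. Combining this with the pointwise bound $\det A_g\geq c\|f\|_\infty^{-2}\rho^6$ gives $\rho_n^2\lesssim(\det A_n)^{1/3}$. We obtain $\rho_n$ bounded in $L^2((0,T)\times\T^3)$. Elliptic regularity then gives $E_n$ bounded in $L^2(0,T;L^6)$.

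Next we close the moment inequality. Differentiating along the equation,
\[
\frac{d}{dt}M_k(f_n)=k\int E_n\cdot\int |v|^{k-2}v f_n\,\dd v\,\dd x\leq k\|E_n\|_{L^6}\|m_{k-1,n}\|_{L^{6/5}},
\]
where $m_{k-1}=\int|v|^{k-1}f\,\dd v$. The standard interpolation $m_{\ell}\lesssim \|f\|_\infty^{(k-\ell)/(k+3)}m_k^{(\ell+3)/(k+3)}$ with $\ell=k-1$ gives
\[
\|m_{k-1}\|_{L^{(k+3)/(k+2)}}\lesssim M_k^{(k+2)/(k+3)}.
\]
We then interpolate between this and $\|m_{k-1}\|_{L^{(k+1)/k}}$, which is controlled by $M_2$ only when $k-1\leq 2$ in the same way. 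For $k\leq3$ the exponent $6/5$ lies between these, and we obtain $\|m_{k-1}\|_{L^{6/5}}\lesssim 1+M_k^{\theta}$ with some $\theta\leq1$. The main obstacle is this exponent bookkeeping: we must check that $\theta\leq1$, i.e.\ that the inequality is at most linear in $M_k$, for all $2<k\leq3$. If the direct interpolation gives $\theta>1$ near $k=3$, the first thing to try is splitting velocities at $|v|=R(t,x)$ and optimizing $R$ pointwise against $\rho$. Once the inequality is linear, Grönwall with $\|E_n\|_{L^6}\in L^2(0,T)\subset L^1(0,T)$ gives $\sup_{[0,T]}M_k(f_n)\leq C(T)$ uniformly in $n$.

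Finally we pass to the limit. The uniform bounds, velocity averaging (which gives strong convergence of $\rho_n$ in $L^1$, hence of $E_n$), and the renormalized stability theory (DiPerna--Lions) yield a global renormalized weak solution $f$. By weak lower semicontinuity, $\sup_{[0,T]}M_k(t)\leq C(T)$. For $k>3$ we may use either this uniform argument or \cite{ChenChen2019} directly, which completes all $k>2$.
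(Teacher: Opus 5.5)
Your route is the paper's: a uniform energy bound, Serre's mixed theorem on $\R_t\times\T^3_x$ with periodic trace equal to the kinetic-energy density, and the bound $\det A_n\geq cF^{-2}\rho_n^6$. You only assert that bound; it needs the covariance-ellipsoid/Markov argument and the Schur identity $\det A_g=m^4\det C$ of \cref{lem:determinant}. This gives $E_n$ bounded in $L^2(0,T;L^6)$ and then the $L^6$--$L^{6/5}$ pairing.

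The step that fails as written is the one you flag as the main obstacle. The kinetic interpolation with orders $(2,k-1)$ places $m_{k-1}$ in $L^{5/(k+2)}$, not $L^{(k+1)/k}$. In neither version does $6/5$ lie between the two exponents for $k<3$: $(k+1)/k$ and $p_k=(k+3)/(k+2)$ both exceed $6/5$, and $5/(k+2)>6/5$ for $k<13/6$. The danger you anticipate, $\theta>1$, never arises. The only $M_k$-dependent input is $\|m_{k-1}\|_{p_k}\lesssim M_k^{(k+2)/(k+3)}$, so interpolating it against any endpoint bounded independently of $M_k$, or sublinearly in it, always gives a power below one. What is missing is the $L^1$ endpoint,
\[
 \|m_{k-1}\|_1=M_{k-1}\leq M^{(3-k)/2}M_2^{(k-1)/2}
 \qquad\text{or}\qquad
 M_{k-1}\leq M^{1/k}M_k^{(k-1)/k},
\]
combined with $1<6/5\leq p_k$, which holds precisely for $k\leq3$. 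The second choice is the paper's and gives $\|m_{k-1}\|_{6/5}\lesssim M_k^{1-1/(2k)}$. On $\T^3$ you could even skip interpolation, since $|\T^3|=1$ gives $\|m_{k-1}\|_{6/5}\leq\|m_{k-1}\|_{p_k}$. Either way the differential inequality is sublinear, and Gr\"onwall (or the paper's $Z=(1+M_k)^{1/(2k)}$) closes with $\|E_n\|_6\in L^1(0,T)$ uniformly in $n$.

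Second, your claim that the direct argument ``works for any $k>2$'' and could replace Chen--Chen for $k>3$ is false. For $k>3$ one has $p_k<6/5$, and no bound in terms of $F$, $M$, $M_2$, $M_k$ places $m_{k-1}$ in $L^{6/5}$. A scaling example shows this: take $f=F\one_{\{|v|\leq R\}}$ on a spatial set of measure $\delta\sim M_k/(FR^{k+3})$; then $\|m_{k-1}\|_{6/5}\sim R^{(k-3)/6}$, which blows up as $R\to\infty$. This is exactly the boundary recorded in \cref{rem:k3}. The case $k>3$ therefore genuinely requires Chen--Chen. Apply it as an a priori estimate to the same smooth solutions $f_n$, with constants depending only on $T,k,M,F,\Kstar,\sup_nM_k(f_{0,n})$, so the bound passes to the single limit solution. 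Finally, spacetime weak lower semicontinuity only yields an essential supremum in time. To get $\sup_{0\leq t\leq T}M_k(t)$, use either time-slice convergence $f_n(t)\rightharpoonup f(t)$ for every $t$ (\cref{lem:time-slice}), or the $C([0,T];L^1)$ continuity of the renormalized limit together with Fatou's lemma.
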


Our second theorem addresses conservation of the total energy at the natural
finite-energy threshold. For $T>0$, define the bounded finite-energy space
\begin{equation*}
 \Xcal_T(\Om):=
 \left\{f:\
 \begin{array}{l}
 f\in L^\infty((0,T)\times\Om\times\R^3),\\[0.2em]
 \displaystyle
 \operatorname*{ess\,sup}_{0<t<T}
 \iint_{\Om\times\R^3}(1+|v|^2)|f(t,x,v)|\dd x\dd v<\infty
 \end{array}
 \right\},
\end{equation*}
endowed with
\begin{equation*}
 \|f\|_{\Xcal_T(\Om)}
 :=\|f\|_{L^\infty((0,T)\times\Om\times\R^3)}
 +\operatorname*{ess\,sup}_{0<t<T}
 \iint_{\Om\times\R^3}(1+|v|^2)|f(t,x,v)|\dd x\dd v.
\end{equation*}
We write $f\in\Xcal_{\mathrm{loc}}(\Om)$ when
$f|_{(0,T)}\in\Xcal_T(\Om)$ for every $T>0$.

\begin{theorem}[Universal energy conservation]\label{thm:energy-universal}
Let $\Om\in\{\R^3,\T^3\}$ and $\gamma\in\{-1,1\}$. Let $(f,E)$ be a global
nonnegative weak solution of \eqref{eq:VP-system} with initial datum
\[
 0\leq f_0\in L^1(\Om\times\R^3)\cap L^\infty(\Om\times\R^3),
 \qquad M_2(0)<\infty.
\]
If \(f\in\mathcal X_{\mathrm{loc}}(\Omega)\), then the kinetic and
potential energies have locally absolutely continuous representatives
on \([0,\infty)\), and
\begin{equation}\label{eq:energy-main-conservation}
 \Ecal(t)=\Ecal(0)\qquad\text{for every }t\geq0.
\end{equation}
\end{theorem}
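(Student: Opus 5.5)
The plan has four steps: derive local conservation laws, obtain a critical space--time density bound by compensated integrability, prove the kinetic-energy balance, and prove the matching potential-energy balance.

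\emph{Step 1: local conservation laws.} For $f\in\Xcal_T(\Om)$ we first derive the macroscopic equations. Testing the weak formulation with $\varphi(t,x)$ gives the continuity equation $\partial_t\rho+\diver_x j=0$ in $\mathcal D'$. Testing with $\varphi(t,x)\,v_i\,\chi_R(v)$, where $\chi_R$ is a smooth cutoff, and letting $R\to\infty$ gives the momentum equation $\partial_t j+\diver_x\Pi=\rho E$.

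The limit $R\to\infty$ is justified as follows. The boundary terms involve $|v|^2f$, which is controlled by the uniform second moment; for the field term we use $|\nabla_v(v\chi_R)|\lesssim 1$ and $\chi_R\to1$. The field term needs $\rho E\in L^1_{t,x}$. This holds by interpolation: $\|f\|_\infty$ and $M_2$ give $\rho\in L^\infty_tL^{5/3}_x$ and $j\in L^\infty_tL^{5/4}_x$. Hardy--Littlewood--Sobolev, or elliptic regularity on $\T^3$, then gives $E\in L^\infty_tL^{15/4}_x$ (indeed $E\in L^\infty_tL^2$ by finite energy, via $\rho\in L^{6/5}$), and Hölder gives $\rho E\in L^1$.

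\emph{Step 2: critical density estimate.} Multiply the tensor $A$ by a smooth time cutoff $\theta(t)$ supported in $(0,T)$. The tensor $\theta A$ is positive semidefinite, and $\Div_{t,x}(\theta A)=\theta'(\rho,j)^{\mathsf T}+\theta(0,\rho E)^{\mathsf T}$, which is a finite measure. We apply Serre's theorem (Euclidean \cite{Serre2019} for $\R^3$, mixed periodic \cite{Serre2024} for $\T^3$, whose extra trace term is $\int\tr\Pi=2\Kcal$) to bound $\int\theta(\det A)^{1/3}$. The pointwise lower bound $\det A\geq c\|f\|_\infty^{-2}\rho^6$ from \cref{sec:serre} then gives $\rho\in L^2_{\mathrm{loc}}((0,T)\times\Om)$. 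We then enlarge the support of $\theta$. The endpoint strips near $t=0$ and $t=T$ are harmless: we work on $(0,T+1)$ and use global existence, and near $t=0$ we take the supremum over cutoffs, since the constants depend only on $\|f\|_{\Xcal}$ and not on $\theta'$ beyond its $L^1$ norm, which is $\le2$. This yields $\rho\in L^2((0,T)\times\Om)$. Elliptic regularity then gives $E\in L^2(0,T;\dot H^1)\subset L^2L^6$. Since $j\in L^\infty L^{5/4}\subset L^\infty L^{6/5}$ locally, and on $\R^3$ we use $L^1\cap L^{5/4}$, we conclude $E\cdot j\in L^1_{t,x}$.

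\emph{Step 3: kinetic-energy balance.} Test the weak formulation with $\varphi(t)\,\psi_R(|v|^2)$, where $\psi_R$ is a monotone truncation of $s\mapsto s/2$ with $\psi_R'\in[0,1/2]$. This gives
\[
\frac{d}{dt}\iint\psi_R f=\iint E\cdot v\,2\psi_R'(|v|^2)f.
\]
The right-hand side is dominated by $|E|\int|v|f\,dv$. That function lies in $L^1$ by Step 2: Hölder with $L^2L^6$ against $L^\infty L^{6/5}$ of $\int|v|f$, the latter again by interpolation. Dominated convergence then gives $\Kcal(t)-\Kcal(s)=\int_s^t\int E\cdot j$ for a.e.\ $s,t$, hence an absolutely continuous representative. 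Taking $s\to0$ uses weak continuity in time of $f$ together with lower semicontinuity, with the reverse inequality supplied by the identity itself. Attaining the initial datum $\Kcal(0)$ is the delicate point; here I would use the truncated identity with $s=0$, which is legitimate because $\psi_R$ is bounded on bounded sets, and pass $R\to\infty$ by monotone convergence.

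\emph{Step 4: potential-energy balance, and the main obstacle.} We have $E=-\nabla U$ with $\partial_t\rho=-\diver j$, so $\partial_tE=\lamOm\gamma\,\mathbb P_\nabla j$, where $\mathbb P_\nabla$ is the gradient projection. Since $E\in L^2L^6$ and $\partial_tE\in L^\infty L^{6/5}$ form a dual pair, the Lions--Magenes lemma gives
\[
\frac{d}{dt}\frac{\gamma}{2\lamOm}\|E\|_2^2=\int E\cdot j .
\]
On $\T^3$ the mean of $j$ plays no role, because $E$ has zero mean. With the sign conventions, $\frac{d}{dt}\Pcal=-\int E\cdot j$, so adding this to Step 3 gives $\Ecal(t)=\Ecal(0)$.

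The main obstacle is Step 2. One has to justify Serre's theorem for a merely weak tensor, which requires an approximation by mollification in $x$ that preserves positivity and measure-divergence. One also has to get uniform control up to the time endpoints with constants depending only on the $\Xcal_T$ norm.
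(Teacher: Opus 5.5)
Your proposal is correct and follows essentially the paper's route. The steps match: velocity-truncated macroscopic balances with $\rho E\in L^1$ from interpolation; Serre's theorem applied to a time-cut-off tensor $\theta A$ with $\|\theta'\|_{L^1}\leq 2$, together with the determinant bound $\det A\geq c\|f\|_\infty^{-2}\rho^6$; a truncated kinetic-energy identity passed to $R\to\infty$ via $E\in L^2L^6$ and $j\in L^\infty L^{6/5}$; and Lions--Magenes for the field.

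A few details still need fixing:
\begin{itemize}
\item The initial value of the truncated kinetic energy is $\iint\psi_R f_0$. It comes from testing with $\eta\in C_c^\infty([0,T))$, so that the initial term of the weak formulation appears, not from the boundedness of $\psi_R$.
\item In fact $\partial_tE=-\lamOm\gamma Q_\Om j$, so your intermediate display has the wrong sign, even though your final $\frac{\dd}{\dd t}\Pcal=-\int E\cdot j$ is right.
\item You must still identify the continuous representative of $E$ at $t=0$ with the Poisson field of $\rho_0$, using the initial trace of the continuity equation.
\item No extra mollification is needed, since Serre's theorem applies directly to $L^1$ tensors with measure divergence.
\end{itemize}
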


\begin{corollary}[Existence of an energy-conserving solution]
\label{cor:energy-existence}
Let $\Om\in\{\R^3,\T^3\}$ and $\gamma\in\{-1,1\}$. Assume
\[
 0\leq f_0\in L^1(\Om\times\R^3)\cap L^\infty(\Om\times\R^3),
 \qquad M_2(0)<\infty.
\]
Then \eqref{eq:VP-system} admits a global nonnegative renormalized weak
solution, obtained as a limit of classical solutions, which belongs to
$\Xcal_{\mathrm{loc}}(\Om)$ and conserves the total energy at every time.
\end{corollary}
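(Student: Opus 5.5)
The strategy is to build the solution by the standard smooth-approximation scheme, show that the approximations are bounded uniformly in $\Xcal_T(\Om)$, and then apply \cref{thm:energy-universal} to the limit. Energy conservation is then inherited from the universal theorem and does not have to be passed through the limit.

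First, regularize the datum. Take $0\leq f_0^n\in C_c^\infty(\Om\times\R^3)$ with
\[
 \|f_0^n\|_{L^\infty}\leq\|f_0\|_{L^\infty},\qquad
 f_0^n\to f_0 \text{ in } L^1,\qquad
 M_2(f_0^n)\to M_2(0),
\]
obtained by truncation and mollification. Let $f_n$ be the corresponding global classical solutions in either geometry. For each $f_n$, transport along characteristics gives $\|f_n(t)\|_{L^p}=\|f_0^n\|_{L^p}$ for every $p$, and the energy is conserved exactly, $\Ecal_n(t)=\Ecal_n(0)$.

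Next, obtain a uniform kinetic-energy bound. For $\gamma=1$ the potential energy is nonnegative, so $\Kcal_n(t)\leq\Ecal_n(0)$ directly. For $\gamma=-1$ I would use the standard interpolation
\[
 \|\rho\|_{L^{5/3}}\lesssim\|f\|_{L^\infty}^{2/5}\Kcal^{3/5},
\]
together with the Hardy--Littlewood--Sobolev inequality (in the periodic case, Sobolev on the mean-zero part). This gives $\|E\|_{L^2}^2\lesssim\|\rho\|_{L^{6/5}}^2$. Interpolating $L^{6/5}$ between $L^1$ and $L^{5/3}$ yields $\|E\|_{L^2}^2\lesssim \Kcal^{1/2}$ times constants depending on the mass and the $L^\infty$ norm. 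Since this growth is sublinear in $\Kcal$, it absorbs into the conserved energy and gives $\sup_{n,t}\Kcal_n(t)<\infty$. Combined with mass and $L^\infty$ conservation, $(f_n)$ is then bounded in $\Xcal_T(\Om)$ for every $T$. On $\T^3$ the $E$ bound needs $\bar\rho$ subtracted, but this is harmless because the mass is fixed.

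Then pass to the limit. Up to a subsequence, $f_n\rightharpoonup f$ weakly-$*$ in $L^\infty$ and weakly in $L^1_{\mathrm{loc}}$, and by lower semicontinuity $f\in\Xcal_{\mathrm{loc}}(\Om)$. Velocity averaging together with the uniform second moment gives $\rho_n\to\rho$ strongly in $L^1_{\mathrm{loc}}$, hence $E_n\to E$ strongly in $L^2_{\mathrm{loc}}$. This suffices to pass to the limit in the nonlinear term $E_nf_n$ in the weak formulation. I would obtain the renormalized property by the DiPerna--Lions argument, using that $E\in L^\infty_tW^{1,p}_{\mathrm{loc}}$ for some $p>1$. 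At this point $(f,E)$ is a global nonnegative weak solution in $\Xcal_{\mathrm{loc}}(\Om)$ with datum $f_0$ and $M_2(0)<\infty$, so \cref{thm:energy-universal} gives $\Ecal(t)=\Ecal(0)$ for all $t$.

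I expect the main obstacle to be the attractive a priori kinetic bound. The sublinear absorption argument must be carried out carefully on $\T^3$, where the background term and the mean-zero potential alter the constants. A secondary point is checking that the limit meets precisely the definition of weak solution used in \cref{thm:energy-universal}, in particular attainment of the initial datum in the weak time-continuous sense.
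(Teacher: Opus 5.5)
Your proposal is correct and follows essentially the paper's own route. The uniform kinetic bound, including the sublinear absorption for $\gamma=-1$, is \cref{lem:uniform-K}; the limit passage via velocity averaging, strong convergence of $E_n$ and DiPerna--Lions renormalization is \cref{prop:compactness}; and energy conservation is then read off from \cref{thm:energy-universal} rather than passed through the limit. Your concern about the initial datum is settled by the weak formulation itself, since the term $\iint f_0\varphi(0)$ passes to the limit because $f_0^n\to f_0$ in $L^1$, and \cref{thm:energy-universal} requires no strong time continuity.
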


\begin{corollary}[Regularity consequences]
\label{rem:common-regularity}
Every weak solution covered by \cref{thm:energy-universal} satisfies, for
every $T>0$,
\[
 \rho\in L^2((0,T)\times\Om),
 \qquad
 E\in C([0,T];L^2(\Om))\cap L^2(0,T;H^1(\Om)).
\]
The solutions in \cref{thm:moment-main,cor:energy-existence} may moreover be
chosen so that, for every $T>0$ and every $1\leq p<\infty$,
\[
 f\in C([0,T];L^p(\Om\times\R^3))\cap\Xcal_T(\Om).
\]
Here $\Om=\T^3$ for \cref{thm:moment-main}, whereas
$\Om\in\{\R^3,\T^3\}$ for \cref{cor:energy-existence}.
\end{corollary}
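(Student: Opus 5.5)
The statement to prove is \cref{rem:common-regularity}. It has two parts: a regularity statement for every solution covered by \cref{thm:energy-universal}, and a continuity statement for the constructed solutions. I will treat them separately. Throughout, fix $T>0$ and a weak solution $f\in\Xcal_T(\Om)$.

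\emph{The density estimate.} The first claim, $\rho\in L^2((0,T)\times\Om)$, is the critical density estimate. I will derive it as in the introduction.
\begin{enumerate}[label=(\roman*)]
\item Truncate in velocity to justify the local conservation laws for the tensor $A$, so that $\Div_{t,x}A=(0,\rho E)^{\mathsf T}$ in distributions.
\item Multiply by a smooth time cutoff $\chi(t)$. This creates an extra divergence source $\chi'(t)(\rho,j)^{\mathsf T}$, which lies in $L^1$ because mass and $|j|\le \rho+\int|v|^2f\dd v$ are controlled in $L^\infty_tL^1_x$.
\item Check that $\rho E\in L^1_{t,x}$. Interpolation in $v$ gives $\rho\le C\|f\|_\infty^{3/5}(\int|v|^2f\dd v)^{3/5}$, so $\rho\in L^\infty_tL^{5/3}_x$. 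Combined with $E\in L^\infty_tL^2_x$, this yields $\rho E\in L^\infty_t L^1_x$ by Hölder, since $3/5+1/2\le1$ after using the mass bound on the complement.
\item Apply Serre's compensated integrability: the Euclidean theorem \cite{Serre2019} when $\Om=\R^3$, and the mixed Euclidean--periodic theorem \cite{Serre2024} when $\Om=\T^3$. The periodic-trace term there is $\tr\Pi$, which is bounded by the second moment. The result is $(\det A)^{1/3}\in L^{4/3}_{t,x}$.
\item Apply the pointwise bound $\det A\ge c\|f\|_\infty^{-2}\rho^6$ (proved in \cref{sec:serre}). Then $(\det A)^{1/3}\gtrsim\rho^2$, so $\rho^{8/3}$ is locally integrable. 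A cheaper route that I would use instead is to read off $\int\chi\rho^2\lesssim\int(\det A)^{1/3}$ together with the integrable right-hand side; this is exactly the $L^2$ claim of the introduction, and I will just quote the main density theorem of the paper.
\end{enumerate}
From $\rho\in L^2_{t,x}$, elliptic regularity for $-\Delta U=\lamOm\gamma\rhos$ gives $\nabla E\in L^2_{t,x}$. On the torus this is Fourier multiplication. On $\R^3$ it is Calderón--Zygmund, and $E\in L^\infty_tL^2_x$ holds by the finite potential energy, which in turn follows from $\rho\in L^\infty_tL^{6/5}$ via Hardy--Littlewood--Sobolev. Hence $E\in L^2(0,T;H^1)$.

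\emph{Continuity of $E$.} For $E\in C([0,T];L^2)$, I use the continuity equation $\partial_t\rho+\diver j=0$, which gives $\partial_tE=\lamOm\gamma\,\nabla\Delta^{-1}\diver j$ up to sign. This is bounded in $L^\infty_tL^{6/5}_x$ (and lies in $L^\infty_t L^1$), so $E$ is weakly continuous in $L^2$. \cref{thm:energy-universal} says the potential energy $\Pcal$ has an absolutely continuous representative, so $\|E(t)\|_{L^2}$ is continuous in $t$. Weak continuity plus continuity of the norms gives strong continuity. A cleaner alternative, which avoids the weak-continuity step, is the Lions--Magenes lemma: $E\in L^2H^1$ with $\partial_tE\in L^2H^{-1}$ (by duality from $j\in L^\infty L^{6/5}\subset L^2\dot H^{-1}$) gives $E\in C([0,T];L^2)$ directly. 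On $\R^3$ the $\dot H^{-1}$ embedding uses Sobolev duality.

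\emph{Continuity of the constructed solutions.} The constructed solutions are renormalized limits of classical solutions and belong to $\Xcal_T$ by \cref{cor:energy-existence} and \cref{thm:moment-main}. Here I will invoke the DiPerna--Lions theory: since $E\in L^1_tW^{1,1}_{\mathrm{loc}}$ and $E$ is divergence-free in $v$, renormalized solutions are continuous in $L^1$. The $L^\infty$ bound then upgrades this to $C([0,T];L^p)$ for $p<\infty$ by interpolation. To get $L^1$ continuity on all of $\Om\times\R^3$ rather than locally, I use tightness in $v$ from the second moment, and tightness in $x$ on $\R^3$ from a spatial weight bound propagated along characteristics (or from the equicontinuity of mass).

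\emph{Main obstacle.} The step I expect to be hardest is the strong $L^1$ continuity for $f$ at the finite-energy level, since only $E\in L^2H^1$ is available. This is within DiPerna--Lions, because $\nabla E\in L^2$ suffices, but it requires care with the renormalization and with the tails.
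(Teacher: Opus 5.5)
Your overall route is the paper's. The corollary is assembled from three ingredients:
\begin{itemize}
\item \cref{thm:critical-density-universal}, which gives $\rho\in L^2_{t,x}$ and $E\in L^2(0,T;H^1)$;
\item the Lions--Magenes step in the proof of \cref{thm:energy-identity-universal}, which gives $E\in C([0,T];L^2)$;
\item the DiPerna--Lions argument in Step~5 of \cref{prop:compactness}, which gives $f\in C([0,T];L^p)\cap\Xcal_T$.
\end{itemize}
Because you end up quoting the density theorem and using Lions--Magenes, the argument stands. The ``main obstacle'' you anticipate for $f$ is not really one. The finite-energy layer already gives $\nabla_xE\in L^\infty(0,T;L^{5/3})$, so $L^2H^1$ is not needed for renormalization. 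The paper handles the tails on $\R^3$ by checking the DiPerna--Lions growth condition via $E\in L^\infty_tL^3_x$; your tightness route (second moment in $v$, propagated spatial weight in $x$) also works.

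Several intermediate claims in your sketch are false, however, and should be removed or corrected.
\begin{itemize}
\item In (iii) the H\"older pairing fails, since $\frac35+\frac12=\frac{11}{10}>1$. Using the mass does not help: on $\{\rho>1\}$, pairing with $E\in L^2_x$ would still require $\rho\in L^2_x$, which is the estimate you are trying to prove. The non-circular bound is \eqref{eq:rhoE-L1}. \cref{lem:poisson} gives $E\in L^\infty_tL^{15/4}_x$ from $\rho\in L^\infty_tL^{5/3}_x$, and interpolation with the mass puts $\rho$ in the conjugate space $L^\infty_tL^{15/11}_x$. The velocity interpolation also carries $\|f\|_\infty^{2/5}$, not $\|f\|_\infty^{3/5}$.
\item In (iv)--(v), Serre's theorem with $N=4$ gives $(\det A)^{1/4}\in L^{4/3}$, i.e.\ $(\det A)^{1/3}\in L^1$, not $(\det A)^{1/3}\in L^{4/3}$. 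So the claim that $\rho^{8/3}$ is locally integrable is unfounded; the determinant bound yields exactly $\rho\in L^2_{t,x}$ and no more.
\item Your first argument for $E\in C([0,T];L^2)$ has a gap. An absolutely continuous representative of $\Pcal$ agrees with $\frac{\gamma}{2\lamOm}\|E(t)\|_2^2$ only for a.e.\ $t$. For the weakly continuous representative the norm is merely lower semicontinuous and may drop at exceptional times, so strong continuity at every time does not follow. The Lions--Magenes alternative you give is the correct argument and is the one the paper uses.
\item The parenthetical claim $Q_\Om j\in L^\infty_tL^1_x$ is not justified, because $Q_\Om$ is not bounded on $L^1$. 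You do not need it.
\end{itemize}
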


\begin{remark}[Scope of the moment-propagation result]
\label{rem:moment-scope}
For $2<k\leq3$, the moment estimate is proved directly in this paper in both
$\R^3$ and $\T^3$. The complete whole-space range is due to
Pallard~\cite{Pallard2012}, while the periodic range $k>3$ is due to
Chen--Chen~\cite{ChenChen2019}. Thus the new moment-propagation conclusion in
\cref{thm:moment-main} is the interval $2<k\leq3$ on the torus.
\end{remark}

\begin{remark}[Meaning of universality]\label{rem:universality}
\Cref{thm:energy-universal} applies to every weak solution belonging to
$\Xcal_{\mathrm{loc}}(\Om)$ with the stated initial datum. It does not assert
that membership in this space follows from the initial datum for an
arbitrary distributional solution, and it does not address uniqueness.
\end{remark}

\paragraph{Notation and conventions.}
Throughout the paper,
\[
 M:=\iint_{\Om\times\R^3}f_0\dd x\dd v,
 \qquad F:=\|f_0\|_{L^\infty(\Om\times\R^3)}.
\]
For the smooth approximations, $\Kstar$ denotes a finite upper bound for
$\sup_n\sup_{t\geq0}\Kcal_n(t)$; its dependence is specified in
\cref{lem:uniform-K}. Unless a domain is displayed, $\|h\|_p$ means the
$L^p(\Om)$ norm, and mixed norms are written as
$\|h\|_{L^q_tL^p_x}$. The letter $C$ denotes a positive constant whose value
may change from line to line. By default it depends only on the dimension;
$C_{\Om}$ may also depend on the fixed periodic geometry, and any dependence
on $T,M,F,$ or $\Kstar$ is indicated when relevant. For an arbitrary weak
solution, dependence on the bounded finite-energy class is recorded through
$\|f\|_{\Xcal_T(\Om)}$.

The paper is organized as follows. \Cref{sec:prelim} records the standard
finite-energy estimates, the macroscopic balance laws, and the approximation
and compactness framework. \Cref{sec:serre} gives the detailed application
of Serre's Euclidean and mixed periodic theorems and proves both the uniform
approximation-level and universal critical density estimates.
\Cref{sec:moments} derives moment propagation. Finally, \cref{sec:energy}
proves the energy-equality criteria, removes the strong time-continuity
assumption, and completes the main results.

\section{Preliminaries}\label{sec:prelim}

\subsection{Weak solutions and finite-energy estimates}

\begin{definition}\label{def:weak}
A nonnegative function $f$ is a weak solution of \eqref{eq:VP-system} on
$[0,\infty)$ if
\[
 f\in L^\infty_{\mathrm{loc}}([0,\infty);L^1(\Om\times\R^3))
 \cap L^\infty_{\mathrm{loc}}([0,\infty)\times\Om\times\R^3),
\]
$\rho=\int f\dd v$ generates $E$ through \eqref{eq:VP-system}, and for every
$\varphi\in C_c^\infty([0,\infty)\times\Om\times\R^3)$,
\begin{equation}\label{eq:weak-form}
 \int_0^\infty\iint_{\Om\times\R^3}
 f\bigl(\partial_t\varphi+v\cdot\nabla_x\varphi
 +E\cdot\nabla_v\varphi\bigr)\dd x\dd v\dd t
 +\iint f_0\varphi(0)\dd x\dd v=0.
\end{equation}
In the periodic case the test functions are periodic in $x$. Such a weak solution is called renormalized if, for every
\(\beta\in C^1([0,\infty))\) satisfying
\(\beta(0)=0\) and \(\beta'\in L^\infty([0,\infty))\), one has
\[
 \partial_t\beta(f)
 +v\cdot\nabla_x\beta(f)
 +E\cdot\nabla_v\beta(f)=0
\]
in the sense of distributions, with initial datum \(\beta(f_0)\).
This is the standard DiPerna--Lions notion of renormalized solution
\cite{DiPernaLions1989}.
\end{definition}

For smooth solutions, integration against $1$ and $v$ gives
\begin{equation}\label{eq:mass-momentum}
 \partial_t\rho+\diver_x j=0,
 \qquad
 \partial_tj+\Div_x\Pi=\rho E.
\end{equation}
Let
\[
  Q_{\Om}=-\nabla(-\Delta_{\Om})^{-1}\diver
\]
be the Helmholtz projection onto gradient fields; on $\T^3$ the inverse is
taken on mean-zero functions. The continuity equation and Poisson's equation
imply
\begin{equation}\label{eq:E-time}
  \partial_tE=-\lamOm\gamma Q_{\Om}j
\end{equation}
in distributions whenever the two sides are defined.

\begin{lemma}[Kinetic interpolation]\label{lem:density-current}
Let $g\geq0$, $g\in L^1(\Om\times\R^3)\cap L^\infty(\Om\times\R^3)$, and set
\[
 m_\ell[g](x):=\int_{\R^3}|v|^\ell g(x,v)\dd v,
 \qquad
 M_\ell(g):=\int_{\Om}m_\ell[g](x)\dd x.
\]
For $0\leq\ell<k$,
\begin{equation}\label{eq:general-kinetic-interpolation}
 m_\ell[g](x)
 \leq C_{k,\ell}\|g\|_\infty^{(k-\ell)/(k+3)}
 m_k[g](x)^{(\ell+3)/(k+3)}.
\end{equation}
Consequently, if $p_{k,\ell}=(k+3)/(\ell+3)$, then
\begin{equation}\label{eq:integrated-kinetic-interpolation}
 \|m_\ell[g]\|_{p_{k,\ell}}
 \leq C_{k,\ell}\|g\|_\infty^{(k-\ell)/(k+3)}
 M_k(g)^{(\ell+3)/(k+3)}.
\end{equation}
Moreover, for $0\leq\ell\leq k$,
\begin{equation}\label{eq:global-moment-interpolation}
 M_\ell(g)\leq M_0(g)^{1-\ell/k}M_k(g)^{\ell/k}.
\end{equation}
\end{lemma}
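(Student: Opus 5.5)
The plan is the classical velocity-splitting argument, done pointwise in \(x\); after that, the integrated bounds are direct consequences. Fix \(x\) and a radius \(R>0\), to be optimized. Split the velocity integral defining \(m_\ell[g](x)\) into \(\{|v|\leq R\}\) and \(\{|v|>R\}\). On the ball, bound \(g\leq\|g\|_\infty\) to get
\[
\int_{|v|\leq R}|v|^\ell g\dd v\leq \|g\|_\infty\int_{|v|\leq R}|v|^\ell\dd v=\frac{4\pi}{\ell+3}\|g\|_\infty R^{\ell+3}.
\]
Outside the ball, use \(|v|^\ell\leq R^{\ell-k}|v|^k\), which holds because \(\ell<k\). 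This gives
\[
\int_{|v|>R}|v|^\ell g\dd v\leq R^{\ell-k}m_k[g](x).
\]
So
\[
m_\ell[g](x)\leq C\bigl(\|g\|_\infty R^{\ell+3}+R^{\ell-k}m_k[g](x)\bigr).
\]

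Next, balance the two terms by choosing \(R=(m_k[g](x)/\|g\|_\infty)^{1/(k+3)}\). The degenerate cases \(m_k[g](x)=0\) or \(m_k[g](x)=\infty\) are trivial: in the first, \(g(x,\cdot)=0\) a.e. so both sides vanish; in the second the bound is empty. With this choice both terms equal \(\|g\|_\infty^{(k-\ell)/(k+3)}m_k[g](x)^{(\ell+3)/(k+3)}\), since
\[
\|g\|_\infty R^{\ell+3}=\|g\|_\infty^{1-(\ell+3)/(k+3)}m_k^{(\ell+3)/(k+3)}.
\]
This proves \eqref{eq:general-kinetic-interpolation} with \(C_{k,\ell}=4\pi/(\ell+3)+1\).

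For \eqref{eq:integrated-kinetic-interpolation}, raise the pointwise bound to the power \(p_{k,\ell}=(k+3)/(\ell+3)\). The moment factor then becomes exactly \(m_k[g](x)\), so integrating over \(\Om\) gives \(M_k(g)\). Taking the \(1/p_{k,\ell}\) root returns the exponent \((\ell+3)/(k+3)\) on \(M_k(g)\), while the \(L^\infty\) factor is unchanged.

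Finally, \eqref{eq:global-moment-interpolation} is Hölder's inequality on \(\Om\times\R^3\) with respect to the measure \(g\dd x\dd v\). Write \(|v|^\ell=1\cdot|v|^\ell\) and use the exponents \(k/(k-\ell)\) and \(k/\ell\). The endpoints \(\ell=0\) and \(\ell=k\) are identities, so no separate argument is needed there.

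There is no real obstacle. The only care needed is to check that the exponent bookkeeping yields exactly \(p_{k,\ell}\), and to treat \(m_k[g](x)\in\{0,\infty\}\) separately so the choice of \(R\) is legitimate. Measurability of \(m_\ell[g]\) and \(m_k[g]\) follows from Tonelli's theorem.
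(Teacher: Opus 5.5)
Your proposal is correct and follows the paper's proof essentially verbatim: you split at $|v|=R$ to get $m_\ell[g]\leq C_\ell\|g\|_\infty R^{\ell+3}+R^{\ell-k}m_k[g]$, optimize in $R$, raise to the power $p_{k,\ell}$ and integrate, and apply H\"older with respect to $g\dd x\dd v$. The one degenerate case you did not list, $\|g\|_\infty=0$ (where your choice of $R$ is undefined), is trivial, since then $g=0$ almost everywhere and both sides vanish.
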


\begin{proof}
For $R>0$,
\[
 m_\ell[g]\leq C_\ell\|g\|_\infty R^{\ell+3}
                  +R^{\ell-k}m_k[g].
\]
Optimizing in $R$ gives \eqref{eq:general-kinetic-interpolation}; raising
that inequality to the power $p_{k,\ell}$ and integrating in $x$ gives
\eqref{eq:integrated-kinetic-interpolation}. Finally,
\eqref{eq:global-moment-interpolation} follows from H\"older's inequality
with respect to the measure $g\dd x\dd v$.
\end{proof}

\begin{lemma}[Poisson estimates]\label{lem:poisson}
For $1<q<\infty$, the Poisson field satisfies
\[
 \|E\|_2\leq C_{\Om}\|\rhos\|_{6/5},
 \qquad
 \|E\|_{15/4}\leq C_{\Om}\|\rhos\|_{5/3},
 \qquad
 \|\nabla E\|_q\leq C_{\Om,q}\|\rhos\|_q.
\]
In $\R^3$ the first two bounds follow from Hardy--Littlewood--Sobolev and the
third from Calder\'on--Zygmund theory. On $\T^3$ they follow from periodic
elliptic regularity and Poincar\'e's inequality. Moreover, since
$|\T^3|=1$ and $\rho\geq0$,
\[
 \|\rho-\bar\rho\|_{L^p(\T^3)}
 \leq \|\rho\|_{L^p(\T^3)}+\bar\rho
 \leq 2\|\rho\|_{L^p(\T^3)},
 \qquad 1\leq p<\infty.
\]
\end{lemma}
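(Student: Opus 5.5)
The statement just given is \cref{lem:poisson}, and the plan is to write $E=-\nabla U$ with $-\Delta U=\lamOm\gamma\rhos$, so that $E$ is (up to sign and constant) the gradient of the Newtonian potential of $\rhos$. In $\R^3$ this means
\[
 E=-\gamma\,\frac{x}{|x|^3}*\rho ,
 \qquad |E|\leq \frac{C}{|x|^{2}}*|\rho| .
\]
On $\T^3$ the field is $E=-\gamma\nabla G*(\rho-\bar\rho)$, where $G$ is the mean-zero periodic Green's function. Its gradient satisfies $|\nabla G(x)|\leq C|x|^{-2}$ near the origin and is smooth away from it. So both geometries reduce to estimates for a Riesz potential of order one.

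For the first two bounds I will use Hardy--Littlewood--Sobolev for the kernel $|x|^{-2}=|x|^{-(3-1)}$ in $\R^3$. It gives $\|E\|_r\leq C\|\rho\|_p$ whenever $1/r=1/p-1/3$ and $1<p<3$. Taking $p=6/5$ yields $r=2$. Taking $p=5/3$ yields $1/r=3/5-1/3=4/15$, that is, $r=15/4$.

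On the torus I will split $\nabla G=\chi\nabla G+(1-\chi)\nabla G$ with a cutoff $\chi$ near the origin. The singular part is dominated by $C|x|^{-2}\one_{|x|<1/2}$, and extending the function to $\R^3$ on one period lets me apply the Euclidean HLS bound. The smooth part is bounded, so Young's inequality gives $\|\cdot\|_r\leq C\|\rhos\|_1\leq C\|\rhos\|_p$, using $|\T^3|=1$. Alternatively, I could use Fourier multipliers together with Sobolev embedding $W^{1,p}(\T^3)\hookrightarrow L^{r}$ applied to the mean-zero function $U$, with Poincar\'e's inequality controlling the zero mode.

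For the third bound, note that $\nabla E=-\nabla^2U=\lamOm\gamma\,\nabla^2(-\Delta)^{-1}\rhos$. This is a matrix of double Riesz transforms $R_iR_j$ applied to $\rhos$. In $\R^3$ these are Calder\'on--Zygmund operators bounded on $L^q$ for $1<q<\infty$. On $\T^3$ the symbols $\xi_i\xi_j/|\xi|^2$, restricted to $\xi\in 2\pi\mathbb Z^3\setminus\{0\}$, are the restrictions of a smooth homogeneous degree-zero Mikhlin multiplier. Hence they are $L^q(\T^3)$-bounded, for instance by transference (de Leeuw / Stein--Weiss). The zero mode is harmless because $\rhos$ has mean zero.

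The final inequality is elementary. Since $\rho\geq0$ and $|\T^3|=1$, we have $\bar\rho=\|\rho\|_1\leq\|\rho\|_p$ by H\"older. The triangle inequality with $\|\bar\rho\|_p=\bar\rho$ then gives the factor $2$.

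The only mildly delicate point is the periodic case. There one must check that the periodic Green's function really has the local $|x|^{-2}$ gradient singularity with a smooth remainder, or equivalently justify the multiplier transference. I would settle this by citing the standard expansion $G=\frac1{4\pi|x|}+$ smooth on a fundamental cell.
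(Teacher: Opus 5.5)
Your proposal is correct and matches the paper's own justification: Hardy--Littlewood--Sobolev with the exponent pairs $6/5\to2$ and $5/3\to15/4$, Calder\'on--Zygmund in $\R^3$, periodic multiplier (elliptic) regularity for $\nabla E$ on $\T^3$, and the H\"older/triangle bound $\bar\rho=\|\rho\|_1\leq\|\rho\|_p$ for the centered density. For the first two torus bounds the paper invokes elliptic regularity plus Poincar\'e, which is your stated alternative; your Green's-function splitting is an equally valid reduction to the Euclidean case, provided the singular part is compared with the periodic extension on the neighbouring cells rather than on a single period, and the sign slips in your formulas for $E$ and $\nabla E$ do not affect the norm bounds.
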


\begin{lemma}[Uniform second-moment bound]\label{lem:uniform-K}
Let $f$ be a smooth solution with mass $M$, $\|f_0\|_\infty\leq F$, and
finite initial second moment. For either sign $\gamma=\pm1$ there is a
constant
\[
 \Kstar=\Kstar(\Om,M,F,\Kcal(0),\|E(0)\|_2)<\infty
\]
such that $\sup_{t\geq0}\Kcal(t)\leq\Kstar$. The same bound is uniform for
the standard smooth approximations of $f_0$.
\end{lemma}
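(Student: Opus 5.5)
The plan is to combine conservation of energy for smooth solutions, $\Kcal(t)+\Pcal(t)=\Kcal(0)+\Pcal(0)$, with the kinetic interpolation of \cref{lem:density-current} and the Poisson estimates of \cref{lem:poisson}. The repulsive case $\gamma=1$ is immediate: $\Pcal\geq0$, so $\Kcal(t)\leq\Ecal(0)=\Kcal(0)+\frac{1}{2\lamOm}\|E(0)\|_2^2$. The substantive case is the attractive one, $\gamma=-1$. There the identity reads
\[
\Kcal(t)=\Ecal(0)+\frac{1}{2\lamOm}\|E(t)\|_2^2 ,
\]
and I must bound $\|E(t)\|_2^2$ sublinearly in $\Kcal(t)$.

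For this I will apply \eqref{eq:integrated-kinetic-interpolation} with $k=2$ and $\ell=0$, so that $p_{2,0}=5/3$. Since $\|f(t)\|_\infty=\|f_0\|_\infty\leq F$ (transport by a divergence-free field) and $\|\rho(t)\|_1=M$, this gives
\[
\|\rho\|_{5/3}\leq CF^{2/5}\Kcal^{3/5}.
\]
Interpolating between $L^1$ and $L^{5/3}$, with $1/(6/5)=\theta+(1-\theta)\,(3/5)$, gives $\theta=1/2$, hence
\[
\|\rho\|_{6/5}\leq M^{1/2}\|\rho\|_{5/3}^{1/2}\leq CM^{1/2}F^{1/5}\Kcal^{3/10}.
\]
In $\R^3$, $\rhos=\rho$, and the first bound of \cref{lem:poisson} yields $\|E\|_2^2\leq CMF^{2/5}\Kcal^{3/5}$. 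On $\T^3$ the same holds after using $\|\rho-\bar\rho\|_{6/5}\leq2\|\rho\|_{6/5}$ and absorbing $C_\Om$.

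Substituting into the energy identity gives
\[
\Kcal(t)\leq\Ecal(0)+C_\Om MF^{2/5}\Kcal(t)^{3/5}.
\]
Since $3/5<1$, Young's inequality gives
\[
\Kcal(t)\leq 2\Ecal(0)_+ +C(C_\Om MF^{2/5})^{5/2},
\]
with $\Ecal(0)\leq\Kcal(0)+\frac{1}{2\lamOm}\|E(0)\|_2^2$. This defines $\Kstar(\Om,M,F,\Kcal(0),\|E(0)\|_2)$ with exactly the claimed dependence, uniformly in $t\geq0$. Finiteness of $\Kcal(t)$ for each $t$, which the absorption step needs, holds for smooth solutions with the moment propagated; alternatively a continuity argument handles it.

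For the standard approximations, take $f_{0,n}$ to be smooth compactly supported regularizations and truncations of $f_0$ with
\[
\|f_{0,n}\|_1\leq M,\qquad \|f_{0,n}\|_\infty\leq F,\qquad \Kcal_n(0)\to\Kcal(0).
\]
Then $\|E_n(0)\|_2\leq C_\Om\|\rho_{0,n}\|_{6/5}$, which by the same interpolation is bounded by $C M^{1/2}F^{1/5}\Kcal_n(0)^{3/10}$, so the initial field energies are uniformly bounded and the constant is uniform in $n$. I expect the only delicate point to be ensuring that the approximating data preserve the $L^1$ and $L^\infty$ bounds and satisfy $\sup_n\Kcal_n(0)<\infty$. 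Convolution in $(x,v)$ followed by cutoff achieves this, because convolution with a probability mollifier does not increase the $L^1$ or $L^\infty$ norms and changes the second moment only by a controlled amount.
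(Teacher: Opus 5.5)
Your route is the same as the paper's: energy identity, the $(k,\ell)=(2,0)$ kinetic interpolation, interpolation between $L^1$ and $L^{5/3}$, the Poisson bound, and absorption of a sublinear power. However, the interpolation exponent is miscomputed, and the inequalities you derive from it are false in $\R^3$. Solving $5/6=\theta+(1-\theta)\cdot 3/5$ gives $\theta=7/12$, not $1/2$; the weights $(1/2,1/2)$ interpolate to $L^{5/4}$, not $L^{6/5}$. For example, $\|\rho\|_{6/5}\leq M^{1/2}\|\rho\|_{5/3}^{1/2}$ fails for $\rho=\one_{B_r}$ with $r$ large, since the two sides scale like $r^{5/2}$ and $r^{12/5}$. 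Likewise, $\|E\|_2^2\leq CMF^{2/5}\Kcal^{3/5}$ fails under the dilation $f(x,v)\mapsto f(x/b,v)$ as $b\to\infty$: the left side scales like $b^5$ and the right like $b^{24/5}$. On the unit torus your inequality happens to hold, because $\|\rho\|_{6/5}\leq\|\rho\|_{5/4}$ there, but the lemma is also asserted in $\R^3$.

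The repair is immediate and gives exactly the paper's estimate. From $\|\rho\|_{6/5}\leq M^{7/12}\|\rho\|_{5/3}^{5/12}$ you get
\[
 \|E(t)\|_2^2\leq C_\Om M^{7/6}\|\rho(t)\|_{5/3}^{5/6}
 \leq C_\Om M^{7/6}F^{1/3}\Kcal(t)^{1/2},
\]
which, up to a harmless additive constant, is the paper's bound $C_{\Om,M,F}(1+\Kcal^{1/2})$. The power is still below one, so your absorption step goes through unchanged. It now reads $\Kcal\leq 2\Ecal(0)_++b^2$ with $b=C_\Om M^{7/6}F^{1/3}/(2\lamOm)$.

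For the approximations, the corrected interpolation gives $\|E_n(0)\|_2\leq C_\Om M^{7/12}F^{1/6}\Kcal_n(0)^{1/4}$, which is uniformly bounded once $\sup_n\Kcal_n(0)<\infty$. This differs slightly from the paper, which proves $\rho_{0,n}\to\rho_0$ in $L^{6/5}$ and hence $E_n(0)\to E(0)$ in $L^2$. Your version is enough for a uniform $\Kstar$ and shows that the dependence on $\|E(0)\|_2$ is redundant. The paper's version shows the constant can be taken essentially equal to that of the limit datum.
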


\begin{proof}
Taking $(k,\ell)=(2,0)$ in
\eqref{eq:integrated-kinetic-interpolation},
$\|\rho(t)\|_{5/3}\leq C F^{2/5}\Kcal(t)^{3/5}$. Interpolation between
$L^1$ and $L^{5/3}$, followed by \cref{lem:poisson}, yields
\[
 \|E(t)\|_2^2\leq C_{\Om,M,F}\bigl(1+\Kcal(t)^{1/2}\bigr),
\]
where the harmless additive constant allows the two geometries to be treated simultaneously. If \(\gamma=1\), the positivity of \(\mathcal P\), together with the
classical energy identity
\(\mathcal E(t)=\mathcal E(0)\), immediately controls \(\mathcal K(t)\). If $\gamma=-1$, then
\[
 \Kcal(t)\leq |\Ecal(0)|
   +C_{\Om,M,F}\bigl(1+\Kcal(t)^{1/2}\bigr),
\]
and a quadratic inequality in $\Kcal(t)^{1/2}$ gives the result.

For the approximations introduced below,
\[
 \|\rho_{0,n}-\rho_0\|_1
 \leq \|f_{0,n}-f_0\|_1\longrightarrow0,
\]
while \eqref{eq:integrated-kinetic-interpolation} gives a uniform
$L^{5/3}$ bound for $\rho_{0,n}$. Interpolation therefore yields
$\rho_{0,n}\to\rho_0$ strongly in $L^{6/5}$. In the periodic case,
$\bar\rho_n:=\iint f_{0,n}\to\bar\rho:=\iint f_0$, and hence
\[
 \rho_{0,n}^\circ-\rho_0^\circ
 = (\rho_{0,n}-\rho_0)-(\bar\rho_n-\bar\rho)
 \longrightarrow0
 \quad\text{in }L^{6/5}(\T^3).
\]
The Poisson estimate then gives $E_n(0)\to E(0)$ strongly in $L^2$.
Consequently the same constant $\Kstar$ may be chosen uniformly in $n$.
\end{proof}

\begin{proposition}[Macroscopic balances at finite energy]
\label{prop:macroscopic-balances}
Let $f$ be a weak solution of \eqref{eq:VP-system} on $[0,T]$ with
$f\in\Xcal_T(\Om)$.
Then
\[
 \rho\in L^\infty(0,T;L^1(\Om)\cap L^{5/3}(\Om)),
 \qquad
 j\in L^\infty(0,T;L^1(\Om)\cap L^{5/4}(\Om)),
\]
and
\[
 \Pi\in L^\infty(0,T;L^1(\Om)).
\]
Moreover,
\begin{equation}\label{eq:weak-mass-momentum}
 \partial_t\rho+\diver_xj=0,
 \qquad
 \partial_tj+\Div_x\Pi=\rho E
\end{equation}
in distributions on $(0,T)\times\Om$, with the corresponding initial
traces. In particular,
\[
 \rho\in C([0,T];\mathcal D'(\Om)),
 \qquad \rho(0)=\rho_0,
\]
the total mass is constant, and
\begin{equation}\label{eq:E-time-weak}
 \partial_tE=-\lamOm\gamma Q_{\Om}j
\end{equation}
in distributions.
\end{proposition}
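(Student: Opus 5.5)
The integrability statements come first and follow directly from \cref{lem:density-current}. For a.e.\ $t$, apply \eqref{eq:integrated-kinetic-interpolation} to $g=f(t)$ with $k=2$. The choice $\ell=0$ gives $\|\rho(t)\|_{5/3}\leq C\|f\|_\infty^{2/5}M_2(t)^{3/5}$, and $\ell=1$ gives $\|j(t)\|_{5/4}\leq\|m_1[f(t)]\|_{5/4}\leq C\|f\|_\infty^{1/5}M_2(t)^{4/5}$. The $L^1$ bounds on $\rho$ and $j$ are immediate from $(1+|v|^2)$-integrability, and $|\Pi|\leq m_2$ pointwise. All of these are controlled by $\|f\|_{\Xcal_T(\Om)}$. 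We also record that $E\in L^\infty(0,T;L^2\cap L^{15/4})$ by \cref{lem:poisson}. Hence $\rho E\in L^\infty(0,T;L^1)$ by H\"older with exponents $5/3$ and $5/2$; indeed $15/4\geq 5/2$, and on $\R^3$ we interpolate $E$ between $L^2$ and $L^{15/4}$.

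For the continuity equation, test \eqref{eq:weak-form} with $\varphi(t,x,v)=\psi(t,x)\chi(v/R)$, where $\chi\in C_c^\infty$ equals $1$ near the origin. The term $f\,\partial_t\psi\,\chi_R$ converges to $\rho\,\partial_t\psi$ by dominated convergence, and $f\,v\cdot\nabla_x\psi\,\chi_R$ converges to $j\cdot\nabla_x\psi$ likewise. The force term is bounded by $R^{-1}\|\nabla\chi\|_\infty\iint |E|\,|\psi| f\,dx\,dv\,dt$, which is finite by the bound on $\rho E$ and hence vanishes as $R\to\infty$. The initial term converges to $\int\rho_0\psi(0)$.

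The momentum equation is handled in the same way with $\varphi=\psi(t,x)v_i\chi(v/R)$, which is a legitimate test function because it has compact support in $v$. Now $\nabla_v\varphi=\psi(e_i\chi_R+v_iR^{-1}\nabla\chi(v/R))$. The term $E\cdot e_i\,\psi f\chi_R$ tends to $\rho E_i\psi$. The remainder is supported where $|v|\sim R$ and satisfies $|v_i|R^{-1}|\nabla\chi|\leq C\one_{|v|\sim R}$, so it is bounded by $\int|E||\psi|\int_{|v|\sim R}f\,dv$, which vanishes by dominated convergence with dominant $|E|\rho|\psi|\in L^1$. The transport term $f v_i v\cdot\nabla_x\psi\chi_R$ converges to $\Pi:\nabla\psi$ since $|v|^2f\in L^\infty_tL^1$. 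I expect this control of the force term, namely that $\rho E$ is integrable using only the $\Xcal_T$ bounds, to be the main obstacle. It is settled by the $L^{5/3}\times L^{5/2}$ pairing above, which uses finite energy and the $L^\infty$ bound but no higher moment.

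Weak continuity of $\rho$ follows from the continuity equation. For each $\phi\in C_c^\infty(\Om)$, the map $t\mapsto\int\rho\phi$ has distributional derivative $\int j\cdot\nabla\phi\in L^\infty(0,T)$, so it is Lipschitz. The initial trace $\rho_0$ is obtained by taking $\psi(t,x)=\theta(t)\phi(x)$ with $\theta(0)=1$ in the tested identity. Constancy of mass: on $\T^3$ take $\phi=1$. On $\R^3$ take $\phi(x/R)$ and let $R\to\infty$, using $j\in L^1$, so that $|\int j\cdot\nabla\phi_R|\leq R^{-1}\|j\|_1\to0$ while $\int\rho\phi_R\to\int\rho$ uniformly in $t$. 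Finally, \eqref{eq:E-time-weak} is obtained by applying the operator $\nabla(-\Delta_\Om)^{-1}$, which maps $\rhos$ to $E$ up to the factor $-\lamOm\gamma$, to the continuity equation. On the torus this is legitimate because $\bar\rho$ is constant, so $\partial_t\rhos=\partial_t\rho$. Concretely, pair $E$ with a test field $\Phi$ and write $\int E\cdot\Phi=\lamOm\gamma\int\rhos\,(-\Delta)^{-1}\diver\Phi$. The function $(-\Delta)^{-1}\diver\Phi$ is smooth with gradient in every $L^p$, $p>1$. We then use it, after a cutoff in $x$ on $\R^3$ if necessary, as the spatial test function in the continuity equation, which yields $\partial_t E=-\lamOm\gamma Q_\Om j$ with $Q_\Om j\in L^\infty(0,T;L^{5/4})$.
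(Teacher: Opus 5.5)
Your proposal is correct and follows the paper's proof essentially step by step. The steps match: kinetic interpolation for the integrability, testing the weak formulation with $\psi\chi_R$ and $\psi v_i\chi_R$ while controlling the force error through $\rho E\in L^1$, spatial cutoffs for mass conservation, and the continuity equation for $\partial_tE$. The differences are cosmetic: you pair $\rho\in L^{5/3}$ with $E\in L^{5/2}$ where the paper pairs $L^{15/11}$ with $L^{15/4}$, and you remove the momentum force tail by dominated convergence instead of the paper's quantitative $R^{-2/3}$ bound on $\rho_{>R}$ in $L^{15/11}$; also, the uniformity in $t$ you invoke for $\int\rho\phi_R\to\int\rho$ is not needed, since pointwise convergence for each fixed $t$ already suffices.
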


\begin{proof}
The asserted regularity follows from \cref{lem:density-current},
Cauchy--Schwarz with respect to the measure $f\dd x\dd v$, and the second
moment. In particular,
\[
 \|j(t)\|_1
 \leq\left(\iint f(t)\right)^{1/2}
       \left(\iint |v|^2f(t)\right)^{1/2},
 \qquad
 \|\Pi(t)\|_1\leq\iint |v|^2f(t).
\]
By \cref{lem:density-current,lem:poisson},
$\rho\in L^\infty_t(L^1_x\cap L^{5/3}_x)$ and
$E\in L^\infty_t(L^2_x\cap L^{15/4}_x)$. Interpolating $\rho$ between
$L^1$ and $L^{5/3}$ gives
\begin{equation}\label{eq:rho-1511-general}
 \|\rho(t)\|_{15/11}
 \leq \|\rho(t)\|_1^{1/3}\|\rho(t)\|_{5/3}^{2/3},
\end{equation}
so
\begin{equation}\label{eq:rhoE-general-L1}
 \rho E\in L^1((0,T)\times\Om).
\end{equation}

Choose $\chi\in C_c^\infty(\R^3)$ such that $0\leq\chi\leq1$,
$\chi=1$ on $B_1$, and $\supp\chi\subset B_2$, and put
$\chi_R(v)=\chi(v/R)$. Testing \eqref{eq:weak-form} with
$\eta(t,x)\chi_R(v)$, the force-cutoff error is bounded by
\[
 \frac{C}{R}\int_0^T\int_{\Om}|E|\rho\dd x\dd t,
\]
and therefore tends to zero by \eqref{eq:rhoE-general-L1}. The remaining
terms converge by the first-moment bound and dominated convergence. This
proves the continuity equation with initial trace
$\rho_0=\int f_0\dd v$.

Fix $i\in\{1,2,3\}$ and use the test function
\[
 \varphi_R(t,x,v)=\eta(t,x)v_i\chi_R(v).
\]
The time and transport terms converge to the corresponding terms involving
$j_i$ and the $i$th row of $\Pi$. For the force term,
\[
 \nabla_v(v_i\chi_R)=e_i\chi_R+\frac{v_i}{R}\nabla\chi(v/R).
\]
The difference between the first term and $\rho E_i$ is bounded by
$|E|\rho_{>R}$. The second term has the same bound, up to a constant,
because it is supported where $R<|v|<2R$ and $|v_i|/R\leq2$. The second
moment gives
\[
 \|\rho_{>R}\|_{L^\infty(0,T;L^1(\Om))}
 \leq R^{-2}\operatorname*{ess\,sup}_{0<t<T}M_2(t),
\]
while \eqref{eq:integrated-kinetic-interpolation}, applied to
$f\one_{\{|v|>R\}}$, gives an $R$-independent
$L^\infty(0,T;L^{5/3}(\Om))$ bound. Interpolation therefore yields
\[
 \|\rho_{>R}\|_{L^\infty(0,T;L^{15/11}(\Om))}
 \leq C_{\|f\|_{\Xcal_T(\Om)}}R^{-2/3}.
\]
Consequently,
\[
 \int_0^T\int_{\Om}|E|\rho_{>R}\dd x\dd t
 \leq \|E\|_{L^1(0,T;L^{15/4})}
       \|\rho_{>R}\|_{L^\infty(0,T;L^{15/11})}
 \longrightarrow0.
\]
The time term is controlled by $|v|f$, the transport term by $|v|^2f$,
and the initial term by the finite first moment of $f_0$. This proves the
momentum equation and its initial trace.

Mass conservation follows by testing the continuity equation with spatial
cutoffs tending to one; in the whole-space case the boundary term vanishes
because $j\in L^\infty_tL^1_x$. The distributional equations also give the
stated weak time continuity. Finally, differentiating the Poisson equation
and using the continuity equation yields \eqref{eq:E-time-weak}.
\end{proof}

\subsection{Approximation and compactness}

Choose nonnegative smooth data $f_{0,n}$ by truncation and mollification. In
$\R^3$ we truncate in $(x,v)$; on $\T^3$ we truncate only in $v$ and
mollify periodically in $x$. Set
\[
 M_n:=\iint_{\Om\times\R^3}f_{0,n}(x,v)\,\dd x\dd v.
\]
We may arrange
\[
 f_{0,n}\longrightarrow f_0
 \quad\text{in }L^p(\Om\times\R^3),\qquad 1\leq p<\infty,
\]
and
\[
 M_n\leq M,\qquad \|f_{0,n}\|_\infty\leq F.
\]
Whenever $M_k(f_0)<\infty$ for some $k\geq2$, the approximation may
additionally be chosen so that
\[
 M_k(f_{0,n})\longrightarrow M_k(f_0).
\]
In particular, $M_n\to M$.

Let $f_n$ be the corresponding global classical solution and define
\[
 \rho_n(t,x):=\int_{\R^3}f_n(t,x,v)\,\dd v.
\]
In the whole-space case set $\rho_n^\circ:=\rho_n$. In the periodic case set
\[
 \bar\rho_n:=\int_{\T^3}\rho_n(t,x)\,\dd x=M_n,
 \qquad
 \rho_n^\circ:=\rho_n-\bar\rho_n.
\]
The equality $\bar\rho_n=M_n$ follows from conservation of mass. Define
\[
 E_n=-\nabla U_n,
 \qquad
 -\Delta U_n=\lamOm\gamma\rho_n^\circ,
\]
with $U_n\to0$ at infinity in $\R^3$ and
$\int_{\T^3}U_n\,\dd x=0$ on the torus. Thus every periodic approximate
Poisson equation has a mean-zero right-hand side, and
$\bar\rho_n\to\bar\rho$. The standard global classical existence and
continuation theory applies to these smooth whole-space and periodic
problems; the continuation bounds depend on $|E_n|$ and are therefore
unchanged when the sign of $\gamma$ is reversed
\cite{BattRein1991,Pfaffelmoser1992,Schaeffer1991}. Consequently, each
$f_n$ is a global classical solution.

The characteristic flows preserve phase volume, and therefore
\[
 \|f_n(t)\|_1=M_n\leq M,
 \qquad
 \|f_n(t)\|_\infty\leq F.
\]
The uniform kinetic-energy bound follows separately from
\cref{lem:uniform-K}:
\[
 \sup_n\sup_{t\geq0}\Kcal_n(t)\leq\Kstar.
\]
By \eqref{eq:integrated-kinetic-interpolation}, \cref{lem:poisson}, and the
centered-density estimate in that lemma, for every $T>0$,
\[
 \sup_n\|\rho_n\|_{L^\infty(0,T;L^1(\Om)\cap L^{5/3}(\Om))}<\infty,
 \qquad
 \sup_n\|E_n\|_{L^\infty(0,T;L^2(\Om)\cap L^{15/4}(\Om))}<\infty.
\]

The compactness construction uses two standard inputs, which we record in
the form needed below.

\begin{lemma}[Spatial tightness in the whole space]\label{lem:spatial-tightness}
Assume $\Om=\R^3$. There exists an increasing function
$0\leq G\in C^1([0,\infty))$ such that $0\leq G'\leq1$, $G(R)\to\infty$, and, for
every $T>0$,
\[
 \sup_n\sup_{0\leq t\leq T}
 \iint_{\R^3\times\R^3}G(|x|)f_n(t,x,v)\dd x\dd v<\infty.
\]
Consequently,
\[
 \lim_{R\to\infty}\sup_n
 \int_0^T\int_{|x|>R}\rho_n(t,x)\dd x\dd t=0.
\]
\end{lemma}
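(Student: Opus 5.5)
The plan is to construct $G$ from the initial datum by a de la Vallée-Poussin argument, and then propagate the weighted moment along the flow. Since $f_0\in L^1(\R^3\times\R^3)$ and $f_{0,n}\to f_0$ in $L^1$, the family $\{f_{0,n}\}$ is uniformly integrable and tight in $x$. Hence
\[
 \sup_n\iint_{|x|>R}f_{0,n}\dd x\dd v\longrightarrow0
 \qquad\text{as }R\to\infty .
\]
By the de la Vallée-Poussin lemma applied to the tail function $R\mapsto\sup_n\iint_{|x|>R}f_{0,n}$, there is an increasing $C^1$ function $G\geq0$ with $G(R)\to\infty$ and $\sup_n\iint G(|x|)f_{0,n}<\infty$. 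We may slow $G$ down so that $0\leq G'\leq1$; for instance, replace $G$ by a smoothed version of $\min\{G(R),R\}$. Smoothing $|x|$ near the origin, for example through $G(\langle x\rangle)$, ensures $\nabla_x G(|x|)$ is bounded by $1$.

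Next, propagate the weighted moment for the classical solutions $f_n$. Differentiating and using the Vlasov equation, integrating by parts in $x$ (justified since $f_n$ is smooth and compactly supported in the $x$-truncated case), gives
\[
 \frac{\dd}{\dd t}\iint G(|x|)f_n\dd x\dd v
 =\iint G'(|x|)\frac{x}{|x|}\cdot v\,f_n\dd x\dd v
 \leq\iint|v|f_n\dd x\dd v .
\]
The right-hand side is bounded by $M^{1/2}(2\Kstar)^{1/2}$ via Cauchy--Schwarz and \cref{lem:uniform-K}. The force term vanishes because $G$ does not depend on $v$. Integrating on $[0,T]$ yields
\[
 \sup_n\sup_{t\leq T}\iint Gf_n\leq\sup_n\iint Gf_{0,n}+T(2M\Kstar)^{1/2}<\infty .
\]

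Finally, since $G$ is increasing, Chebyshev's inequality gives
\[
 \int_{|x|>R}\rho_n(t)\dd x\leq G(R)^{-1}\iint Gf_n .
\]
Integrating in time then bounds the space--time tail by $T\,G(R)^{-1}\sup_{n,t}\iint Gf_n$, which tends to $0$ as $R\to\infty$.

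The only delicate step is the uniform choice of $G$ with $G'\leq1$. I would handle it by defining $\varepsilon(R):=\sup_n\iint_{|x|>R}f_{0,n}$, choosing radii $R_k\uparrow\infty$ with $R_{k+1}-R_k\geq1$ and $\varepsilon(R_k)\leq2^{-k}$, and letting $G$ increase by at most $1$ on each $[R_k,R_{k+1}]$, with a $C^1$ smoothing. Then
\[
 \iint Gf_{0,n}\leq G(R_1)M+\sum_k(k+1)2^{-k}<\infty ,
\]
and $G(R)\to\infty$ because $G$ gains a fixed amount on each interval.
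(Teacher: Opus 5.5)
Your proof is correct. Its propagation and Chebyshev steps coincide with the paper's: the bound $\frac{\dd}{\dd t}\iint G(|x|)f_n\leq\iint|v|f_n\leq M^{1/2}(2\Kstar)^{1/2}$, followed by the tail estimate. The paper gets the derivative bound along characteristics, using only that $x\mapsto G(|x|)$ is one-Lipschitz. You integrate by parts using the compact support of $f_n(t)$, which is equivalent here.

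The genuine difference is how you obtain the uniform initial bound $\sup_n\iint G(|x|)f_{0,n}<\infty$. The paper takes $G$ from the Horst--Hunze slowly-increasing-weight lemma applied to $\rho_0$ alone. It then transfers the bound to the approximations through the structure of the scheme: the spatial mollifier is supported in $B_{1/n}$ and $G$ is one-Lipschitz, so $\int G\rho_{0,n}\leq\int(G+1)\rho_0$. You instead build $G$ directly from the uniform tail $\varepsilon(R)=\sup_n\iint_{|x|>R}f_{0,n}$, which tends to zero merely because $f_{0,n}\to f_0$ in $L^1$, using an explicit de la Vall\'ee-Poussin layering. Your route is self-contained and works for any $L^1$-convergent approximating sequence, but the weight depends on the sequence rather than on $f_0$ alone. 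The paper's weight is intrinsic to the datum, but its argument relies on the mollification details and an external lemma.

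Two small corrections. First, drop the aside about smoothing $\min\{G(R),R\}$: it does not enforce $G'\leq1$, because wherever $G<R$ the minimum is $G$ itself. The one-Lipschitz lower envelope $\inf_s\bigl(G(s)+|R-s|\bigr)$ would work, but your explicit layered construction makes this unnecessary. Second, in that construction let $G$ gain somewhat less than the interval length on each $[R_k,R_{k+1}]$, say $1/2$ per interval. Then the $C^1$ smoothing can keep $G'\leq1$; with gain exactly $1$ over an interval of length exactly $1$, the slope is forced to equal $1$ everywhere, which leaves no room to smooth.
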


\begin{proof}
Let \(G\) be the function furnished by the slowly increasing-weight
lemma of Horst and Hunze \cite[Lemma~4.4]{HorstHunze1984} with
$\int_{\R^3}G(|x|)\rho_0(x)\dd x<\infty$. Let $\eta_n^x$ denote the spatial mollifier, chosen with
$\supp\eta_n^x\subset B_{1/n}$. Since $G$ is one-Lipschitz,
\begin{align}
 \int_{\R^3}G(|x|)\rho_{0,n}(x)\dd x&\leq \iint_{\R^3\times\R^3}G(|y+z|)\eta_n^x(z)\rho_0(y)\dd z\dd y\nonumber\\
 &\leq \int_{\R^3}\bigl(G(|y|)+1\bigr)\rho_0(y)\dd y,\nonumber
\end{align}
so the weighted initial data are uniformly controlled. Since
$x\mapsto G(|x|)$ is one-Lipschitz, its composition with every absolutely
continuous characteristic is absolutely continuous and satisfies
\[
 \left|\frac{\dd}{\dd t}G(|X_n(t)|)\right|\leq |V_n(t)|
\]
for almost every time. Consequently,
\[
 \frac{\dd}{\dd t}\iint G(|x|)f_n(t,x,v)\dd x\dd v
 \leq\iint |v|f_n(t,x,v)\dd x\dd v
 \leq M^{1/2}(2\Kstar)^{1/2}
\]
for almost every $t$.
Integrating from $0$ to $t\leq T$ proves the first assertion. On
$\{|x|>R\}$ one has $G(|x|)\geq G(R)$; hence
\[
 \int_0^T\int_{|x|>R}\rho_n\dd x\dd t
 \leq \frac{T}{G(R)}
 \sup_{0\leq t\leq T}\iint G(|x|)f_n(t,x,v)\dd x\dd v,
\]
which proves the spatial-tail estimate.
\end{proof}

\begin{lemma}[Velocity averaging]\label{lem:velocity-averaging}
Let $\Om\in\{\R^3,\T^3\}$ and let
$h,g,H\in L^2((0,T)\times\Om\times\R^3)$ satisfy
\[
 \partial_th+v\cdot\nabla_xh=g+\diver_vH
\]
in distributions. Then, for every $\psi\in C_c^\infty(\R^3)$,
\[
 \int_{\R^3}h(t,x,v)\psi(v)\dd v
 \in L^2(0,T;H^{1/4}(\Om)),
\]
with a bound depending only on $\psi$ and the three $L^2$ norms. The
Euclidean statement is due to
Bouchut--Desvillettes and Golse--Lions--Perthame--Sentis; the periodic
version follows by Fourier series or periodic localization
\cite{BouchutDesvillettes1999,GLPS1988}.
\end{lemma}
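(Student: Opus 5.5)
The plan is to reduce to the whole-line statements of Bouchut--Desvillettes and Golse--Lions--Perthame--Sentis. We extend $h$ by zero outside $(0,T)$ and keep track of the endpoint traces this creates. The interior part then comes from the cited lemma, and the traces are treated as initial/final data for free transport, whose velocity averages are estimated by hand. In the periodic case everything is done with Fourier series in $x$ ($k\in\mathbb Z^3$) instead of the Fourier transform, and the frequency $k=0$ is trivial. So we work with $\hat h(t,k,v)$ throughout and never change the time interval $(0,T)$.

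\emph{Step 1 (traces).} From $\partial_th=-v\cdot\nabla_xh+g+\diver_vH$ with $h,g,H\in L^2$, the function $t\mapsto\psi_1(v)\hat h(t,k,v)$ has, for each fixed $k$ and each $\psi_1\in C_c^\infty(\R^3)$, a time derivative in $L^2_tH^{-1}_v$ with bound $\lesssim(1+|k|)$. Hence $h$ has weakly continuous representatives and traces $h(0),h(T)$. Choosing a good time $t_*\in(0,T)$ with $\|h(t_*)\|_{L^2}^2\le T^{-1}\|h\|_{L^2}^2$ and integrating the equation between $0$ and $t_*$, we will show that after localisation in $v$,
\[
 h(0)=a_0+\diver_vb_0,
 \qquad
 \|a_0\|_{L^2}+\|b_0\|_{L^2}\le C\bigl(\|h\|_2+\|g\|_2+\|H\|_2\bigr),
\]
modulo a term carrying one $x$-derivative times $t_*$. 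That term is kept in Duhamel form rather than estimated crudely. The same decomposition holds at $t=T$.

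\emph{Step 2 (extension).} Let $\bar h=\one_{(0,T)}h$, and similarly $\bar g$, $\bar H$. In $\mathcal D'(\R\times\Om\times\R^3)$,
\[
 \partial_t\bar h+v\cdot\nabla_x\bar h
 =\bar g+\diver_v\bar H+\delta_{t=0}\otimes h(0)-\delta_{t=T}\otimes h(T).
\]
By linearity we write $\bar h=h_1+h_2$. Here $h_1$ solves the problem with sources $\bar g+\diver_v\bar H$; after multiplying by a fixed $\theta\in C_c^\infty(\R)$ equal to $1$ on $[0,T]$, the whole-line lemma gives $\int h_1\psi\dd v\in L^2(\R;H^{1/4})$. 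The other piece $h_2$ is the free transport $\one_{t>0}e^{-tv\cdot\nabla_x}h(0)$, minus the analogous term started at $T$.

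\emph{Step 3 (kinetic smoothing of the trace terms).} This is the main obstacle. For the $L^2$ part $a_0$, we use the classical dispersion identity: for each $k\neq0$,
\[
 \int_0^\infty\Bigl|\int e^{-itv\cdot k}a_0(k,v)\psi(v)\dd v\Bigr|^2\dd t
 \le\frac{C_\psi}{|k|}\|a_0(k,\cdot)\|_{L^2_v}^2 .
\]
It follows from Plancherel along the line $\{sk/|k|\}$ in the $v$-Fourier variable, and it places this contribution in $L^2_tH^{1/2}$. For the divergence part $\diver_vb_0$, the commutator identity
\[
 e^{-tv\cdot\nabla_x}\diver_vb_0
 =\diver_v\bigl(e^{-tv\cdot\nabla_x}b_0\bigr)+t\,\nabla_x\cdot e^{-tv\cdot\nabla_x}b_0
\]
splits it into two terms. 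The first term, after integration by parts onto $\psi$, is of the same type as the $a_0$ term and therefore in $L^2_tH^{1/2}$. The second term, restricted to $0<t<T$, carries the factor $t|k|$. On frequencies $|k|\le1/T$ it is harmless. On high frequencies it gives an $H^{1/2}$-type gain of the form $\|b_0\|^2\,|k|^{-1}(T|k|)^2$ over part of the time range, which is too weak in isolation. There I will interpolate with the trivial $L^2_tL^2_x$ bound of the average, which holds because $h\in L^2$, to land at exactly $H^{1/4}$. If this interpolation fails to close, the fallback is to avoid the $\diver_v$-part of the trace altogether. We would select the good time $t_*$ separately for every dyadic frequency block $|k|\sim2^j$ inside an interval of length $\sim2^{-j/2}$. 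We then run the forward estimate on $[t_*,T]$ and the backward one (via $t\mapsto t_*-t$, $v\mapsto-v$) on $[0,t_*]$, each with an $L^2$ datum. The short interval has length $2^{-j/2}$, which is precisely the scale at which the $L^2_t$ norm of the average over that interval is bounded by $2^{-j/4}\|h\|$, that is, by $H^{1/4}$. Summing the dyadic blocks then gives the claimed $L^2(0,T;H^{1/4}(\Om))$ bound, depending only on $\psi$ and the three $L^2$ norms.
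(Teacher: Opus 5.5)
\textbf{Verdict: there is a genuine gap.} The paper gives no argument here beyond citing the whole-space averaging lemma and asserting the periodic case by Fourier series. You are right that on a bounded interval, with only $h,g,H\in L^2$, there is a real time-boundary problem. Neither of your two routes resolves it.

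\textbf{The main route (Steps 1--3).} The trace $h(0)$ is in general \emph{not} in $L^2$. With a $\diver_vH$ source there is no $L^\infty_tL^2$ bound, and $h(0)$ lies only in $L^2+\diver_vL^2+\diver_xL^2$. This has two consequences.
\begin{itemize}
\item $h_2=\one_{t>0}e^{-tv\cdot\nabla_x}h(0)$ is not in $L^2_{t,x,v}$, so neither is $h_1=\bar h-h_2$. The whole-line lemma, whose near-resonant part is bounded by $\|h_1\|_{L^2}$, therefore cannot be applied to $\theta h_1$. The same happens at $t=T$.
\item In Step 3, your dispersion identity applied to $t\,\nabla_x\cdot e^{-tv\cdot\nabla_x}b_0$ gives at best $T^2|k|\,\|\hat b_0(k)\|^2$. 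That is a \emph{loss} of half a derivative, and interpolating a loss with an $L^2$ bound cannot produce a positive gain. Moreover, the trivial bound $\|\int h\psi\dd v\|_{L^2}\leq C\|h\|_{L^2}$ holds for the sum of the pieces, not for this non-$L^2$ piece.
\end{itemize}
(In the paper's only application, $h=\chi_Rf_n$ has $L^2$ traces at $t=0$ and $t=T$, bounded by $(FM)^{1/2}$. There your Steps 2--3 do work with $b_0=0$.)

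\textbf{The fallback.} It uses the right scale $\tau_j=2^{-j/2}$. With $P_j$ denoting the projection onto $|k|\sim2^j$, its decisive claim (an $O(2^{-j/4}\|h\|)$ bound for the average on the layer) is only asserted. As written, with $\|h\|$ alone on the right, it is false: the sources can concentrate $h$ inside the layer. There are two further problems.
\begin{itemize}
\item The ``forward estimate on $[t_*,T]$'' still faces a bad trace at $T$.
\item Free-transporting $P_jh(t_*)$ over all of $[t_*,T]$ has $L^2$ norm of order $T^{1/2}2^{j/4}\|P_jh\|$. This destroys the near-resonant term.
\end{itemize}

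\textbf{What closes the argument.}

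\emph{Interior.} Use a cutoff $\theta_j$ equal to $1$ on $(2\tau_j,T-2\tau_j)$, supported in $(\tau_j,T-\tau_j)$, with $|\theta_j'|\lesssim\tau_j^{-1}$. The commutator $\theta_j'P_jh$ is an $L^2$ source of size $2^{j/2}\|P_jh\|$. It is absorbed exactly by the frequency-localized whole-line bound for the average of $\theta_jh$,
\[
 |\hat\rho(\omega,k)|^2\lesssim\frac{\lambda}{|k|}\|\hat h\|_{L^2_v}^2+\frac{1}{\lambda|k|}\|\hat g\|_{L^2_v}^2+\Bigl(\frac{1}{\lambda|k|}+\frac{|k|}{\lambda^3}\Bigr)\|\hat H\|_{L^2_v}^2,
 \qquad \lambda=2^{j/2}.
\]

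\emph{Boundary layer $(0,2\tau_j)$.} Choose a good time $t_*\in(2\tau_j,3\tau_j)$ with $\|P_jh(t_*)\|^2\leq\tau_j^{-1}\|P_jh\|^2$, and write the Duhamel formula backward from $t_*$.
\begin{itemize}
\item The free part costs $2^{-j}\|P_jh(t_*)\|^2\lesssim2^{-j/2}\|P_jh\|^2$, by your dispersion identity.
\item The $g$-part costs $\tau_j^2\|P_jg\|^2$, by Cauchy--Schwarz.
\item In the $\diver_vH$-part, integrate by parts in $v$. This produces the factors $k(t-s)\psi$ and $\nabla_v\psi$, with $|k|\,|t-s|\lesssim2^{j/2}$. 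Apply the dispersion identity for each fixed $s$, then Minkowski in $s$, to get $\lesssim\tau_j\|P_jH\|^2=2^{-j/2}\|P_jH\|^2$.
\end{itemize}
Treat $(T-2\tau_j,T)$ symmetrically, by forward Duhamel from a good time in $(T-3\tau_j,T-2\tau_j)$. Summing over $j$ with weight $2^{j/2}$ then gives the claimed bound.

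This Duhamel--dispersion--Minkowski estimate on the layer is the missing step.
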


The following compactness statement collects mostly standard consequences
of the finite-energy approximation framework. For the reader's convenience,
we include a brief proof, keeping track of the precise convergence properties
needed later.

\begin{proposition}[Finite-energy compactness]\label{prop:compactness}
After extraction of a subsequence and a diagonal argument in $T$, there is a
global nonnegative renormalized weak solution $(f,E)$ with the following
properties on every finite interval $[0,T]$:
\begin{enumerate}[label=\textup{(\roman*)},leftmargin=2.6em]
\item The phase-space densities converge as
\[
 f_n\rightharpoonup f
 \quad\text{in }L^1((0,T)\times\Om\times\R^3),
 \qquad
 f_n\stackrel{*}{\rightharpoonup}f
 \quad\text{in }L^\infty((0,T)\times\Om\times\R^3).
\]

\item The spatial densities satisfy
\begin{align}
 \rho_n&\to\rho\quad
 \text{strongly in }L^1((0,T)\times\Om),
 \label{eq:rho-strong1}\\
 \rho_n&\to\rho\quad
 \text{strongly in }L^2(0,T;L^{6/5}(\Om)).
 \label{eq:rho-mixed}
\end{align}

\item The fields satisfy
\begin{equation}\label{eq:E-strong2}
 E_n\to E
 \quad\text{strongly in }L^2((0,T)\times\Om).
\end{equation}

\item The limit obeys
\begin{equation}\label{eq:limit-M2}
 \begin{gathered}
  \|f(t)\|_{L^1(\Om\times\R^3)}=M,\qquad
  \operatorname*{ess\,sup}_{0<t<T}M_2(t)\leq2\Kstar,
 \end{gathered}
\end{equation}
and
\begin{equation}\label{eq:f-strong-time}
 f\in C([0,T];L^p(\Om\times\R^3)),
 \qquad1\leq p<\infty.
\end{equation}
\end{enumerate}
\end{proposition}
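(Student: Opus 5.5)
We follow the standard finite-energy compactness scheme, organised around the uniform bounds recorded just before the statement: $\|f_n\|_1\leq M$, $\|f_n\|_\infty\leq F$, $\sup_n\sup_t\Kcal_n\leq\Kstar$, together with the uniform bounds on $\rho_n$ in $L^\infty_t(L^1\cap L^{5/3})$ and on $E_n$ in $L^\infty_t(L^2\cap L^{15/4})$. The steps are ordered so that each mode of convergence is available when the next one needs it.

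\emph{Item (i).} Weak compactness of $f_n$ is immediate. The $L^\infty$ bound gives weak-$*$ compactness. Equi-integrability in $L^1((0,T)\times\Om\times\R^3)$ follows from the $L^\infty$ bound, the uniform second moment (tightness in $v$), and, when $\Om=\R^3$, \cref{lem:spatial-tightness} (tightness in $x$). Dunford--Pettis then gives the two convergences in (i) along a subsequence, and a diagonal argument over $T\in\mathbb N$ makes the subsequence independent of $T$.

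\emph{Item (ii).} For strong convergence of $\rho_n$ we apply \cref{lem:velocity-averaging} with
\[
 h=f_n\one_{\{|v|<R'\}}\text{-type localisations},\qquad g=0,\qquad H=-E_nf_n .
\]
Here $E_nf_n\in L^2$ locally in $v$ because $E_n\in L^\infty_tL^2_x$ and $f_n\leq F$. To stay within $L^2$ we multiply by a velocity cutoff $\psi\in C_c^\infty$, i.e.\ average $f_n\psi(v)$, so that only local-in-$v$ information is needed. This gives $\int f_n\psi\dd v$ bounded in $L^2_tH^{1/4}_x$. The time derivative is controlled through the kinetic equation in a negative Sobolev space, so Aubin--Lions, or simply the $H^{1/4}$ bound combined with time equicontinuity, gives strong local $L^1$ compactness of $\int f_n\psi\dd v$. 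We then remove $\psi$ using the second-moment tail bound $\int_{|v|>R}f_n\leq R^{-2}\cdot2\Kstar$, and remove spatial localisation using \cref{lem:spatial-tightness}; this yields \eqref{eq:rho-strong1}. Interpolating the strong $L^1_{t,x}$ convergence against the uniform $L^\infty_tL^{5/3}_x$ bound gives convergence in $L^p_tL^q_x$ for $q<5/3$, in particular \eqref{eq:rho-mixed}.

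\emph{Items (iii) and (iv).} By \cref{lem:poisson}, $\|E_n-E\|_2\leq C\|\rho_n^\circ-\rho^\circ\|_{6/5}$, using $\bar\rho_n\to\bar\rho$ on the torus; \eqref{eq:rho-mixed} therefore yields \eqref{eq:E-strong2}. With $E_n\to E$ strongly in $L^2$ and $f_n\rightharpoonup f$ weakly, we pass to the limit in the product $E_nf_n$ in the weak formulation \eqref{eq:weak-form} tested against compactly supported $\varphi$, so $f$ is a weak solution. The bounds in \eqref{eq:limit-M2} follow from lower semicontinuity of $M_2$ under weak convergence. Mass equality holds for a.e.\ $t$ by tightness, since no mass escapes, and for every $t$ once time continuity is established.

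\emph{Main obstacle: renormalization and \eqref{eq:f-strong-time}.} Because $E\in L^1_tW^{1,q}_x$ locally (by \cref{lem:poisson}, using $\rho\in L^\infty_tL^{5/3}$) and $E$ is independent of $v$, the DiPerna--Lions theory applies. The limit weak solution is therefore renormalized and has a unique Lagrangian flow, hence is the unique renormalized solution with this field. Time continuity $f\in C([0,T];L^1)$ is standard in DiPerna--Lions theory via renormalization with $\beta(f)=f^2$, which shows the $L^2$ norm is constant in time. Weak continuity plus norm continuity gives strong continuity in $L^2$. Interpolation with the $L^1$ tightness and the $L^\infty$ bound then extends this to $1\leq p<\infty$. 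In the whole space, the required commutator estimates need only local integrability, with tightness handling infinity.
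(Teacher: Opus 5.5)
Your proposal is correct and follows the paper's route: Dunford--Pettis for $f_n$; velocity averaging on velocity-truncated averages, plus Aubin--Lions through the truncated continuity equation; removal of the cutoffs by the second-moment tail and spatial tightness; interpolation against the $L^\infty_tL^{5/3}_x$ bound to get $L^2_tL^{6/5}_x$ convergence; the Poisson estimate; weak--strong passage to the limit; and DiPerna--Lions for renormalization and strong time continuity. Two small bookkeeping points. First, once you localize with a smooth velocity cutoff, $h=\chi f_n$ solves the averaging equation with $H=-E_n\chi f_n$ and a nonzero source $g=E_nf_n\cdot\nabla_v\chi$, not $g=0$; this matters because $E_nf_n$ itself is not controlled in $L^2_{t,x,v}$ by the available bounds. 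Second, in $\R^3$ the paper verifies the DiPerna--Lions growth condition using $E\in L^\infty_tL^3_x$, whereas you use local commutator estimates plus control of the flux at infinity ($|v|f\in L^1$, $\rho E\in L^1$), which also works.
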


\begin{proof}
Fix $T>0$. All subsequences below are extracted on $[0,T]$; a diagonal
argument over integer values of $T$ will be used at the end.

\emph{Step 1: weak compactness in phase space.}
The second-moment estimate gives
\[
 \int_0^T\iint_{|v|>R}f_n\,\dd x\dd v\dd t
 \leq \frac{2T\Kstar}{R^2}.
\]
In the whole-space case, \cref{lem:spatial-tightness} also controls the
spatial tails; on the torus no spatial-tail estimate is needed. Together
with $0\leq f_n\leq F$, these bounds give uniform integrability and
tightness on $(0,T)\times\Om\times\R^3$. By the Dunford--Pettis theorem and
Banach--Alaoglu, after extraction,
\[
 f_n\rightharpoonup f\quad\text{in }L^1,
 \qquad
 f_n\stackrel{*}{\rightharpoonup}f\quad\text{in }L^\infty,
\]
with $0\leq f\leq F$.

\emph{Step 2: compactness of truncated velocity averages.}
Fix $R>0$, choose $\chi_R\in C_c^\infty(\R^3)$ with
$0\leq\chi_R\leq1$, $\chi_R=1$ on $B_R$, and
$\supp\chi_R\subset B_{2R}$, and set $h_{n,R}=\chi_Rf_n$. Then
\[
 \partial_th_{n,R}+v\cdot\nabla_xh_{n,R}
 =-\diver_v(E_nh_{n,R})+E_nf_n\cdot\nabla_v\chi_R.
\]
For fixed $R$,
\[
 \|h_{n,R}\|_{L^2_{t,x,v}}^2\leq FTM,
\]
and, using the uniform $L^2_{t,x}$ bound for $E_n$,
\[
 \|E_nh_{n,R}\|_{L^2_{t,x,v}}
 +\|E_nf_n\cdot\nabla_v\chi_R\|_{L^2_{t,x,v}}
 \leq C_RF\|E_n\|_{L^2_{t,x}}.
\]
Hence \cref{lem:velocity-averaging} gives
\[
 \rho_{n,R}:=\int_{\R^3}\chi_R(v)f_n(t,x,v)\,\dd v
 \quad\text{bounded in }L^2(0,T;H^{1/4}(\Om)).
\]
Moreover, with
\[
 J_{n,R}:=\int_{\R^3}v\chi_Rf_n\,\dd v,
 \qquad
 S_{n,R}:=E_n\cdot\int_{\R^3}f_n\nabla_v\chi_R\,\dd v,
\]
we have
\[
 \partial_t\rho_{n,R}+\diver_xJ_{n,R}=S_{n,R}.
\]
The fixed velocity support, the mass bound, and $f_n\leq F$ imply
$J_{n,R}$ is uniformly bounded in $L^2((0,T)\times\Om)$, while
$S_{n,R}$ is uniformly bounded in the same space. Thus
$\partial_t\rho_{n,R}$ is bounded in $L^2(0,T;H^{-1}(\Om))$. Aubin--Lions
therefore yields strong compactness of $\rho_{n,R}$ in
$L^2((0,T)\times\T^3)$ on the torus and in
$L^2((0,T)\times K)$ for every compact $K\Subset\R^3$ in the whole-space
case. The weak convergence of $f_n$ identifies the strong limit as
\[
 \rho_R(t,x):=\int_{\R^3}\chi_R(v)f(t,x,v)\,\dd v.
\]

\emph{Step 3: removal of the velocity and spatial cutoffs.}
Weak lower semicontinuity, applied first to bounded velocity truncations,
gives
\[
 \int_0^TM_2(f(t))\,\dd t\leq2T\Kstar.
\]
Consequently,
\[
 \|\rho_n-\rho_{n,R}\|_{L^1((0,T)\times\Om)}
 +\|\rho-\rho_R\|_{L^1((0,T)\times\Om)}
 \leq \frac{4T\Kstar}{R^2}.
\]
On the torus this, together with the strong convergence of $\rho_{n,R}$,
gives \eqref{eq:rho-strong1}. In $\R^3$, one first works on a fixed ball
and then removes the spatial cutoff by \cref{lem:spatial-tightness}; the
same spatial-tail bound passes to $f$ by lower semicontinuity. We again
obtain \eqref{eq:rho-strong1}, and the construction identifies
$\rho=\int f\,\dd v$. Since $\int_{\Om}\rho_n(t,x)\,\dd x=M_n$ for
every $t$,
\[
 \int_0^T
 \left|\int_{\Om}\rho(t,x)\,\dd x-M\right|\dd t
 \leq \|\rho_n-\rho\|_{L^1((0,T)\times\Om)}
      +T|M_n-M|\longrightarrow0.
\]
Hence $\int_{\Om}\rho(t,x)\,\dd x=M$ for almost every $t$, and
$f\in L^\infty(0,T;L^1(\Om\times\R^3))$.

\emph{Step 4: stronger density convergence and convergence of the fields.}
Let $r_n=\rho_n-\rho$. The uniform $L^\infty_tL^{5/3}_x$ bounds and
interpolation give
\[
 \|r_n(t)\|_{6/5}^2
 \leq \|r_n(t)\|_1^{7/6}\|r_n(t)\|_{5/3}^{5/6}
 \leq C\|r_n(t)\|_1^{7/6}.
\]
Since $\|r_n(t)\|_1$ is uniformly bounded,
\[
 \int_0^T\|r_n(t)\|_{6/5}^2\,\dd t
 \leq C\int_0^T\|r_n(t)\|_1\,\dd t\longrightarrow0,
\]
which proves \eqref{eq:rho-mixed}. In the whole-space case this immediately
implies \eqref{eq:E-strong2}. On the torus,
\[
 \rho_n^\circ-\rho^\circ
 = (\rho_n-\rho)-(\bar\rho_n-\bar\rho),
\]
and hence
\[
 \|\rho_n^\circ-\rho^\circ\|_{L^2(0,T;L^{6/5})}
 \leq \|\rho_n-\rho\|_{L^2(0,T;L^{6/5})}
      +T^{1/2}|\bar\rho_n-\bar\rho|\longrightarrow0.
\]
The Poisson estimate proves \eqref{eq:E-strong2} in the periodic case as
well.

\emph{Step 5: the limit equation, the second moment, and renormalization.}
For a compactly supported test function $\varphi$, set
\[
 H_n(t,x):=\int_{\R^3}f_n(t,x,v)\nabla_v\varphi(t,x,v)\,\dd v.
\]
Then $H_n\rightharpoonup H:=\int f\nabla_v\varphi\,\dd v$ weakly in
$L^2_{t,x}$, while $E_n\to E$ strongly in $L^2_{t,x}$. Thus
$\int E_n\cdot H_n\to\int E\cdot H$, and all terms in the weak formulation
pass to the limit, including the initial term because $f_{0,n}\to f_0$ in
$L^1$.

For every bounded nonnegative velocity truncation $\beta_R\uparrow|v|^2$,
the functions $t\mapsto\iint\beta_Rf_n(t)$ are bounded in
$L^\infty(0,T)$ by $2\Kstar$. Passing to the weak-* limit in time and then
letting $R\to\infty$ gives the second-moment bound in
\eqref{eq:limit-M2}.

The uniform $L^\infty_tL^{5/3}_x$ bound for $\rho_n$ and the
uniform $L^\infty_t(L^2_x\cap L^{15/4}_x)$ bound for $E_n$ pass to the
limit by weak-* compactness; the strong convergences already proved identify
the weak-* limits with $\rho$ and $E$. Consequently,
\[
 \nabla_xE\in L^\infty(0,T;L^{5/3}(\Om)),
 \qquad
 E\in L^\infty(0,T;L^2(\Om)\cap L^{15/4}(\Om))
 \subset L^\infty(0,T;L^3(\Om)).
\]
Hence the phase-space vector field $b=(v,E)$ belongs to
$L^1(0,T;W^{1,1}_{\mathrm{loc}})$ and is divergence free. In the
whole-space case its global growth condition also holds. Indeed, the
velocity component of $b/(1+|x|+|v|)$ is bounded, while, with
$h=|E(x)|/(1+|x|+|v|)$,
\[
 h\one_{\{h\leq1\}}\in L^\infty,
 \qquad
 \iint h\one_{\{h>1\}}\,\dd x\dd v
 \leq C\int_{\R^3}|E(x)|^3\,\dd x<\infty.
\]
The DiPerna--Lions renormalization theorem therefore applies to
$b=(v,E)$. Since $b$ is divergence free, the renormalized equation preserves
the $L^p$ norms of $f$, and the associated continuity theorem gives
\eqref{eq:f-strong-time}. In particular, the mass identity obtained above
for almost every time holds for every $t\in[0,T]$, which completes
\eqref{eq:limit-M2}. A diagonal extraction over $T\in\mathbb N$ completes
the construction of the global solution.
\end{proof}

\begin{lemma}[Time-slice lower semicontinuity]\label{lem:time-slice}
After a further subsequence, for every $t\in[0,T]$,
\[
  M_k(t)\leq\liminf_{n\to\infty}M_k(f_n(t))
\]
whenever the right-hand side is uniformly finite on $[0,T]$.
\end{lemma}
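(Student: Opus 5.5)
The plan is to upgrade the space--time convergence of \cref{prop:compactness} to convergence at every fixed time slice, in a topology weak enough to be available but strong enough for lower semicontinuity of $M_k$. Concretely, we show that after a further subsequence $f_n(t)\rightharpoonup f(t)$ weakly in $L^1_{\mathrm{loc}}(\Om\times\R^3)$, or at least in $\mathcal D'(\Om\times\R^3)$, for \emph{every} $t\in[0,T]$. Weak lower semicontinuity of the nonnegative functional $g\mapsto\iint|v|^kg$, applied to bounded truncations, then gives the inequality.

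\textbf{Step 1: equicontinuity in time of the approximations.} Fix $\varphi\in C_c^\infty(\Om\times\R^3)$ and set $a_n^\varphi(t):=\iint f_n(t)\varphi$. The Vlasov equation gives
\[
 \frac{\dd}{\dd t}a_n^\varphi(t)=\iint f_n\bigl(v\cdot\nabla_x\varphi+E_n\cdot\nabla_v\varphi\bigr)\dd x\dd v .
\]
The first term is bounded by $C_\varphi M$. The second is bounded by $\|\nabla_v\varphi\|_\infty\|E_n\rho_n\|_{L^1_x}$, and this is uniformly bounded in $L^\infty(0,T)$ by the uniform $L^\infty_t(L^1\cap L^{5/3})$ bound on $\rho_n$ and the $L^\infty_tL^{15/4}$ bound on $E_n$, exactly as in \eqref{eq:rho-1511-general}. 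Hence the family $\{a_n^\varphi\}_n$ is uniformly Lipschitz on $[0,T]$ and uniformly bounded by $C_\varphi M$.

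\textbf{Step 2: diagonal extraction and identification of the limit.} Pick a countable set $\{\varphi_m\}$ dense in $C_c(\Om\times\R^3)$. By Arzel\`a--Ascoli and a diagonal argument, extract one subsequence along which every $a_n^{\varphi_m}$ converges uniformly on $[0,T]$. Since $\|f_n(t)\|_1\leq M$, density extends this uniform convergence to every $\varphi\in C_c$. The limit $a^\varphi(t)$ agrees with $\iint f(t)\varphi$ for a.e.\ $t$, by the weak $L^1$ convergence in \cref{prop:compactness}(i). Both sides are continuous in $t$: the right side because $f\in C([0,T];L^1)$ by \eqref{eq:f-strong-time}. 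They therefore coincide for every $t$. Thus $f_n(t)\to f(t)$ vaguely for every $t\in[0,T]$.

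\textbf{Step 3: lower semicontinuity.} For $R>0$ take $0\leq\psi_R\in C_c$ with $\psi_R\uparrow|v|^k$, for instance $\psi_R=\min(|v|^k,R)\chi(x/R)\chi(v/R)$. Step 2 gives, for each $t$,
\[
 \iint\psi_Rf(t)=\lim_n\iint\psi_Rf_n(t)\leq\liminf_nM_k(f_n(t)) .
\]
Letting $R\to\infty$ and using monotone convergence gives the claim. The hypothesis that the right-hand side is uniformly finite is not really needed; we only need it to make the statement meaningful. Nonnegativity of $f_n$ is what permits the monotone truncation.

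\textbf{Main obstacle.} The only delicate point is Step 1, namely the uniform time-Lipschitz bound of the force term. This uses the borderline integrability $\rho_nE_n\in L^\infty_tL^1_x$. The $L^{15/11}$--$L^{15/4}$ pairing handles it uniformly in $n$. A second subtlety is compatibility with the subsequence already chosen in \cref{prop:compactness}: the diagonal extraction is performed as a further subsequence, and if needed it is repeated over $T\in\mathbb N$.
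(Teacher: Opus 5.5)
Your proposal is correct and follows essentially the same route as the paper: you prove equicontinuity of the time pairings $t\mapsto\iint f_n(t)\varphi$, apply Arzel\`a--Ascoli with a diagonal extraction, identify the limit with $\iint f(t)\varphi$ for every $t$ via a.e.\ agreement and $f\in C([0,T];L^1)$, and finish with a monotone truncation $0\leq w_R\uparrow|v|^k$. The only cosmetic difference is the force-term bound: you use $\rho_nE_n\in L^\infty_tL^1_x$ (the $L^{15/11}$--$L^{15/4}$ pairing), whereas the paper uses $f_n\leq F$, the compact support of the test function, and $E_n\in L^\infty_tL^2_x$; both give the required uniform Lipschitz bound.
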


\begin{proof}
For every $\phi\in C_c^\infty(\Om\times\R^3)$, the classical
Vlasov equation gives
\[
 \frac{\dd}{\dd t}\iint f_n(t)\phi
 =\iint f_n(t)\bigl(v\cdot\nabla_x\phi
     +E_n(t,x)\cdot\nabla_v\phi\bigr).
\]
The right-hand side is uniformly bounded on $[0,T]$, because $\phi$ is
compactly supported, $f_n\leq F$, and $E_n$ is bounded in
$L^\infty(0,T;L^2)$. Thus the time pairings form an equicontinuous family.
Arzel\`a--Ascoli and a diagonal argument yield a continuous limit for
each compactly supported pairing. For almost every time, this limit agrees
with the spacetime weak limit obtained in \cref{prop:compactness}; since
$f\in C([0,T];L^1)$, the two representatives agree for every time. Hence
\[
 f_n(t)\rightharpoonup f(t)
\]
against compactly supported test functions for every $t\in[0,T]$.

By a standard smooth monotone approximation from below, choose
nonnegative functions
\[
 w_R\in C_c^\infty(\Om\times\R^3),
 \qquad
 0\leq w_R\uparrow |v|^k.
\]
No spatial cutoff is needed on the torus. Then, for every fixed $R$,
\[
 \iint w_R f(t)
 =
 \lim_{n\to\infty}\iint w_R f_n(t)
 \leq
 \liminf_{n\to\infty}M_k(f_n(t)).
\]
Letting \(R\to\infty\) and using monotone convergence gives
\[
 M_k(t)\leq\liminf_{n\to\infty}M_k(f_n(t)).
\]
\end{proof}

\section{The critical density estimate}\label{sec:serre}

\subsection{The two compensated-integrability theorems}

For an $N\times N$ symmetric matrix-valued finite Radon measure $A$, the
row-wise divergence is
\[
 (\Div A)_i=\sum_{j=1}^N\partial_jA_{ij}.
\]
We call $A$ a positive Div--BV tensor if $A$ is positive semidefinite as a
matrix-valued measure and $\Div A$ is a finite vector-valued measure.  In our
application $A$ has an $L^1$ density, so its determinant is the ordinary
pointwise determinant.

The Euclidean theorem is the following consequence of Serre's compensated
integrability theory \cite{Serre2018,Serre2019}.

\begin{theorem}[Euclidean case]\label{thm:serre-euclidean}
Let $A\in L^1(\R^N;\Sym_N^+)$ and
$\Div A\in\M(\R^N;\R^N)$.  Then
$(\det A)^{1/N}\in L^{N/(N-1)}(\R^N)$ and
\begin{equation*}
 \int_{\R^N}(\det A)^{1/(N-1)}\dd z
 \leq C_N\|\Div A\|_{\M(\R^N)}^{N/(N-1)}.
\end{equation*}
\end{theorem}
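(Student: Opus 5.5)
Since \cref{thm:serre-euclidean} is presented as a consequence of Serre's compensated integrability theory, the plan is to reduce it to Serre's original statement for smooth, compactly supported, uniformly positive definite tensors and then remove these restrictions by approximation.

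\emph{Step 1 (smooth case).} For $A\in C_c^\infty(\R^N;\Sym_N^+)$, Serre's inequality \cite{Serre2018} gives
\[
 \int_{\R^N}(\det A)^{1/(N-1)}\dd z\leq C_N\|\Div A\|_{L^1(\R^N)}^{N/(N-1)}.
\]
Its proof goes through the optimal transport (Brenier) map, or equivalently through the solution $u$ of a Monge--Ampère equation $\det D^2u=\mathbf 1_{B}$. One uses $A:D^2u\geq N(\det A)^{1/N}(\det D^2u)^{1/N}$ (AM--GM on $D^2u^{1/2}AD^2u^{1/2}$), integrates by parts to move a derivative onto $\Div A$, and bounds $|\nabla u|$ by the diameter of the target. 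An optimization over dilations then produces the exponent $N/(N-1)$. This step is simply cited.

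\emph{Step 2 (regularization).} For general $A\in L^1(\R^N;\Sym_N^+)$ with $\Div A$ a finite measure, set $A_\varepsilon=\eta_\varepsilon*A$ with a standard mollifier. Then $A_\varepsilon$ is smooth and positive semidefinite, $A_\varepsilon\to A$ in $L^1$ and almost everywhere along a subsequence, and $\Div A_\varepsilon=\eta_\varepsilon*\Div A$, so $\|\Div A_\varepsilon\|_{L^1}\leq\|\Div A\|_{\M}$.

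To obtain compact support, multiply by a cutoff $\theta_R(z)=\theta(z/R)$. This creates the extra divergence term $A_\varepsilon\nabla\theta_R$, whose $L^1$ norm is at most $C R^{-1}\|A\|_{L^1}$ and therefore vanishes as $R\to\infty$. Positivity is preserved, and $\det(\theta_RA_\varepsilon)^{1/(N-1)}=\theta_R^{N/(N-1)}(\det A_\varepsilon)^{1/(N-1)}$.

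\emph{Step 3 (passing to the limit).} Apply Step 1 to $\theta_RA_\varepsilon$. The determinant is continuous, so $(\det A_\varepsilon)^{1/(N-1)}\to(\det A)^{1/(N-1)}$ almost everywhere along the subsequence. Fatou's lemma, first as $\varepsilon\to0$ and then as $R\to\infty$, gives the inequality with constant $C_N$. The statement $(\det A)^{1/N}\in L^{N/(N-1)}$ is exactly the finiteness of the left-hand integral.

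\emph{Main obstacle.} The only delicate point is that Serre's smooth statement may demand strict positivity or compact support of $A$ itself. If uniform positivity is required, replace $A$ by $A+\delta\theta_R I$: its divergence gains a term of size $\delta R^{N-1}$, and letting $\delta\to0$ before $R\to\infty$ removes it. The determinant increases under this perturbation, and Fatou's lemma still applies.
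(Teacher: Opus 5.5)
Your proposal is correct and matches the paper, which gives no independent proof and simply quotes this result as a consequence of Serre's compensated-integrability theorem \cite{Serre2018,Serre2019}. Your mollification and cutoff reduction from the smooth, compactly supported case is sound: the extra divergence term has size $O(R^{-1}\|A\|_{L^1})$, the determinant transforms as $\det(\theta_R A_\varepsilon)=\theta_R^N\det A_\varepsilon$, and Fatou's lemma is applied in $\varepsilon$ and then in $R$.
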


For a mixed Euclidean--periodic domain, Serre proved the following theorem
\cite{Serre2024}.

\begin{theorem}[mixed periodic case]\label{thm:serre-periodic}
Let $N=k+m$, and let $A$ be a positive Div--BV tensor on
$\R^k\times\T^m$.  If
\[
 \Tr_mA=\sum_{j=k+1}^{N}A_{jj}
\]
denotes the trace in the periodic directions, then
\begin{equation}\label{eq:serre-periodic}
 \int_{\R^k\times\T^m}(\det A)^{1/(N-1)}\dd z
 \leq C_{k,m,\T^m}
 \bigl(\|\Div A\|_{\M}+\|\Tr_mA\|_{\M}\bigr)^{N/(N-1)}.
\end{equation}
\end{theorem}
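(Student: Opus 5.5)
The plan is to redo Serre's Monge--Amp\`ere/optimal-transport proof of \cref{thm:serre-euclidean} on $\R^k\times\T^m$, with $z=(x,y)$, $x\in\R^k$, $y\in\T^m$. The aim is to show that the only new boundary contribution is a term proportional to $\Tr_mA$.

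A cutoff argument on the periodic lift does \emph{not} work, and I would avoid it. On the lift to $\R^N$, a cutoff $\theta_L$ equal to one on $L^m$ periods creates the error $A\nabla\theta_L$, which involves the mixed entries $A_{ij}$. The right-hand side then scales like $L^{mN/(N-1)}$, while the left-hand side scales only like $L^m$. So the periodicity has to be used inside the duality argument itself.

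\emph{Step 1 (duality reduction).} By density and lower semicontinuity of $A\mapsto(\det A)^{1/N}$ on positive tensors, I may assume $A$ is smooth and compactly supported in $x$. Fix $0\le\phi$ with compact support in $x$. I would construct a convex function
\[
u(x,y)=\tfrac{\lambda}{2}|y|^2+w(x,y),
\]
with $w$ being $\Z^m$-periodic in $y$, satisfying $\det D^2u=\phi$ in the Alexandrov/Brenier sense. It should also satisfy the bounds $|\nabla_x u|\le C$ and $|\nabla_y w|\le C$, with $C$ and $\lambda$ depending only on $\|\phi\|_{L^1}$ and the geometry.

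\emph{Step 2 (pointwise inequality and integration by parts).} The AM--GM inequality for positive matrices gives
\[
N(\det A)^{1/N}(\det D^2u)^{1/N}\le\tr(A\,D^2u).
\]
The term $\lambda I_m$ in $D^2u$ contributes exactly $\lambda\Tr_mA$. For the periodic part, integration by parts over one cell gives
\[
\int\tr(A\,D^2w)=-\int\nabla w\cdot\Div A,
\]
with no boundary terms, since $w$ is periodic in $y$ and decays appropriately in $x$. Hence
\[
N\int\phi^{1/N}(\det A)^{1/N}\le C\bigl(\|\Div A\|_{\M}+\|\Tr_mA\|_{\M}\bigr).
\]

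\emph{Step 3 (scaling and choice of $\phi$).} I would choose $\phi=(\det A)^{1/(N-1)}$. Tracking the homogeneity of $C$ and $\lambda$ in $\|\phi\|_{L^1}$, as in Serre's proof, gives the factor $\|\phi\|_{L^1}^{1/N}$ on the right. Dividing through then yields
\[
\int(\det A)^{1/(N-1)}\le C\bigl(\|\Div A\|_{\M}+\|\Tr_mA\|_{\M}\bigr)^{N/(N-1)}.
\]
On a bounded torus the homogeneity is only partial. Where exact scaling fails, I would split into the regimes $\|\phi\|_{L^1}$ small and large and absorb the difference into the geometric constant $C_{k,m,\T^m}$.

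\emph{Main obstacle.} The hard part is Step 1: solving the mixed Monge--Amp\`ere problem on $\R^k\times\T^m$ with uniform gradient bounds. I would first obtain $\nabla u$ as the Brenier/McCann optimal map on the cylinder. It should push $\phi\,\dd z$ forward to a target measure of equal mass, namely a uniform measure on $B_r\times\T^m$ in the variables $(\nabla_xu,\ \nabla_yu-\lambda y\bmod\Z^m)$. Here the drift $\lambda$ plays the role of the average slope forced by periodicity, and $r$ is tuned so that the masses match. The required bounds $|\nabla_xu|\le r$ and $|\nabla_yw|\le\operatorname{diam}\T^m$ come from the target support. Making the McCann torus theory and the Euclidean factor compatible, and identifying the singular part of $D^2u$ correctly so that the inequality in Step 2 holds in the measure sense, is where I expect the real work to be.
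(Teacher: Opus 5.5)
The paper does not prove this statement. It imports it from Serre~\cite{Serre2024}, remarking only that the standard-torus version transfers to $\R^3/\mathbb{Z}^3$ by a linear change of variables. Your outline is the natural extension of Serre's duality proof of \cref{thm:serre-euclidean}, and the mechanism is right. The constant block $\lambda I_m$ of $D^2u$ produces exactly $\lambda\Tr_mA$, while the $y$-periodic part $w$ costs $\|\nabla w\|_\infty\|\Div A\|_{\M}$. Your explanation of why a cutoff on the periodic lift fails is correct. The cutoff in $x$ implicit in Step~1 is harmless, since it only adds $A\nabla_x\theta_L$, of mass $O(\|A\|_{L^1}/L)$.

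Step~1 is less delicate than you fear. McCann's theorem on the flat manifold $\R^k\times\T^m$ applies because the marginals are compactly supported after your reduction. It gives a lifted potential that is a supremum of affine functions with slopes in the lifted target. This yields the gradient bounds on all of $\R^k\times\T^m$, not only on $\{\phi>0\}$; you need that, since $\Div A$ may charge $\{\det A=0\}$. It also yields the Jacobian equation almost everywhere. The singular part of $D^2u$ is a nonnegative matrix measure and only helps in Step~2.

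There is, however, a real gap in the normalization: Steps~1 and~3 are inconsistent, and the regime-splitting remedy would not work.

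\emph{The target and the drift.} What must be pushed forward is $\nabla u$ itself, which descends to a map $\R^k\times\T^m\to\R^k\times(\R^m/\lambda\mathbb{Z}^m)$. The equation $\det D^2u=\phi$ means $(\nabla u)_\#(\phi\,\dd z)$ is Lebesgue measure on $B_r\times(\R^m/\lambda\mathbb{Z}^m)$. The pair $(\nabla_xu,\nabla_yu-\lambda y)$ you wrote is $\nabla w$, whose Jacobian is $\det D^2w$, not $\phi$. Hence the mass balance is $|B_r|\lambda^m=\|\phi\|_{L^1}$, and $\lambda$ is \emph{not} forced; it is a free parameter.

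\emph{The displacement bound.} Writing $u=\lambda\tilde u$ reduces the problem to transport onto $B_{r/\lambda}\times\T^m$. This gives $|\nabla_xu|\leq r$ and $|\nabla_yw|\leq\lambda\operatorname{diam}\T^m$. Your bound $\operatorname{diam}\T^m$ holds only when $\lambda\leq1$.

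\emph{The fix.} Take $r=\lambda=(\|\phi\|_{L^1}/|B_1|)^{1/N}$, where $B_1$ is the unit ball of $\R^k$. Then
\[
 N\int\phi^{1/N}(\det A)^{1/N}
 \leq\lambda\bigl(\|\Tr_mA\|_{\M}+(1+\operatorname{diam}\T^m)\|\Div A\|_{\M}\bigr).
\]
This is exactly homogeneous, and choosing $\phi=(\det A)^{1/(N-1)}$ gives the theorem directly, with no regime splitting.

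\emph{Why your remedy fails.} With $\lambda=1$ frozen, set $\Phi=\int(\det A)^{1/(N-1)}$. You only get
\[
 N\Phi\leq\|\Tr_mA\|_{\M}+(c\Phi^{1/k}+\operatorname{diam}\T^m)\|\Div A\|_{\M}.
\]
No splitting into regimes of $\Phi$ closes this. For small data it yields only $\Phi\lesssim X:=\|\Div A\|_{\M}+\|\Tr_mA\|_{\M}$, which cannot be absorbed into $CX^{N/(N-1)}$ as $X\to0$. For $k=1$, the case $\R_t\times\T^3_x$ used in the paper, one has $r=\Phi/2$, so nothing at all follows once $\|\Div A\|_{\M}\geq2N$. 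If you want to keep $\lambda=1$, the missing step is to normalize the test density: take $\phi=s(\det A)^{1/(N-1)}$ with $s\Phi=1$. Equivalently, use that both sides are homogeneous of degree $N/(N-1)$ under $A\mapsto tA$. This turns the estimate into $N\Phi^{(N-1)/N}\leq C(\|\Tr_mA\|_{\M}+\|\Div A\|_{\M})$.
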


Serre states the theorem first on the standard torus and then obtains an
arbitrary flat torus by a linear congruence transformation.  Thus
\eqref{eq:serre-periodic} applies to $\T^3=\R^3/\mathbb Z^3$, with a
geometry-dependent constant.

For the present four-dimensional space--time tensor, the two statements may
be written in the common form
\begin{equation}\label{eq:serre-unified}
 \int_{\R\times\Om}(\det A)^{1/3}\dd t\dd x
 \leq C_{\Om}
 \bigl(\|\Div A\|_{\M}
       +\delOm\|\Tr_xA\|_{\M}\bigr)^{4/3},
\end{equation}
where
\[
 \delOm=
 \begin{cases}0,&\Om=\R^3,\\1,&\Om=\T^3.\end{cases}
\]
For $\Om=\R^3$, \eqref{eq:serre-unified} is
\cref{thm:serre-euclidean}; for $\Om=\T^3$ it is
\cref{thm:serre-periodic} with $k=1$ and $m=3$.  It is important not to use
the fully periodic Div-free theorem here: time is noncompact, the kinetic
tensor has a nonzero divergence, and the periodic version necessarily
contains the additional spatial-trace term.

\subsection{The approximation tensors as Div--BV tensors}

For a smooth approximation define
\begin{equation*}
 A_n(t,x)=
 \begin{pmatrix}
  \rho_n & j_n^{\mathsf T}\\
  j_n & \Pi_n
 \end{pmatrix}
 =\int_{\R^3}\binom{1}{v}\otimes\binom{1}{v}f_n(t,x,v)\dd v.
\end{equation*}
For every $\zeta=(\zeta_0,\zeta')\in\R^4$,
\begin{equation*}
 \zeta^{\mathsf T}A_n\zeta
 =\int_{\R^3}(\zeta_0+\zeta'\cdot v)^2f_n\dd v\geq0,
\end{equation*}
so $A_n\in\Sym_4^+$.  By \eqref{eq:mass-momentum},
\begin{equation*}
 \Div_{t,x}A_n=\binom{0}{\rho_nE_n}.
\end{equation*}

We first verify that the source is controlled before using the desired
$L^2$ estimate for $\rho_n$.  Interpolation between $L^1$ and $L^{5/3}$ gives
\begin{equation*}
 \|\rho_n(t)\|_{15/11}
 \leq M^{1/3}\|\rho_n(t)\|_{5/3}^{2/3}.
\end{equation*}
By \cref{lem:poisson} and H\"older's inequality,
\begin{align}
 \|\rho_n(t)E_n(t)\|_1
 &\leq \|\rho_n(t)\|_{15/11}\|E_n(t)\|_{15/4}\notag\\
 &\leq C_{\Om}M^{1/3}\|\rho_n(t)\|_{5/3}^{5/3}
\notag\\
 &\leq C_{\Om,M,F,\Kstar}.
 \label{eq:rhoE-L1}
\end{align}
Here $15/11$ and $15/4$ are conjugate exponents.  This estimate uses only the
finite-energy layer and is therefore not circular.

Fix $T>0$ and extend the tensor by zero:
\[
 A_n^e(t,x)=\one_{(0,T)}(t)A_n(t,x).
\]
In distributions,
\begin{equation}\label{eq:zero-extension-div}
 \Div A_n^e
 =\one_{(0,T)}\binom{0}{\rho_nE_n}
 +\binom{\rho_n(0)}{j_n(0)}\delta_{t=0}
 -\binom{\rho_n(T)}{j_n(T)}\delta_{t=T}.
\end{equation}
The endpoint currents satisfy
\begin{equation}\label{eq:boundary-current}
 \|j_n(t)\|_1
 \leq\iint |v|f_n\dd x\dd v
 \leq M^{1/2}(2\Kstar)^{1/2}.
\end{equation}
Since a positive semidefinite matrix satisfies
$|A_{ij}|\leq(A_{ii}+A_{jj})/2$,
\begin{equation*}
 \int_0^T\int_{\Om}|A_n|\dd x\dd t
 \leq C\int_0^T\bigl(M+2\Kcal_n(t)\bigr)\dd t\leq C_T.
\end{equation*}
Thus $A_n^e$ is a finite positive tensor and, by
\eqref{eq:rhoE-L1}--\eqref{eq:boundary-current},
\begin{equation*}
 \|\Div A_n^e\|_{\M(\R\times\Om)}\leq C(1+T).
\end{equation*}
In the periodic case, the trace in the three periodic directions is
\begin{equation*}
 \Tr_xA_n^e
 =\one_{(0,T)}\tr\Pi_n
 =\one_{(0,T)}\int_{\R^3}|v|^2f_n\dd v,
\end{equation*}
and hence
\begin{equation*}
 \|\Tr_xA_n^e\|_{\M(\R\times\T^3)}
 =\int_0^TM_2(f_n(t))\dd t\leq2\Kstar T.
\end{equation*}
Applying \eqref{eq:serre-unified} yields
\begin{equation}\label{eq:det-serre-bound}
 \sup_n\int_0^T\int_{\Om}(\det A_n)^{1/3}\dd x\dd t
 \leq C_{\Om,M,F,\Kstar}(1+T)^{4/3}.
\end{equation}

\subsection{The covariance determinant}

We now prove the algebraic estimate that converts
\eqref{eq:det-serre-bound} into density integrability.

\begin{lemma}[Kinetic determinant lower bound]\label{lem:determinant}
Let $0\leq g\in L^1(\R^3)\cap L^\infty(\R^3)$ and
$|v|^2g\in L^1(\R^3)$.  Set
\[
 m=\int_{\R^3}g(v)\dd v,
 \qquad
 A_g=\int_{\R^3}\binom{1}{v}\otimes\binom{1}{v}g(v)\dd v.
\]
If $m=0$, then $g=0$ almost everywhere and $\det A_g=0$. If $m>0$, then
\begin{equation}\label{eq:determinant-lower}
 \det A_g\geq c\|g\|_\infty^{-2}m^6,
\end{equation}
where $c>0$ is universal.
\end{lemma}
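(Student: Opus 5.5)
Write $\mu=g\,\dd v/m$ for the probability measure, with mean $\bar v=m^{-1}\int vg\dd v$ and covariance
\[
 \Sigma=\int_{\R^3}(v-\bar v)\otimes(v-\bar v)\,\dd\mu(v).
\]
The plan has two steps: factor out the mean by a Schur complement, then bound $\det\Sigma$ from below using $\|g\|_\infty$.

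\emph{Step 1: Schur complement.} The finite second moment makes all entries of $A_g$ finite. If $m=0$, then $g\ge0$ forces $g=0$ a.e., so $A_g=0$. For $m>0$, the Schur complement of the entry $A_{00}=m$ is
\[
 \Pi-m^{-1}j\otimes j=m\Sigma .
\]
Hence
\[
 \det A_g=m\det(m\Sigma)=m^4\det\Sigma ,
\]
and it suffices to show
\[
 \det\Sigma\ge c\,\|g\|_\infty^{-2}m^2 .
\]

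\emph{Step 2: lower bound for $\det\Sigma$.} This is the main step.

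First, $\Sigma$ is positive definite. If $\zeta^{\mathsf T}\Sigma\zeta=0$ for some $\zeta\neq0$, then $\mu$ is concentrated on an affine plane. That is impossible because $\mu$ is absolutely continuous with respect to Lebesgue measure.

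Next, use the covariance ellipsoid
\[
 \mathcal E=\{v:(v-\bar v)^{\mathsf T}\Sigma^{-1}(v-\bar v)\le 6\}.
\]
Markov's inequality applied to
\[
 \int(v-\bar v)^{\mathsf T}\Sigma^{-1}(v-\bar v)\,\dd\mu=\tr I_3=3
\]
gives $\mu(\mathcal E^c)\le 3/6=1/2$, so $\mu(\mathcal E)\ge 1/2$.

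On the other hand, $g\le\|g\|_\infty$ gives
\[
 \mu(\mathcal E)\le \frac{\|g\|_\infty}{m}|\mathcal E|
 =\frac{\|g\|_\infty}{m}\,\frac{4\pi}{3}\,6^{3/2}(\det\Sigma)^{1/2}.
\]
Combining the two bounds,
\[
 (\det\Sigma)^{1/2}\ge c'\,\frac{m}{\|g\|_\infty},
 \qquad\text{so}\qquad
 \det\Sigma\ge c\,\frac{m^2}{\|g\|_\infty^2}.
\]
Substituting into Step 1 yields $\det A_g\ge c\,\|g\|_\infty^{-2}m^6$, which is \eqref{eq:determinant-lower}. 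The constant $c$ is explicit and universal.

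The points needing care are the identification of the Schur complement with $m\Sigma$ (a direct computation), the exact normalization of the ellipsoid volume, and the Markov step. A cross-check: the bound is sharp up to constants for $g=\|g\|_\infty\one_B$ with $B$ a ball. No earlier results of the paper are needed.
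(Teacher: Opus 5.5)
Your proposal is correct and follows essentially the same route as the paper. Both arguments use the Schur complement identity $\det A_g=m^4\det\Sigma$, positive definiteness of the covariance by absolute continuity, Markov's inequality on the covariance ellipsoid $\{(v-\bar v)^{\mathsf T}\Sigma^{-1}(v-\bar v)\le 6\}$ to capture mass at least $m/2$, and the volume bound $|B_{\sqrt6}|(\det\Sigma)^{1/2}$ combined with $g\le\|g\|_\infty$.
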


\begin{proof}
Assume $m>0$ and define the mean velocity
and covariance matrix
\begin{equation*}
 u=\frac1m\int_{\R^3}vg(v)\dd v,
 \qquad
 C=\frac1m\int_{\R^3}(v-u)\otimes(v-u)g(v)\dd v.
\end{equation*}
The matrix $C$ is positive definite.  Indeed, if
$\xi^{\mathsf T}C\xi=0$ for some $\xi\neq0$, then
\[
 \int_{\R^3}|\xi\cdot(v-u)|^2g(v)\dd v=0.
\]
Thus $g$ vanishes almost everywhere outside the affine hyperplane
$\{v:\xi\cdot(v-u)=0\}$.  This hyperplane has three-dimensional Lebesgue
measure zero, contradicting $m>0$ because $g$ is a Lebesgue density.

Consider the ellipsoid
\begin{equation*}
 \mathcal E=
 \bigl\{v\in\R^3:(v-u)^{\mathsf T}C^{-1}(v-u)\leq6\bigr\}.
\end{equation*}
Let $\mu$ be the probability measure $\dd\mu=g(v)\dd v/m$, and define the
nonnegative random variable
\[
 X(v)=(v-u)^{\mathsf T}C^{-1}(v-u).
\]
By the definition of $C$,
\begin{align*}
 \int_{\R^3}X(v)\dd\mu(v)
 &=\frac1m\int_{\R^3}
   \tr\bigl(C^{-1}(v-u)\otimes(v-u)\bigr)g(v)\dd v\notag\\
 &=\tr\left(C^{-1}\frac1m\int_{\R^3}
 (v-u)\otimes(v-u)g(v)\dd v\right)\notag\\
 &=\tr(C^{-1}C)=3.
\end{align*}
Markov's inequality for the nonnegative random variable $X$ gives
\begin{equation*}
 \mu(\mathcal E^c)=\mu(X>6)
 \leq\frac{\int X\dd\mu}{6}=\frac12.
\end{equation*}
Consequently,
\begin{equation*}
 \int_{\mathcal E}g(v)\dd v=m\mu(\mathcal E)\geq\frac m2.
\end{equation*}
On the other hand, $g\leq\|g\|_\infty$, and the change of variables
$v=u+C^{1/2}y$ yields
\begin{equation*}
 |\mathcal E|
 =\det(C^{1/2})\,|B_{\sqrt6}|
 =|B_{\sqrt6}|\sqrt{\det C}.
\end{equation*}
Therefore
\begin{equation*}
 \frac m2
 \leq\int_{\mathcal E}g(v)\dd v
 \leq\|g\|_\infty|B_{\sqrt6}|\sqrt{\det C}.
\end{equation*}
It follows that
\begin{equation}\label{eq:covariance-det-lower}
 \det C\geq c\frac{m^2}{\|g\|_\infty^2}.
\end{equation}

It remains to relate $C$ to $A_g$.  Since
\[
 \int v\otimes vg\dd v=m\,u\otimes u+mC,
\]
we have
\[
 A_g=
 \begin{pmatrix}
  m & mu^{\mathsf T}\\
  mu & m\,u\otimes u+mC
 \end{pmatrix}.
\]
The Schur complement of the upper-left entry $m$ is $mC$, and hence
\begin{equation}\label{eq:schur-determinant}
 \det A_g=m\det(mC)=m^4\det C.
\end{equation}
Combining \eqref{eq:covariance-det-lower} and
\eqref{eq:schur-determinant} proves
\eqref{eq:determinant-lower}.
\end{proof}

\subsection{The critical density estimate for the approximations}

If $F=0$, then $f_0=0$ and all conclusions are trivial. Assume $F>0$.
At points where $\rho_n(t,x)=0$, the desired inequality is immediate. At
points where $\rho_n(t,x)>0$, apply \cref{lem:determinant} to
$g(v)=f_n(t,x,v)$ and use $\|f_n\|_\infty\leq F$. Thus, almost everywhere,
\begin{equation*}
 \det A_n(t,x)\geq cF^{-2}\rho_n(t,x)^6,
 \qquad
 \rho_n(t,x)^2\leq CF^{2/3}(\det A_n(t,x))^{1/3}.
\end{equation*}
Together with \eqref{eq:det-serre-bound}, this proves the central estimate.

\begin{theorem}[Critical density estimate for the approximation scheme]\label{thm:critical-density-approx}
For every $T>0$,
\begin{equation}\label{eq:rho-L2-approx}
 \sup_n\int_0^T\int_{\Om}\rho_n(t,x)^2\dd x\dd t
 \leq C_{\Om,M,F,\Kstar}(1+T)^{4/3}.
\end{equation}
The limiting weak solution of \cref{prop:compactness} satisfies
\begin{equation*}
 \rho\in L^2((0,T)\times\Om)
\end{equation*}
and
\begin{equation}\label{eq:E-critical}
 E\in L^2(0,T;H^1(\Om))
 \hookrightarrow L^2(0,T;L^6(\Om)).
\end{equation}
\end{theorem}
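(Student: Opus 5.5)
The estimate \eqref{eq:rho-L2-approx} is essentially already assembled. I would combine the pointwise inequality obtained just before the statement,
\[
 \rho_n(t,x)^2\leq CF^{2/3}(\det A_n(t,x))^{1/3}
 \quad\text{a.e.},
\]
which follows from \cref{lem:determinant} applied to $g=f_n(t,x,\cdot)$ together with $\|f_n\|_\infty\leq F$, with the uniform Serre bound \eqref{eq:det-serre-bound}. Integrating over $(0,T)\times\Om$ then gives a constant $C_{\Om,M,F,\Kstar}(1+T)^{4/3}$, uniformly in $n$. I would also check that every ingredient of \eqref{eq:det-serre-bound} is uniform in $n$: the $L^1$ bound on $\rho_nE_n$ from \eqref{eq:rhoE-L1}, the endpoint currents \eqref{eq:boundary-current}, and the trace term $2\Kstar T$. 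All of these depend only on $M$, $F$, $\Kstar$ and the geometry, so the first claim follows.

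For the limit I would use weak lower semicontinuity. The sequence $\rho_n$ is bounded in $L^2((0,T)\times\Om)$, so a further subsequence converges weakly in $L^2$. Since $\rho_n\to\rho$ strongly in $L^1((0,T)\times\Om)$ by \eqref{eq:rho-strong1}, the weak limit is $\rho$. Hence
\[
 \|\rho\|_{L^2((0,T)\times\Om)}\leq\liminf_n\|\rho_n\|_{L^2((0,T)\times\Om)},
\]
and the same bound holds for $\rho$. Alternatively, Fatou's lemma along an a.e. convergent subsequence gives this directly.

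For the field estimate \eqref{eq:E-critical} I would apply \cref{lem:poisson} with $q=2$ at almost every time: $\|\nabla E(t)\|_2\leq C_\Om\|\rhos(t)\|_2$. On the torus, $\|\rho-\bar\rho\|_2\leq2\|\rho\|_2$ by the centered-density estimate. Squaring and integrating in time gives $\nabla E\in L^2_{t,x}$. Together with $E\in L^\infty(0,T;L^2(\Om))$ from \cref{prop:compactness}, this yields $E\in L^2(0,T;H^1(\Om))$. The embedding into $L^2(0,T;L^6)$ is Sobolev in three dimensions. On $\R^3$ the homogeneous inequality $\|E\|_6\leq C\|\nabla E\|_2$ suffices. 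On $\T^3$, $E$ is a gradient of a periodic function and hence has zero mean, so Poincar\'e--Sobolev applies.

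The main obstacle is identifying the pointwise determinant bound with the limit object. The only subtlety is that $\det$ is not weakly continuous, which is why I pass to the limit in $\rho_n$ rather than in $A_n$. The remaining measurability issue is minor: $E(t)$ must be the Poisson field of $\rho(t)$ for a.e. $t$, and \cref{prop:compactness} already provides this.
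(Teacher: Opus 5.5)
Your proposal is correct and follows the paper's proof essentially step by step. The pointwise bound $\rho_n^2\leq CF^{2/3}(\det A_n)^{1/3}$, combined with the uniform Serre estimate, gives the uniform $L^2$ bound; a weak $L^2$ limit identified with $\rho$ via the strong $L^1$ convergence handles the limit solution; and the Poisson estimate with $q=2$, together with the finite-energy bound $E\in L^\infty(0,T;L^2)$, yields $E\in L^2(0,T;H^1)$.
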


\begin{proof}
Only the limit passage remains.  By \eqref{eq:rho-L2-approx}, a subsequence
converges weakly in $L^2((0,T)\times\Om)$.  The strong $L^1$ convergence
\eqref{eq:rho-strong1} identifies its weak limit with $\rho$.  Finally,
\cref{lem:poisson} gives $\nabla E\in L^2_{t,x}$; $E\in L^\infty_tL^2_x$
comes from the finite-energy layer.  This proves \eqref{eq:E-critical}.
\end{proof}

\subsection{The universal critical density estimate}

The preceding argument is tailored to the classical approximations and gives
the uniform estimate required in \cref{sec:moments}. We now show that the
same critical density bound is intrinsic to every weak solution in the
bounded finite-energy class. The only additional issue is the absence of
strong endpoint traces for $(\rho,j)$, which is resolved by a smooth time
cutoff.

Let $(f,E)$ be a weak solution on $[0,T]$ with
$f\in\Xcal_T(\Om)$, and define
\begin{equation}\label{eq:universal-tensor}
 A(t,x)=
 \begin{pmatrix}
  \rho & j^{\mathsf T}\\
  j & \Pi
 \end{pmatrix}
 =\int_{\R^3}\binom{1}{v}\otimes\binom{1}{v}f(t,x,v)\dd v.
\end{equation}
Then $A\geq0$, $A\in L^1((0,T)\times\Om)$, and
\begin{equation}\label{eq:universal-tensor-div}
 \Div_{t,x}A=\binom{0}{\rho E}
\end{equation}
in distributions. Notice that the source is already integrable by
\eqref{eq:rhoE-general-L1}; no $L^2_{t,x}$ estimate for $\rho$ has been
used.

\begin{lemma}[Time-cutoff Div--BV extension]
\label{lem:time-cutoff-divbv}
There exist functions $\eta_\varepsilon\in C_c^\infty((0,T))$ satisfying
\[
 0\leq\eta_\varepsilon\leq1,
 \qquad
 \eta_\varepsilon(t)\longrightarrow1\quad(t\in(0,T)),
 \qquad
 \int_0^T|\eta_\varepsilon'(t)|\dd t\leq C,
\]
such that, after extension by zero outside $(0,T)$,
\[
 B_\varepsilon(t,x):=\eta_\varepsilon(t)A(t,x)
\]
is a positive Div--BV tensor on $\R\times\Om$ and
\begin{equation}\label{eq:time-cutoff-div}
 \Div B_\varepsilon
 =\eta_\varepsilon\binom{0}{\rho E}
 +\eta_\varepsilon'\binom{\rho}{j}.
\end{equation}
Moreover,
\begin{equation}\label{eq:time-cutoff-div-bound}
 \sup_{\varepsilon>0}
 \|\Div B_\varepsilon\|_{\M(\R\times\Om)}
 \leq C_{\Om,T,\|f\|_{\Xcal_T(\Om)}},
\end{equation}
and, in the periodic case,
\begin{equation}\label{eq:time-cutoff-trace-bound}
 \sup_{\varepsilon>0}
 \|\Tr_xB_\varepsilon\|_{\M(\R\times\T^3)}
 \leq T\|f\|_{\Xcal_T(\Om)}.
\end{equation}
\end{lemma}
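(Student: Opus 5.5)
I will take an explicit family of cutoffs: fix $0<\varepsilon<T/4$ and let $\eta_\varepsilon$ be a smooth function that vanishes near $t=0$ and $t=T$, rises monotonically from $0$ to $1$ on $[\varepsilon,2\varepsilon]$, equals $1$ on $[2\varepsilon,T-2\varepsilon]$, and decreases monotonically to $0$ on $[T-2\varepsilon,T-\varepsilon]$. This can be built by rescaling one fixed profile. Then $0\le\eta_\varepsilon\le1$, $\eta_\varepsilon\in C_c^\infty((0,T))$, and $\eta_\varepsilon(t)\to1$ for every fixed $t\in(0,T)$. Monotonicity on each transition layer gives $\int_0^T|\eta_\varepsilon'|\,\dd t=2$, so the constant $C=2$ is independent of $\varepsilon$. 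Positivity and integrability of $B_\varepsilon$ are immediate: $\eta_\varepsilon\ge0$, $A\ge0$ pointwise, and $|A_{ij}|\le(A_{ii}+A_{jj})/2$ together with the definition of $\Xcal_T(\Om)$ gives $\|B_\varepsilon\|_{L^1}\le C\,T\|f\|_{\Xcal_T(\Om)}$.

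For the divergence formula \eqref{eq:time-cutoff-div}, I test against $\psi\in C_c^\infty(\R\times\Om)$, with periodic $x$-dependence on the torus. Since $\eta_\varepsilon$ has compact support in $(0,T)$, the product $\eta_\varepsilon\psi$ is an admissible test function on $(0,T)\times\Om$ for the distributional balances \eqref{eq:weak-mass-momentum} of \cref{prop:macroscopic-balances}, and no initial or final trace enters. I expand $\partial_t(\eta_\varepsilon\psi)=\eta_\varepsilon\partial_t\psi+\eta_\varepsilon'\psi$ in the mass and momentum equations. Row $0$ of $A$ gives $\partial_t(\eta_\varepsilon\rho)+\diver_x(\eta_\varepsilon j)=\eta_\varepsilon'\rho$. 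Rows $1$–$3$ give $\partial_t(\eta_\varepsilon j)+\Div_x(\eta_\varepsilon\Pi)=\eta_\varepsilon\rho E+\eta_\varepsilon' j$. Since $\eta_\varepsilon\psi$ vanishes outside $(0,T)$, the zero extension introduces no boundary term, and this is exactly the point of replacing the sharp indicator used in \eqref{eq:zero-extension-div}.

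For the bound \eqref{eq:time-cutoff-div-bound}, I estimate the two terms separately. The first is bounded by $\|\rho E\|_{L^1((0,T)\times\Om)}$, which is finite by \eqref{eq:rhoE-general-L1}. Tracking that proof, the norm is controlled by $T\,C_\Om\,\|f\|_{\Xcal_T}^{1/3}\sup_t\|\rho\|_{5/3}^{5/3}$, and \eqref{eq:integrated-kinetic-interpolation} bounds $\|\rho\|_{5/3}$ in terms of $\|f\|_{\Xcal_T(\Om)}$. The second term is bounded by
\[
 \int_0^T|\eta_\varepsilon'(t)|\bigl(\|\rho(t)\|_1+\|j(t)\|_1\bigr)\dd t
 \le 2\operatorname*{ess\,sup}_{0<t<T}\bigl(\|\rho(t)\|_1+\|j(t)\|_1\bigr),
\]
and the Cauchy--Schwarz bound on $j$ from \cref{prop:macroscopic-balances} makes this at most $C\|f\|_{\Xcal_T(\Om)}$.

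The main subtlety is this last step, because $\eta_\varepsilon'$ concentrates like an approximate Dirac mass. The bound must therefore rely on an essential supremum in time of $\|\rho\|_1+\|j\|_1$, not merely on an $L^1_t$ bound. This is exactly what $L^\infty_t$ membership in $\Xcal_T$ supplies. The estimate holds for every $\varepsilon$ without pointwise-in-time traces, since $L^\infty$ in time pairs with $L^1$ in time.

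Finally, in the periodic case, $\Tr_xB_\varepsilon=\eta_\varepsilon\int|v|^2f\,\dd v\ge0$. Its total mass is therefore at most $\int_0^T M_2(t)\,\dd t\le T\|f\|_{\Xcal_T(\Om)}$, which is \eqref{eq:time-cutoff-trace-bound}.
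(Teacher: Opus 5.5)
Your proposal is correct and follows essentially the same route as the paper. The paper likewise takes a rescaled monotone profile, $\eta_\varepsilon(t)=\theta(t/\varepsilon)\theta((T-t)/\varepsilon)$, with total variation at most $2\|\theta'\|_{L^1}$, and obtains the divergence formula from the product rule applied to the macroscopic balances. It then bounds the $\eta_\varepsilon\rho E$ term in $L^1$ and the $\eta_\varepsilon'(\rho,j)$ term via the essential supremum in time of $\|\rho\|_1+\|j\|_1$, and controls the periodic trace by $T\operatorname*{ess\,sup}_t M_2(t)$.
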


\begin{proof}
Choose a nondecreasing function $\theta\in C^\infty(\R)$ such that
$0\leq\theta\leq1$, $\theta=0$ on $(-\infty,1/2]$, and $\theta=1$ on
$[1,\infty)$. For $0<\varepsilon<T/4$, set
\[
 \eta_\varepsilon(t)=\theta(t/\varepsilon)
 \theta((T-t)/\varepsilon).
\]
Then
\[
 \int_0^T|\eta_\varepsilon'|\dd t
 \leq2\|\theta'\|_{L^1(\R)}.
\]
Since $\eta_\varepsilon A$ vanishes in neighborhoods of both endpoints, its
zero extension produces no boundary Dirac masses. The product rule and
\eqref{eq:universal-tensor-div} give \eqref{eq:time-cutoff-div}; the
differentiated time column of $A$ is $(\rho,j)^{\mathsf T}$.

The first term in \eqref{eq:time-cutoff-div} is uniformly bounded in $L^1$
by \eqref{eq:rhoE-general-L1}. For the second term, mass conservation and
Cauchy--Schwarz give
\[
 \operatorname*{ess\,sup}_{0<t<T}\|\rho(t)\|_1=M,
 \qquad
 \operatorname*{ess\,sup}_{0<t<T}\|j(t)\|_1
 \leq M^{1/2}
 \left(\operatorname*{ess\,sup}_{0<t<T}M_2(t)\right)^{1/2}.
\]
These bounds are controlled by $\|f\|_{\Xcal_T(\Om)}$. Together with the
uniform $L^1$ bound on $\eta_\varepsilon'$, this proves
\eqref{eq:time-cutoff-div-bound}. If $\Om=\T^3$, then
\[
 \Tr_xB_\varepsilon
 =\eta_\varepsilon\tr\Pi
 =\eta_\varepsilon\int_{\R^3}|v|^2f\dd v,
\]
and hence
\[
 \|\Tr_xB_\varepsilon\|_{\M(\R\times\T^3)}
 \leq T\operatorname*{ess\,sup}_{0<t<T}M_2(t)
 \leq T\|f\|_{\Xcal_T(\Om)},
\]
which gives \eqref{eq:time-cutoff-trace-bound}.
\end{proof}

\begin{theorem}[Universal critical density estimate]
\label{thm:critical-density-universal}
Every weak solution $(f,E)$ on $[0,T]$ with $f\in\Xcal_T(\Om)$ obeys
\begin{equation}\label{eq:rho-L2-universal}
 \int_0^T\int_{\Om}\rho(t,x)^2\dd x\dd t
 \leq C_{\Om,T,\|f\|_{\Xcal_T(\Om)}}.
\end{equation}
More precisely, if $\|f\|_{\Xcal_T(\Om)}$ is bounded by a fixed constant,
the preceding argument gives a bound of order $C_{\Om}(1+T)^{4/3}$.
Consequently,
\begin{equation}\label{eq:E-critical-universal}
 E\in L^2(0,T;H^1(\Om))
 \hookrightarrow L^2(0,T;L^6(\Om)).
\end{equation}
\end{theorem}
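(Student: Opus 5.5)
The plan is to apply the unified compensated-integrability inequality \eqref{eq:serre-unified} to the cut-off tensors $B_\varepsilon$ of \cref{lem:time-cutoff-divbv}, bound the determinant from below by \cref{lem:determinant}, and then let $\varepsilon\to0$ by monotone convergence. First we check that $B_\varepsilon$ satisfies the hypotheses. It is positive semidefinite pointwise, since $\eta_\varepsilon\ge0$ and $\zeta^{\mathsf T}A\zeta=\int(\zeta_0+\zeta'\cdot v)^2f\dd v\geq0$. It lies in $L^1(\R\times\Om)$ because $|A_{ij}|\leq(A_{ii}+A_{jj})/2$ and $\rho+\tr\Pi\in L^1((0,T)\times\Om)$ by the $\Xcal_T$ bound. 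Its divergence is a finite measure, controlled by \eqref{eq:time-cutoff-div-bound}, and in the periodic case the spatial trace is controlled by \eqref{eq:time-cutoff-trace-bound}. With $N=4$, \cref{thm:serre-euclidean} (for $\R^3$) or \cref{thm:serre-periodic} with $k=1$, $m=3$ (for $\T^3$) then gives
\[
 \int_{\R\times\Om}(\det B_\varepsilon)^{1/3}\dd t\dd x
 \leq C_\Om\bigl(C_{\Om,T,\|f\|_{\Xcal_T}}+\delOm T\|f\|_{\Xcal_T}\bigr)^{4/3},
\]
uniformly in $\varepsilon$.

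Next comes the pointwise lower bound. Since $B_\varepsilon$ is $4\times4$, we have $\det B_\varepsilon=\eta_\varepsilon^4\det A$, so $(\det B_\varepsilon)^{1/3}=\eta_\varepsilon^{4/3}(\det A)^{1/3}$. For almost every $(t,x)$, the slice $g=f(t,x,\cdot)$ satisfies $0\leq g\leq\|f\|_{L^\infty}$, $g\in L^1$ and $|v|^2g\in L^1$, by Fubini and the $\Xcal_T$ bound. Where $\rho(t,x)>0$, \cref{lem:determinant} gives $\det A\geq c\|f\|_\infty^{-2}\rho^6$. Hence
\[
 \rho^2\leq C\|f\|_\infty^{2/3}(\det A)^{1/3},
\]
and this holds trivially where $\rho=0$. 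Also, if $\|f\|_\infty=0$ then everything vanishes.

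The limit passage is then routine. We have $\int\eta_\varepsilon^{4/3}\rho^2\leq C\|f\|_\infty^{2/3}\int(\det B_\varepsilon)^{1/3}$, bounded independently of $\varepsilon$. The cutoffs $\eta_\varepsilon=\theta(t/\varepsilon)\theta((T-t)/\varepsilon)$ increase as $\varepsilon\downarrow0$ (taking $\theta$ nondecreasing) and converge to $1$ on $(0,T)$, so monotone convergence (or Fatou) yields \eqref{eq:rho-L2-universal}. For the refined order, when $\|f\|_{\Xcal_T}\le K$ the divergence bound is $\lesssim\|\rho E\|_{L^1}+\|\eta'_\varepsilon\|_{L^1}(M+\|j\|_{L^\infty_tL^1})\lesssim C_K(1+T)$ by \eqref{eq:rhoE-general-L1}, and the trace bound is $\le KT$. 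Raising to the power $4/3$ gives $C(1+T)^{4/3}$. Finally, \cref{lem:poisson} with $q=2$ together with the periodic centering estimate gives $\|\nabla E\|_{L^2_{t,x}}\leq C\|\rho\|_{L^2_{t,x}}$ (plus $T^{1/2}\bar\rho$ on the torus). Combined with $E\in L^\infty_tL^2_x$ from $\|E\|_2\leq C\|\rho^\circ\|_{6/5}$ and the $L^1\cap L^{5/3}$ bounds of \cref{prop:macroscopic-balances}, this gives $E\in L^2(0,T;H^1)$. Sobolev embedding then gives $L^2_tL^6_x$.

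The main obstacle is verifying that the hypotheses of Serre's theorems hold exactly for $B_\varepsilon$. The key issue is that the distributional divergence identity \eqref{eq:universal-tensor-div}, derived from the weak formulation via \cref{prop:macroscopic-balances}, holds on all of $\R\times\Om$ after multiplication by $\eta_\varepsilon$ with compact time support. This is what lets us avoid endpoint traces entirely. A secondary point is measurability of $\det A$ and the a.e. applicability of the determinant lemma, both of which follow from Fubini.
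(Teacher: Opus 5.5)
Your proposal is correct and follows the paper's proof essentially step for step: apply \eqref{eq:serre-unified} to $B_\varepsilon$, use $\det B_\varepsilon=\eta_\varepsilon^4\det A$ together with \cref{lem:determinant} pointwise, remove the cutoff, and conclude the field regularity with \cref{lem:poisson}. The differences are cosmetic. You remove the cutoff by monotone convergence, using that $\eta_\varepsilon$ increases as $\varepsilon\downarrow0$, whereas the paper uses Fatou. You also write out the hypothesis checks and the $(1+T)^{4/3}$ scaling that the paper leaves implicit.
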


\begin{proof}
If $\|f\|_{L^\infty((0,T)\times\Om\times\R^3)}=0$, then $f=0$
almost everywhere and the conclusion is trivial. Otherwise, apply
\eqref{eq:serre-unified} to $B_\varepsilon$. Since
$\det B_\varepsilon=\eta_\varepsilon^4\det A$,
\cref{lem:time-cutoff-divbv} yields
\[
 \int_0^T\int_{\Om}
 \eta_\varepsilon^{4/3}(\det A)^{1/3}\dd x\dd t
 \leq C_{\Om,T,\|f\|_{\Xcal_T(\Om)}}.
\]
Fatou's lemma and $\eta_\varepsilon\to1$ give the same estimate without the
cutoff. By \cref{lem:determinant}, applied pointwise in $(t,x)$,
\[
 \rho^2
 \leq C\|f\|_{L^\infty((0,T)\times\Om\times\R^3)}^{2/3}
 (\det A)^{1/3}
\]
almost everywhere. This proves \eqref{eq:rho-L2-universal}. Finally,
\cref{lem:poisson} gives $\nabla E\in L^2((0,T)\times\Om)$, while the
finite-energy estimates give $E\in L^\infty(0,T;L^2(\Om))$. This proves
\eqref{eq:E-critical-universal}.
\end{proof}

\begin{remark}[No approximation or endpoint trace is used]
\label{rem:no-approximation}
The proof of \cref{thm:critical-density-universal} uses only the given weak
solution. The cutoff term in \eqref{eq:time-cutoff-div} is controlled by mass
and the first moment, and the interior source is controlled before the
desired $L^2_{t,x}$ estimate. In particular, no convergence from classical
solutions and no strong trace of $(\rho,j)$ at $t=0,T$ enters the argument.
\end{remark}

\begin{remark}[Why the Serre applications are legitimate]
\label{rem:serre-check}
At the approximation level, the argument rests on four separate facts:
\begin{enumerate}[label=\textup{(\arabic*)},leftmargin=2.2em]
\item $A_n^e$ is a finite positive semidefinite tensor;
\item its row-divergence is the finite measure in
\eqref{eq:zero-extension-div};
\item the interior source $\rho_nE_n$ is in $L^1$ without using the desired
$L^2$ density estimate;
\item in the mixed periodic theorem, the additional trace is precisely the
local kinetic-energy density and has finite mass by the second-moment bound.
\end{enumerate}
For an arbitrary weak solution, the time-cutoff tensor $B_\varepsilon$
vanishes near both temporal endpoints, so its zero extension creates no
boundary Dirac masses; its divergence and periodic trace are controlled in
\eqref{eq:time-cutoff-div-bound}--\eqref{eq:time-cutoff-trace-bound}. No
divergence-free hypothesis is imposed in either argument, and the periodic
theorem is used on $\R_t\times\T_x^3$, not on a four-dimensional torus.
\end{remark}

\section{Moment propagation}\label{sec:moments}

\subsection{A common low-order estimate}

For moment propagation we use the uniform approximation-level form of the
critical density estimate in \cref{thm:critical-density-approx}.

We first treat $2<k\leq3$.  This is the new direct part of the moment
argument and works in both geometries and for either sign of the interaction.
For a classical approximation,
\begin{equation}\label{eq:Mk-derivative}
 \left|\frac{\dd}{\dd t}M_k(f_n(t))\right|
 \leq k\int_{\Om}|E_n(t,x)|m_{k-1,n}(t,x)\dd x.
\end{equation}
Taking $\ell=k-1$ in
\eqref{eq:integrated-kinetic-interpolation}, with
$p_k=(k+3)/(k+2)$, gives
\begin{equation}\label{eq:mk-pk}
 \|m_{k-1,n}\|_{p_k}
 \leq C_kF^{1/(k+3)}M_k(f_n)^{(k+2)/(k+3)}.
\end{equation}
The global estimate \eqref{eq:global-moment-interpolation} gives
\begin{equation}\label{eq:Mkminus1-global}
 \|m_{k-1,n}\|_1=M_{k-1}(f_n)
 \leq M^{1/k}M_k(f_n)^{(k-1)/k}.
\end{equation}
For $2<k\leq3$, define
\begin{equation*}
 \theta_k=\frac{3-k}{6}\in[0,1/6).
\end{equation*}
A direct computation gives
\begin{equation*}
 \frac{5}{6}=\theta_k+\frac{1-\theta_k}{p_k}.
\end{equation*}
Interpolating \eqref{eq:mk-pk} and
\eqref{eq:Mkminus1-global}, we obtain the geometry-independent estimate
\begin{equation}\label{eq:mk-65}
 \|m_{k-1,n}\|_{6/5}
 \leq C_kF^{1/6}M^{(3-k)/(6k)}
 M_k(f_n)^{1-1/(2k)}.
\end{equation}
Notice that this argument does not use the finite measure of $\T^3$ and
therefore applies without change in $\R^3$.

Using H\"older's inequality in \eqref{eq:Mk-derivative} and
\eqref{eq:mk-65},
\begin{equation*}
 \frac{\dd}{\dd t}M_k(f_n(t))
 \leq C_kF^{1/6}M^{(3-k)/(6k)}
 M_k(f_n(t))^{1-1/(2k)}\|E_n(t)\|_6.
\end{equation*}
Set
\[
 Z_{k,n}(t)=(1+M_k(f_n(t)))^{1/(2k)}.
\]
Then
\begin{equation}\label{eq:Zk-ineq}
 Z_{k,n}'(t)
 \leq C_kF^{1/6}M^{(3-k)/(6k)}\|E_n(t)\|_6.
\end{equation}
By \cref{thm:critical-density-approx},
$E_n$ is uniformly bounded in $L^2(0,T;L^6)$, and hence in
$L^1(0,T;L^6)$.  Integrating \eqref{eq:Zk-ineq} gives
\begin{equation}\label{eq:low-moment-uniform}
 \sup_n\sup_{0\leq t\leq T}M_k(f_n(t))<\infty,
 \qquad2<k\leq3.
\end{equation}

\begin{remark}[The endpoint $k=3$]\label{rem:k3}
At $k=3$, $p_k=6/5$ and $\theta_k=0$, so
\eqref{eq:mk-65} is exactly the standard $L^6$--$L^{6/5}$ pairing.  There is
no endpoint loss.  For $k>3$, one has $p_k<6/5$, and the present interpolation
cannot place $m_{k-1}$ in $L^{6/5}$; this is the genuine boundary of this
short argument.
\end{remark}

\subsection{Passage to weak solutions and completion of the range}

By \cref{lem:time-slice} and \eqref{eq:low-moment-uniform},
\begin{equation}\label{eq:low-moment-limit}
 \sup_{0\leq t\leq T}M_k(t)<\infty,
 \qquad2<k\leq3.
\end{equation}
This proves the new low-order assertion in both $\R^3$ and $\T^3$.

For $k>3$, we use the known propagation estimates. In the whole space,
Pallard proved propagation for every $k>2$ \cite{Pallard2012}; in the periodic problem,
Chen--Chen proved it for $k>3$
\cite{ChenChen2019}. Applying the a priori estimate
\cite[Theorem~3.4]{ChenChen2019} to the smooth periodic solution $f_n$ gives
\[
 \sup_{0\leq t\leq T}M_k(f_n(t))
 \leq
 C\left(T,k,M,F,\Kstar,\sup_n M_k(f_{0,n})\right),
\]
where $C$ is independent of $n$. Since
$M_k(f_{0,n})\to M_k(f_0)$, the right-hand side is finite.
A second application of \cref{lem:time-slice} passes the estimate to the
limit.  We have proved the moment part of \cref{thm:moment-main}.

\begin{corollary}[Low-order moment propagation for both signs]
\label{cor:low-both-signs}
Let $\Om\in\{\R^3,\T^3\}$, $\gamma\in\{-1,1\}$, and $2<k\leq3$.
Assume
$0\leq f_0\in L^1(\Om\times\R^3)\cap L^\infty(\Om\times\R^3)$ and
$M_k(0)<\infty$. Then the finite-energy approximation constructed in
\cref{prop:compactness} has a subsequence whose limit satisfies
\eqref{eq:low-moment-limit} in either geometry.
\end{corollary}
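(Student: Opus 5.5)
The argument reruns the approximation scheme of \cref{prop:compactness} and checks that every step used for the low-order estimate is independent of the sign $\gamma$. Fix $\gamma\in\{-1,1\}$, $\Om\in\{\R^3,\T^3\}$ and $2<k\leq3$. Since $M_k(0)<\infty$ with $k>2$, interpolation (\eqref{eq:global-moment-interpolation}) gives $M_2(0)<\infty$. \Cref{lem:poisson} and the bound $\rho_0\in L^1\cap L^{5/3}$ from \eqref{eq:integrated-kinetic-interpolation} then give $E(0)\in L^2$. Hence \cref{lem:uniform-K} applies for either sign and yields a uniform bound $\Kstar$ on the kinetic energies of the smooth approximations $f_n$. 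We choose the approximating data so that $M_k(f_{0,n})\to M_k(f_0)$, which the approximation framework allows.

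The next step is to note that \cref{prop:compactness} and \cref{lem:time-slice} use only the bounds $M$, $F$, $\Kstar$ together with global classical existence. The continuation theory is sign-insensitive, as remarked in the construction. So after extraction there is a limit weak solution $f$, and the time-slice lower semicontinuity holds along the chosen subsequence. The approximation-level critical density estimate \cref{thm:critical-density-approx} is also sign-independent. Its inputs are positivity of $A_n$, which comes from $f_n\geq0$; the $L^1$ bound \eqref{eq:rhoE-L1} on $\rho_nE_n$; the endpoint currents \eqref{eq:boundary-current}; and, on the torus, the trace bound by $2\Kstar T$. None of these involves $\gamma$. Combining this with \cref{lem:poisson} gives $\sup_n\|E_n\|_{L^2(0,T;L^6)}<\infty$.

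We then repeat the differential inequality \eqref{eq:Mk-derivative}--\eqref{eq:Zk-ineq}. The interpolation \eqref{eq:mk-65} uses only $M$, $F$ and $k\leq3$, with no dependence on the geometry or on the sign. Integrating \eqref{eq:Zk-ineq} over $[0,T]$ and using $\sup_n M_k(f_{0,n})<\infty$ yields \eqref{eq:low-moment-uniform}. Finally, \cref{lem:time-slice} transfers the uniform bound to every time slice of the limit, which gives \eqref{eq:low-moment-limit}. A diagonal extraction over $T\in\mathbb N$ makes the statement hold for every $T$.

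The main obstacle is bookkeeping rather than new estimates. One must verify that the constant $\Kstar$ in the attractive case, obtained from the quadratic inequality in \cref{lem:uniform-K}, is uniform in $n$; this needs $E_n(0)\to E(0)$ in $L^2$, which that lemma supplies. One must also make sure the same subsequence serves \cref{lem:time-slice}; since further extractions preserve the limit, this is harmless.
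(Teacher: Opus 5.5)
Your proposal is correct and follows the paper's argument. The paper's proof observes that $\gamma$ enters the chain \eqref{eq:Mk-derivative}--\eqref{eq:Zk-ineq} only through the uniform kinetic-energy bound, which \cref{lem:uniform-K} supplies for both signs. Your extra checks (sign-independence of the Serre-based density estimate, of the compactness and time-slice steps, and uniformity of $\Kstar$ and $M_k(f_{0,n})$) spell out that same observation.
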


\begin{proof}
The sign does not enter \eqref{eq:Mk-derivative}--\eqref{eq:Zk-ineq}; it
enters only the uniform second-moment estimate, which is available for both
signs by \cref{lem:uniform-K}.
\end{proof}

\section{Energy conservation}
\label{sec:energy}

The energy argument is intrinsic. We first isolate the basic mechanism under
strong time continuity, retaining the criterion useful for the constructed
solutions. We then remove the strong time-continuity assumption and obtain
the universal identity stated in \cref{thm:energy-universal}.

\subsection{Energy equality under strong time continuity}

\begin{proposition}[Critical energy-equality criterion]
\label{prop:energy-criterion}
Let $\Om\in\{\R^3,\T^3\}$ and $\gamma\in\{-1,1\}$. Let $(f,E)$ be a
nonnegative weak solution on $[0,T]$ satisfying
\[
 f\in C([0,T];L^1(\Om\times\R^3))\cap\Xcal_T(\Om),
 \qquad M_2(0)<\infty,
\]
and
\begin{equation}\label{eq:energy-criterion-rho}
 \rho\in L^2((0,T)\times\Om).
\end{equation}
Then $\Kcal$ and $\Pcal$ in \eqref{eq:energy-def} have absolutely continuous
representatives and, for every $0\leq s\leq t\leq T$,
\begin{align}
 \Kcal(t)-\Kcal(s)&=\int_s^t\int_{\Om}E(\tau,x)\cdot j(\tau,x)\dd x\dd\tau,
 \label{eq:kinetic-balance}\\
 \Pcal(t)-\Pcal(s)&=-\int_s^t\int_{\Om}E(\tau,x)\cdot j(\tau,x)\dd x\dd\tau.
 \label{eq:potential-balance}
\end{align}
Consequently, $\Ecal(t)=\Ecal(s)$.
\end{proposition}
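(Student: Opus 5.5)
The proof has three steps: establish that the work term $E\cdot j$ is integrable, derive the kinetic balance from the weak formulation by velocity truncation, and derive the potential balance from \eqref{eq:E-time-weak}.

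\emph{Step 1: integrability of $E\cdot j$.} The hypothesis \eqref{eq:energy-criterion-rho} and \cref{lem:poisson} give $E\in L^2(0,T;L^6(\Om))$, as in \cref{thm:critical-density-universal}. Taking $(k,\ell)=(2,1)$ in \eqref{eq:integrated-kinetic-interpolation} gives $j\in L^\infty(0,T;L^{5/4})$. Interpolating with $j\in L^\infty_tL^1_x$ gives $j\in L^\infty(0,T;L^{6/5})$, since $6/5\in[1,5/4]$. Hence $E\cdot j\in L^2(0,T;L^1)\subset L^1((0,T)\times\Om)$.

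\emph{Step 2: the kinetic balance \eqref{eq:kinetic-balance}.} Test the weak formulation \eqref{eq:weak-form} with $\varphi(t,x,v)=\eta(t)\psi_R(x)\beta_R(v)$. Here $\beta_R(v)=\tfrac12|v|^2\chi_R(v)$ is a smooth velocity truncation (monotone, or at least with $|\nabla\beta_R|\leq C|v|$ and support transition in $R<|v|<2R$). The factor $\psi_R$ is a spatial cutoff in $\R^3$ and is absent on $\T^3$; $\eta$ is a time test function. This yields, in $\mathcal D'(0,T)$,
\[
 \frac{\dd}{\dd t}\iint\beta_R f\,\psi_R
 =\iint f\,\beta_R\, v\cdot\nabla\psi_R
 +\iint f\,\psi_R\, E\cdot\nabla_v\beta_R .
\]
The spatial-cutoff term is bounded by $R^{-1}\cdot R^2\cdot M_1$, or handled by taking $\psi_{R'}$ with $R'\gg R^3$ first; since $\beta_R$ is bounded by $CR^2$, it vanishes as the spatial cutoff is removed.

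For the force term, $\nabla_v\beta_R=v\chi_R+\tfrac12|v|^2R^{-1}\nabla\chi(v/R)$. The second piece is bounded by $C|v|\one_{R<|v|<2R}$. So the force term equals $\int E\cdot j_R$ plus an error bounded by $C\int|E|\,m_{1,>R}$, where $j_R$ is the truncated current and $m_{1,>R}$ the first moment restricted to $|v|>R$.

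This error is the main obstacle: we must show $\int_0^T\!\int|E|\,m_{1,>R}\to0$ using only the second moment. I would use the same interpolation as in Step 1 applied to $f\one_{|v|>R}$. This gives a uniform $L^\infty_tL^{5/4}$ bound, and an $L^\infty_tL^1$ bound of order $R^{-1}M_2$. Interpolation then yields $\|m_{1,>R}\|_{L^\infty_tL^{6/5}}\leq CR^{-\alpha}$ for some $\alpha>0$, and pairing with $E\in L^2_tL^6_x$ sends the error to zero. Similarly $\int E\cdot j_R\to\int E\cdot j$ by dominated convergence, using Step 1.

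Hence $\tfrac{\dd}{\dd t}\iint\beta_Rf$ converges in $L^1(0,T)$ to $\int E\cdot j$. Since $f\in C([0,T];L^1)$, each $t\mapsto\iint\beta_Rf(t)$ is continuous (because $\beta_R$ is bounded), and for every $s,t$
\[
 \iint\beta_Rf(t)-\iint\beta_Rf(s)=\int_s^t\!\int E\cdot j_R+o(1).
\]
The left side converges to $\Kcal(t)-\Kcal(s)$ by monotone convergence, which is finite because $M_2\in L^\infty_t$. The right side's limit is uniform in $t$, so $\Kcal$ itself equals an absolutely continuous function everywhere. We use $M_2(0)<\infty$ at $s=0$.

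\emph{Step 3: the potential balance \eqref{eq:potential-balance}.} Use \eqref{eq:E-time-weak}: $E\in L^2(0,T;H^1)$ by Step 1, and $\partial_tE=-\lamOm\gamma Q_\Om j\in L^\infty(0,T;L^{6/5})$. Since $Q_\Om$ is bounded on $L^{6/5}$ and $H^1\hookrightarrow L^6$, we have $\partial_tE$ in the dual pairing of $H^1$ with $(H^1)^*\supset L^{6/5}$. The Lions--Magenes lemma in the triple $H^1\subset L^2\subset (H^1)^*$ then gives $E\in C([0,T];L^2)$ and
\[
 \tfrac{\dd}{\dd t}\tfrac12\|E\|_2^2=\langle\partial_tE,E\rangle=-\lamOm\gamma\int Q_\Om j\cdot E .
\]
In $\R^3$ one should justify this with homogeneous spaces; alternatively, mollify in $x$ and pass to the limit. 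Since $E$ is a gradient field and $Q_\Om$ is self-adjoint, $\int Q_\Om j\cdot E=\int j\cdot E$. Multiplying by $\gamma/\lamOm$ gives \eqref{eq:potential-balance}. Adding this to \eqref{eq:kinetic-balance} gives $\Ecal(t)=\Ecal(s)$.
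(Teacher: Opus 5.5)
Your proposal is correct and follows essentially the same route as the paper. Both establish $E\cdot j\in L^1$ from $E\in L^2(0,T;L^6)$ and $j\in L^\infty(0,T;L^{6/5})$, use a monotone velocity truncation $\beta_R$ with $R^{-1/6}$ tail decay in $L^{6/5}$ from $L^1$--$L^{5/4}$ interpolation, anchor the kinetic balance at $s=0$ via continuity of $\Kcal_R$ from $f\in C([0,T];L^1)$, and handle the potential energy by Lions--Magenes with $Q_{\Om}E=E$.

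Two points to tighten. First, the spatial cutoff must be at a scale independent of $R$: your first bound $R^{-1}\cdot R^2M_1$ does not vanish, although your fallback of removing the cutoff first with $R$ fixed does. Second, as the paper does, you should identify the $C([0,T];L^2)$ representative of $E$ at $t=0$ with the Poisson field of $\rho_0$ (using $f(t)\to f_0$ in $L^1$), so that $\Pcal(0)$ is really the initial potential energy.
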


\begin{proof}
\emph{Step 1: integrability of the power density.}
Put
\[
 m_2(t,x)=\int_{\R^3}|v|^2f(t,x,v)\dd v,
 \qquad
 J(t,x)=\int_{\R^3}|v|f(t,x,v)\dd v.
\]
Taking $(k,\ell)=(2,1)$ in
\eqref{eq:integrated-kinetic-interpolation}, and using
\eqref{eq:global-moment-interpolation}, we obtain
\begin{equation}\label{eq:J-bounds-energy}
 J\in L^\infty(0,T;L^1(\Om)\cap L^{5/4}(\Om)),
 \qquad
 j\in L^\infty(0,T;L^{6/5}(\Om)).
\end{equation}
The last assertion follows by interpolation between $L^1$ and $L^{5/4}$.
Taking $(k,\ell)=(2,0)$ in
\eqref{eq:integrated-kinetic-interpolation} gives
$\rho\in L^\infty_t(L^1\cap L^{5/3})$, hence
$E\in L^\infty_tL^2_x$.  By \eqref{eq:energy-criterion-rho} and
\cref{lem:poisson},
\begin{equation}\label{eq:E-H1-energy}
 E\in L^2(0,T;H^1(\Om))
 \hookrightarrow L^2(0,T;L^6(\Om)).
\end{equation}
Therefore
\[
 E\cdot j\in L^1((0,T)\times\Om).
\]

\emph{Step 2: a truncated kinetic-energy identity.}
Let $\eta\in C_c^\infty((0,T))$. Choose $\chi\in C_c^\infty([0,\infty))$ such that
\[
 0\leq\chi\leq1,
 \qquad \chi=1\text{ on }[0,1],
 \qquad \chi=0\text{ on }[2,\infty),
 \qquad \chi'\leq0,
\]
and define
\begin{equation*}
 \beta_R(v)=\frac{|v|^2}{2}\chi\left(\frac{|v|}{R}\right).
\end{equation*}
For fixed $v$, $\beta_R(v)$ increases to $|v|^2/2$ as $R\to\infty$, and
\begin{equation}\label{eq:beta-R-properties}
 |\nabla_v\beta_R(v)|\leq C|v|,
 \qquad
 |\nabla_v\beta_R(v)-v|
 \leq C|v|\one_{\{|v|>R\}}.
\end{equation}
Set
\begin{equation*}
 \Kcal_R(t)=\iint\beta_R(v)f(t,x,v)\dd x\dd v,
 \qquad
 j_R(t,x)=\int\nabla_v\beta_R(v)f(t,x,v)\dd v.
\end{equation*}

In the periodic case, use the test function $\eta(t)\beta_R(v)$ directly in
\eqref{eq:weak-form}.  In the Euclidean case, multiply it by a spatial cutoff
$\zeta_L$, equal to one on $B_L$, supported in $B_{2L}$, and satisfying
$|\nabla\zeta_L|\leq C/L$.  For fixed $R$, the spatial boundary term is bounded by
\[
\begin{aligned}
 \frac{C}{L}
 \iint_{\Om\times\R^3}|v|\beta_R(v)f(t,x,v)\,\dd x\,\dd v
 &\leq
 \frac{CR}{L}M_2(t).
\end{aligned}
\]
After integration in time, it is bounded by
\[
 \frac{CR}{L}\|\eta\|_{L^1(0,T)}
 \operatorname*{ess\,sup}_{0<t<T}M_2(t),
\]
and therefore vanishes as $L\to\infty$.  The force term is dominated by \(C|E|J\), which belongs to
\(L^1((0,T)\times\Om)\)  by \eqref{eq:J-bounds-energy} and
\eqref{eq:E-H1-energy}.  We obtain
\begin{equation*}
 -\int_0^T\Kcal_R(t)\eta'(t)\dd t
 =\int_0^T\eta(t)\int_{\Om}E(t,x)\cdot j_R(t,x)\dd x\dd t.
\end{equation*}
Because $\beta_R$ is bounded and $f\in C_tL^1$, $\Kcal_R$ is continuous.  The
right-hand side belongs to $L^1(0,T)$, so
\begin{equation}\label{eq:KR-balance-energy}
 \Kcal_R(t)-\Kcal_R(s)
 =\int_s^t\int_{\Om}E(\tau,x)\cdot j_R(\tau,x)\dd x\dd\tau
\end{equation}
for every $0\leq s\leq t\leq T$.

\emph{Step 3: removal of the velocity truncation.}
Define
\[
 q_R(t,x)=\int_{|v|>R}|v|f(t,x,v)\dd v.
\]
By \eqref{eq:beta-R-properties}, $|j_R-j|\leq Cq_R$.  The second moment gives
\begin{equation*}
 \|q_R(t)\|_1
 \leq R^{-1}M_2(t)\leq CR^{-1}
\end{equation*}
for almost every $t$.  Since $q_R\leq J$, it is uniformly bounded in
$L^{5/4}$.  Interpolation between $L^1$ and $L^{5/4}$ gives
\begin{equation*}
 \|q_R\|_{L^\infty(0,T;L^{6/5})}\leq CR^{-1/6},
\end{equation*}
and therefore
\begin{equation*}
 j_R\longrightarrow j
 \quad\text{strongly in }L^\infty(0,T;L^{6/5}(\Om)).
\end{equation*}
Since $E\in L^1(0,T;L^6)$,
\[
 \sup_{0\leq t\leq T}
 \left|\int_0^t\int_{\Om}E\cdot(j_R-j)\dd x\dd\tau\right|
 \longrightarrow0.
\]
By monotone convergence,
\[
 \mathcal K_R(\tau)\uparrow
 \frac12\iint_{\Om\times\R^3}|v|^2
 f(\tau,x,v)\,\dd x\,\dd v
 =:\mathcal K(\tau)
\]
for every \(\tau\in[0,T]\), with the limit initially allowed to be
infinite. Taking \(s=0\) in \eqref{eq:KR-balance-energy},
using \(\mathcal K_R(0)\uparrow\mathcal K(0)<\infty\), and passing to
the limit in the right-hand side, we obtain
\[
 \mathcal K(t)
 =\mathcal K(0)
 +\int_0^t\int_\Om E\cdot j\,\dd x\,\dd\tau<\infty.
\]
This gives the absolutely continuous representative of $\Kcal$.
Subtracting the same identity at times $t$ and $s$ proves
\eqref{eq:kinetic-balance}.

\emph{Step 4: the potential-energy identity.}
The continuity equation in \cref{prop:macroscopic-balances}
implies \eqref{eq:E-time-weak}. Hence, by \eqref{eq:J-bounds-energy},
\begin{equation*}
 \partial_tE=-\lamOm\gamma Q_{\Om}j
 \in L^2(0,T;H^{-1}(\Om)).
\end{equation*}
Together with \eqref{eq:E-H1-energy}, the Lions--Magenes lemma
\cite{LionsMagenes1972} gives
\begin{equation*}
 E\in C([0,T];L^2(\Om)),
 \qquad
 \frac12\frac{\dd}{\dd t}\|E(t)\|_2^2
 =\langle\partial_tE,E\rangle_{H^{-1},H^1}.
\end{equation*}
The Calder\'on--Zygmund projection $Q_{\Om}$ is symmetric with respect to
the $L^{6/5}$--$L^6$ dual pairing, so that
$\langle Q_{\Om}j,E\rangle=\langle j,Q_{\Om}E\rangle$. Moreover,
$Q_{\Om}E=E$, because $E$ is a gradient field
(and has zero mean on the torus).  Hence, using $\gamma^2=1$,
\begin{align}
 \frac{\dd}{\dd t}\Pcal(t)
 &=\frac{\gamma}{\lamOm}\langle\partial_tE,E\rangle\notag\\
 &=-\langle Q_{\Om}j,E\rangle
 =-\int_{\Om}E(t,x)\cdot j(t,x)\dd x.
 \label{eq:P-derivative-energy}
\end{align}
It remains to identify the continuous value at the initial time. Since
$f(t)\to f_0$ in $L^1$, one has $\rho(t)\to\rho_0$ in distributions and,
in the periodic case, $\bar\rho(t)=\bar\rho_0$. Passing to the limit in
\[
 \diver E(t)=\lamOm\gamma\rhos(t),
 \qquad
 \nabla\times E(t)=0,
\]
and using $E(t)\to E(0)$ strongly in $L^2$, we conclude that $E(0)$ is
precisely the Poisson field generated by $\rho_0$, with the normalization
specified after \eqref{eq:VP-system}. Integrating
\eqref{eq:P-derivative-energy} proves \eqref{eq:potential-balance}. Adding
the two balances proves conservation of $\Ecal$.
\end{proof}

\subsection{The universal energy identity}

We now remove the assumption $f\in C([0,T];L^1)$ from
\cref{prop:energy-criterion}. The initial term in the weak formulation
provides the correct trace for every bounded velocity truncation.

\begin{theorem}[Universal energy identity]
\label{thm:energy-identity-universal}
Let $\Om\in\{\R^3,\T^3\}$ and $\gamma\in\{-1,1\}$. Let $(f,E)$ be a
nonnegative weak solution on $[0,T]$ with $f\in\Xcal_T(\Om)$, and assume
\[
 M_2(0)=\iint_{\Om\times\R^3}|v|^2f_0(x,v)\dd x\dd v<\infty.
\]
Then $\Kcal$ and $\Pcal$ in \eqref{eq:energy-def} admit absolutely continuous
representatives on $[0,T]$, and for every $0\leq s\leq t\leq T$,
\begin{align}
 \Kcal(t)-\Kcal(s)
 &=\int_s^t\int_{\Om}E(\tau,x)\cdot j(\tau,x)\dd x\dd\tau,
 \label{eq:kinetic-balance-universal}\\
 \Pcal(t)-\Pcal(s)
 &=-\int_s^t\int_{\Om}E(\tau,x)\cdot j(\tau,x)\dd x\dd\tau.
 \label{eq:potential-balance-universal}
\end{align}
Consequently, $\Ecal(t)=\Ecal(s)$. No renormalization or approximation
hypothesis is used.
\end{theorem}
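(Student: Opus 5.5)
The plan is to rerun the proof of \cref{prop:energy-criterion}, replacing the strong time continuity $f\in C([0,T];L^1)$ by information drawn directly from the weak formulation \eqref{eq:weak-form}. The hypothesis \eqref{eq:energy-criterion-rho} is no longer assumed. It is supplied by \cref{thm:critical-density-universal}, which gives $\rho\in L^2((0,T)\times\Om)$ and $E\in L^2(0,T;H^1)\hookrightarrow L^2(0,T;L^6)$ for every $f\in\Xcal_T(\Om)$. Step 1 of that proof then goes through verbatim: $j\in L^\infty(0,T;L^{6/5})$, $J\in L^\infty(0,T;L^1\cap L^{5/4})$, and hence $E\cdot j$ and $|E|J$ lie in $L^1((0,T)\times\Om)$.

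For the kinetic part, I will test \eqref{eq:weak-form} with $\varphi=\eta(t)\beta_R(v)\zeta_L(x)$, where now $\eta\in C_c^\infty([0,T))$ is allowed to be nonzero at $t=0$. In the Euclidean case the spatial cutoff is removed exactly as before. The result is
\[
 -\int_0^T\Kcal_R\eta'\dd t-\eta(0)\Kcal_R(0)=\int_0^T\eta\int_\Om E\cdot j_R\dd x\dd t,
\]
where $\Kcal_R(0)=\iint\beta_Rf_0$. This identity says that the distributional derivative of $\Kcal_R\in L^\infty(0,T)$ is the $L^1$ function $\int E\cdot j_R$. Hence $\Kcal_R$ has an absolutely continuous representative, and the boundary term identifies its value at $0$ with $\iint\beta_Rf_0$, so that
\[
 \Kcal_R(t)=\Kcal_R(0)+\int_0^t\!\int E\cdot j_R
\]
for a.e.\ $t$, with equality for all $t$ in the representative. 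Step 3 is repeated: $j_R\to j$ in $L^\infty(0,T;L^{6/5})$ and $E\in L^1(0,T;L^6)$, so the right-hand sides converge uniformly in $t$. On the left, $\Kcal_R(t)\uparrow\Kcal(t)$ for a.e.\ $t$ by monotone convergence. The limit identity therefore holds a.e., its right side is continuous, and it defines the absolutely continuous representative of $\Kcal$ and yields \eqref{eq:kinetic-balance-universal}. The one subtle point is that monotone convergence is only applied on the full-measure set of times where all the representatives $\Kcal_R$, taken for countably many $R$, agree with the time slices. I will fix this null set once, using a countable sequence $R\to\infty$.

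For the potential part, \cref{prop:macroscopic-balances} gives \eqref{eq:E-time-weak}, so $\partial_tE\in L^2(0,T;H^{-1})$ while $E\in L^2(0,T;H^1)$. The Lions--Magenes lemma then gives $E\in C([0,T];L^2)$ and the derivative formula \eqref{eq:P-derivative-energy}, with the same symmetry argument for $Q_\Om$. The main obstacle is identifying the continuous initial value $E(0^+)$ with the Poisson field of $\rho_0$, since $f(t)\to f_0$ in $L^1$ is no longer available. I will use instead $\rho\in C([0,T];\mathcal D'(\Om))$ with $\rho(0)=\rho_0$ from \cref{prop:macroscopic-balances}, together with conservation of mass, which gives $\bar\rho(t)=\bar\rho_0$ on the torus. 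Passing to the limit $t\to0$ in $\diver E(t)=\lamOm\gamma\rhos(t)$ and $\nabla\times E(t)=0$ in distributions, with the normalization preserved under $L^2$ limits (mean zero on $\T^3$, $L^2$ gradient fields on $\R^3$), identifies $E(0^+)$. Integrating gives \eqref{eq:potential-balance-universal}, and adding the two balances gives $\Ecal(t)=\Ecal(s)$.
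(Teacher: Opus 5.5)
Your proposal is correct and follows the paper's proof essentially step for step. \cref{thm:critical-density-universal} supplies $E\in L^2(0,T;L^6(\Om))$ in place of the assumption $\rho\in L^2_{t,x}$, and testing \eqref{eq:weak-form} with $\eta\in C_c^\infty([0,T))$ gives the initial trace $\Kcal_R(0)=\iint\beta_Rf_0$ without strong time continuity; the velocity cutoff is then removed along a countable sequence $R_n\to\infty$ using $j_R\to j$ in $L^\infty(0,T;L^{6/5}(\Om))$, and $E(0)$ is identified through $\rho\in C([0,T];\mathcal D'(\Om))$ with $\rho(0)=\rho_0$ and mass conservation, with the limit $t\to0$ taken along times $t_n\downarrow0$ at which the Poisson relation holds, as the paper does.
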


\begin{proof}
\emph{Step 1: integrability of the power density.}
Let
\[
 J(t,x)=\int_{\R^3}|v|f(t,x,v)\dd v.
\]
By \cref{lem:density-current,prop:macroscopic-balances},
\[
 J\in L^\infty(0,T;L^1(\Om)\cap L^{5/4}(\Om)),
 \qquad
 j\in L^\infty(0,T;L^{6/5}(\Om)).
\]
By \cref{thm:critical-density-universal},
$E\in L^2(0,T;L^6(\Om))$. Therefore
\begin{equation}\label{eq:power-universal}
 E\cdot j\in L^1((0,T)\times\Om).
\end{equation}

\emph{Step 2: truncated kinetic energy and its initial trace.}
Choose $\chi\in C_c^\infty([0,\infty))$ such that
\[
 0\leq\chi\leq1,
 \qquad \chi=1\text{ on }[0,1],
 \qquad \chi=0\text{ on }[2,\infty),
 \qquad \chi'\leq0,
\]
and define
\[
 \beta_R(v)=\frac{|v|^2}{2}\chi\left(\frac{|v|}{R}\right),
 \qquad
 \Kcal_R(t)=\iint\beta_R(v)f(t,x,v)\dd x\dd v,
\]
\[
 j_R(t,x)=\int_{\R^3}\nabla_v\beta_R(v)f(t,x,v)\dd v,
 \qquad
 \Kcal_R^0=\iint\beta_R(v)f_0(x,v)\dd x\dd v.
\]
Then $\beta_R\uparrow|v|^2/2$ pointwise and
\begin{equation}\label{eq:beta-R-universal}
 |\nabla_v\beta_R(v)|\leq C|v|,
 \qquad
 |\nabla_v\beta_R(v)-v|
 \leq C|v|\one_{\{|v|>R\}}.
\end{equation}

On $\T^3$, use $\eta(t)\beta_R(v)$ directly in \eqref{eq:weak-form}. On
$\R^3$, multiply it by a spatial cutoff $\zeta_L$, equal to one on $B_L$,
supported in $B_{2L}$, and satisfying $|\nabla\zeta_L|\leq C/L$. Since
$\beta_R$ is supported in $\{|v|\leq2R\}$,
\[
 \frac1L\int_0^T\iint |v|\beta_R(v)f\dd x\dd v\dd t
 \leq \frac{CRT}{L}
 \operatorname*{ess\,sup}_{0<t<T}\iint |v|^2f(t)\dd x\dd v,
\]
so the spatial boundary term vanishes as $L\to\infty$. We obtain, for every
$\eta\in C_c^\infty([0,T))$,
\begin{equation}\label{eq:KR-distribution-universal}
 -\int_0^T\Kcal_R(t)\eta'(t)\dd t
 =\Kcal_R^0\eta(0)
 +\int_0^T\eta(t)\int_{\Om}E(t,x)\cdot j_R(t,x)\dd x\dd t.
\end{equation}
Indeed, $|j_R|\leq CJ$, and interpolation gives
$J\in L^\infty_tL^{6/5}_x$, so the right-hand side is integrable. The
one-dimensional distributional fundamental theorem therefore gives an
absolutely continuous representative satisfying
\begin{equation}\label{eq:KR-balance-universal}
 \Kcal_R(t)=\Kcal_R^0
 +\int_0^t\int_{\Om}E(\tau,x)\cdot j_R(\tau,x)\dd x\dd\tau,
 \qquad 0\leq t\leq T.
\end{equation}
This step uses the initial term in the weak formulation and does not assume
$f\in C([0,T];L^1)$.

\emph{Step 3: removal of the velocity cutoff.}
By \eqref{eq:beta-R-universal},
\[
 |j_R-j|\leq Cq_R.
\]
The second moment gives
\[
 \|q_R\|_{L^\infty(0,T;L^1(\Om))}
 \leq R^{-1}\operatorname*{ess\,sup}_{0<t<T}M_2(t),
\]
and $q_R\leq J$ is uniformly bounded in
$L^\infty(0,T;L^{5/4}(\Om))$. Interpolation therefore yields
\[
 \|q_R\|_{L^\infty(0,T;L^{6/5}(\Om))}
 \leq C_{\|f\|_{\Xcal_T(\Om)}}R^{-1/6}.
\]
Consequently,
\begin{equation}\label{eq:jR-universal-convergence}
 j_R\longrightarrow j
 \quad\text{strongly in }L^\infty(0,T;L^{6/5}(\Om)).
\end{equation}
Since $E\in L^1(0,T;L^6(\Om))$, the integrals on the right-hand side of
\eqref{eq:KR-balance-universal} converge uniformly with respect to the upper
time endpoint. Moreover, $\Kcal_R^0\uparrow\Kcal(0)$. Thus the absolutely
continuous representatives converge uniformly on $[0,T]$ to
\begin{equation}\label{eq:K-sharp}
 \Kcal^\sharp(t)
 :=\Kcal(0)+\int_0^t\int_{\Om}E(\tau,x)\cdot j(\tau,x)\dd x\dd\tau.
\end{equation}
Choose an increasing sequence $R_n\to\infty$. For almost every $t$, all
phase-space integrals defining $\Kcal_{R_n}(t)$ agree with their absolutely
continuous representatives, and monotone convergence gives
\[
 \Kcal^\sharp(t)
 =\frac12\iint_{\Om\times\R^3}|v|^2f(t,x,v)\dd x\dd v.
\]
We take $\Kcal^\sharp$ as the canonical representative of the kinetic
energy. Subtracting \eqref{eq:K-sharp} at $s$ and $t$ proves
\eqref{eq:kinetic-balance-universal}.

\emph{Step 4: potential energy.}
By \eqref{eq:E-time-weak} and the current bound,
\[
 \partial_tE=-\lamOm\gamma Q_{\Om}j
 \in L^2(0,T;H^{-1}(\Om)).
\]
Together with \eqref{eq:E-critical-universal}, the Lions--Magenes lemma
\cite{LionsMagenes1972} gives
\[
 E\in C([0,T];L^2(\Om)),
 \qquad
 \frac12\frac{\dd}{\dd t}\|E(t)\|_2^2
 =\langle\partial_tE,E\rangle_{H^{-1},H^1}
\]
for almost every $t$. The projection $Q_{\Om}$ is symmetric in the
$L^{6/5}$--$L^6$ duality and $Q_{\Om}E=E$. Hence
\begin{equation}\label{eq:P-derivative-universal}
 \frac{\dd}{\dd t}\Pcal(t)
 =\frac{\gamma}{\lamOm}\langle\partial_tE,E\rangle
 =-\int_{\Om}E(t,x)\cdot j(t,x)\dd x.
\end{equation}

It remains to identify the value at $t=0$. By
\cref{prop:macroscopic-balances},
$\rho\in C([0,T];\mathcal D'(\Om))$ and $\rho(0)=\rho_0$. Choose Lebesgue
times $t_n\downarrow0$ at which the Poisson equations hold. Since
$E(t_n)\to E(0)$ strongly in $L^2$ and $\rho(t_n)\to\rho_0$ in
distributions, $E(0)$ is precisely the normalized Poisson field generated
by $\rho_0$. Integrating \eqref{eq:P-derivative-universal} proves
\eqref{eq:potential-balance-universal}. Adding the two balances proves
conservation of $\Ecal$.
\end{proof}

\subsection{Proofs of the main results}

\begin{proof}[Proof of \cref{thm:energy-universal}]
Fix $T>0$. \Cref{prop:macroscopic-balances} gives the macroscopic balance
laws, \cref{thm:critical-density-universal} gives the critical density and
field regularity, and \cref{thm:energy-identity-universal} gives the two
energy balances. The representatives constructed on different finite
intervals coincide on their overlaps, since they are given by the same
integral identities with initial time $0$. Since $T$ is arbitrary,
\eqref{eq:energy-main-conservation} holds globally.
\end{proof}

\begin{proof}[Proof of \cref{cor:energy-existence}]
\Cref{prop:compactness} supplies a global nonnegative renormalized weak
solution, obtained as a limit of classical solutions, with strong time
continuity, an $L^\infty$ bound, and a uniformly bounded second moment on
every finite time interval. It therefore satisfies the assumptions of
\cref{thm:energy-universal}, which proves the conclusion.
\end{proof}

\begin{proof}[Completion of the proof of \cref{thm:moment-main}]
The moment estimate was proved in \cref{sec:moments}. Since $k>2$ implies
$M_2(f_0)\leq M+M_k(f_0)$, the constructed solution lies in the
finite-energy framework of \cref{prop:compactness}. The regularity stated in
\cref{rem:common-regularity} follows from \cref{prop:compactness},
\cref{thm:critical-density-approx}, and \cref{thm:energy-universal}; the
latter also shows that this moment-propagating solution conserves the total
energy.
\end{proof}

\begin{remark}[What is critical at $k=2$]\label{rem:critical-final}
The density estimate supplies precisely
$E\in L^2_tL^6_x$, while the second moment supplies precisely
$j\in L^\infty_tL^{6/5}_x$.  Thus the power density is integrable at the
dual Sobolev exponents.  The kinetic truncation needs to remove only the
first-order tail $\int_{|v|>R}|v|f\dd v$, for which the second moment gives
the quantitative $R^{-1/6}$ decay in $L^{6/5}_x$.  No moment strictly above
order two enters the energy proof.
\end{remark}

\paragraph{Data/Code Availability.}
The manuscript has no associated data or code.

\paragraph{Conflict of interest.}
The author declares that he has no conflict of interest.

\end{document}